\newtheorem{theorem}{Theorem}[section]
\newtheorem{corollary}[theorem]{Corollary}
\newtheorem{lemma}[theorem]{Lemma}
\newtheorem{proposition}[theorem]{Proposition}
\theoremstyle{definition}
\newtheorem{definition}[theorem]{Definition}
\newtheorem{remark}[theorem]{Remark}
\newtheorem{example}[theorem]{Example}
\theoremstyle{remark}
\newtheorem{step}{Step}
\numberwithin{equation}{section}
\newcommand{\eps}{\varepsilon}
\newcommand{\calG}{\mathcal{G}}
\newcommand{\calD}{\mathcal{D}}
\newcommand{\calV}{\mathcal{V}}
\newcommand{\calX}{\mathcal{X}}
\newcommand{\calY}{\mathcal{Y}}
\newcommand{\calZ}{\mathcal{Z}}
\newcommand{\R}{\mathds{R}}
\newcommand{\bG}{\mathbf{G}}
\newcommand{\bM}{\mathbf{M}}
\newcommand{\bS}{\mathbf{S}}
\newcommand{\bX}{\mathbf{X}}
\newcommand{\bZ}{\mathbf{Z}}
\newcommand{\Rt}{{\R^2}}
\newcommand{\prt}{\partial}
\newcommand{\wh}{\widehat}
\newcommand{\wt}{\widetilde}
\newcommand{\rpar}[1]{\left( #1 \right)}
\newcommand{\kpar}[1]{\left\{ #1 \right\}}
\newcommand{\spar}[1]{\left[ #1 \right]}
\DeclareMathOperator{\dist}{dist}
\def\bx{{\bf x}}
\newcommand\Tstrut{\rule{0pt}{2.6ex}}       
\newcommand\Bstrut{\rule[-1.2ex]{0pt}{0pt}} 
\title[Number of elastic collisions]{A lower bound for the number of elastic collisions}
\author{Krzysztof Burdzy and Mauricio Duarte}
\address{KB: Department of Mathematics, Box 354350, University of Washington, Seattle, WA 98195}
\email{burdzy@uw.edu}
\address{MD: Departamento de Matematicas, Universidad Andres Bello. Republica 498, Santiago, Chile.} 
\email{mauricio.duarte@unab.cl}
\thanks{KB's research was supported in part by Simons Foundation Grant 506732. MD was supported by Proyecto FONDECYT 11160591,  N\'ucleo Milenio NC130062, and Basal CONICYT Program PFB 03.}
\begin{document}

\begin{abstract}
We prove by example that the  number of elastic collisions of $n$ balls
of equal mass and equal size
in $d$-dimensional space can be greater than $n^3/27$ for $n\geq 3$ and $d\geq 2$. The previously known lower bound was of order $n^2$.
\end{abstract}

\maketitle

\section{Introduction}\label{intro}

Let $K(n,d)$ be the supremum of the number of elastic collisions of $n$ balls  of equal radii and masses in $d$-dimensional space; the supremum is taken over all initial conditions (positions and velocities).  
The rigorous definition of a ``collision'' will be given in Section \ref{prelim}; see especially Remark \ref{m19.1}. Here we limit ourselves to the informal remark that, in this paper, we consider only collisions involving pairs of balls in which both velocities change by a non-zero amount. Our examples do not involve simultaneous collisions (see Remarks \ref{re:ja2.1b} and \ref{oc20.3}).

It is obvious that $K(2,d)=1$ for all $d\geq 1$. It was proved in \cite{Vaser79} that $K(n,d)< \infty$ for all $n$ and $d$.

For $n\geq 3$, let $n_1 = \lfloor n/3 \rfloor$, $n_2 = n - 2 n_1$, and
\begin{align}\label{d31.1}
f(n)=n_1(n_1 +1) n_2 + n_2(n_2-1)/2 + n_1(n_1-1).
\end{align}
The following is our main result.

\begin{theorem}\label{oc20.1}
For all $n\geq 3$ and $d\geq 2$, 
\begin{align}\label{oc15.1}
K(n,d) \geq f(n).
\end{align}
\end{theorem}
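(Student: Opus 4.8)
The plan is to construct an explicit initial configuration of $n$ balls whose evolution produces exactly $f(n)$ collisions, since the theorem is a lower bound and $K(n,d)$ is a supremum over initial conditions. Because the claimed bound is asymptotically $n^3/27$, I expect the construction to partition the $n$ balls into three groups of roughly $n/3$ each, which is strongly suggested by the choice $n_1=\lfloor n/3\rfloor$ and by the three summands in \eqref{d31.1}. My reading of $f(n)$ is that it counts collisions of three types: interactions between two distinct groups (the $n_1(n_1+1)n_2$ term, a product of group sizes), and interactions within a single group (the $n_2(n_2-1)/2$ and $n_1(n_1-1)$ terms, which are ``$\binom{k}{2}$-type'' counts). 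So the first step is to design the three groups so that each ball in one group collides with each ball in another group, while carefully arranging intra-group collisions. Since we work in $d\geq 2$ and may use any $d$, I would place the entire construction in a $2$-dimensional plane.

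\textbf{The geometric idea.} For equal masses and equal radii, a central elastic collision in any dimension exchanges the components of the two velocities along the line joining the centers, leaving the perpendicular components untouched. I would exploit this by arranging one group as a tightly clustered ``bunch'' of slow or stationary balls and sending the remaining balls through them like projectiles along nearly parallel lines, so that a single fast ball can be made to strike every ball of a target group in succession. The momentum-transfer rule lets a projectile deposit its forward momentum into a struck ball and pick up that ball's (near-zero) momentum, effectively relaying motion through the cluster; by tuning the geometry one can force each of the $n_2$ projectiles to hit each of the $n_1$ balls in a group, producing products of group sizes. The intra-group $\binom{n_2}{2}$ and $\binom{n_1}{2}$ terms should come from a secondary phase in which the balls of a single group, now set in motion, collide pairwise among themselves.

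\textbf{The order of the argument.} First I would fix the number-theoretic bookkeeping: record $n_1=\lfloor n/3\rfloor$, $n_2=n-2n_1\in\{n_1,n_1+1,n_1+2\}$, and verify that $f(n)$ really equals the total collision count the construction will produce. Second, I would give the explicit positions and velocities of the three groups, working in $\R^2$ and embedding into $\R^d$ trivially. Third, I would verify that the collisions occur in the intended temporal order and that no unintended or simultaneous collisions happen — by Remarks \ref{re:ja2.1b} and \ref{oc20.3} simultaneous collisions must be excluded, so the configuration must be chosen in \emph{general position} with respect to the collision times. Fourth, I would tally the collisions group-by-group and match the total to $f(n)$.

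\textbf{The main obstacle.} The hard part will be the third step: guaranteeing that the idealized ``relay'' of momentum through a cluster actually realizes every intended pairwise collision and, crucially, that it avoids the combinatorial pitfalls of unwanted collisions and coincidences of collision times. After each collision the velocities change, so the later geometry is not freely prescribable — it is forced by the elastic-collision rule — and one must check that the perturbed trajectories still meet the balls they are supposed to meet, with correct signs of relative velocity (so that momenta actually change by a nonzero amount, as required by the definition of ``collision'' in Section \ref{prelim}). I would expect to introduce a small parameter $\eps$ controlling the transverse spacing and stagger of the groups, argue that for the exact aligned ($\eps=0$) configuration the combinatorics of collisions is as desired, and then use a continuity/openness argument to perturb to $\eps>0$ so that all collisions remain, become strictly ordered in time, and remain binary. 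Making this perturbation argument rigorous — controlling how collision times and post-collision velocities depend continuously on $\eps$, and ruling out that a desired collision is ``lost'' or that two become simultaneous — is where the real work lies.
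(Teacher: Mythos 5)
There is a genuine gap, and it occurs at the heart of the argument: your proposed mechanism does not produce a cubic number of collisions. You read the term $n_1(n_1+1)n_2$ in \eqref{d31.1} as a ``product of group sizes'' realized by having each of the $n_2$ projectiles strike each of the $n_1$ balls of a target group; that yields only $n_1n_2\approx n^2/9$ collisions, and together with the two $\binom{k}{2}$-type terms your tally is $O(n^2)$, no better than the one-dimensional bound \eqref{ja14.1}. The actual construction needs a ball that is struck $\Theta(n)$ times and whose every strike triggers a cascade of $\Theta(n)$ further collisions, all repeated $\Theta(n)$ times. In the paper this is achieved by two ``arms'' of $n_1$ nearly touching discs ($B_1,\dots,B_{n_1}$ and $C_1,\dots,C_{n_1}$) meeting at a single disc $A_1$ at an angle of $120^\circ$: each arriving projectile $A_m$ transfers its momentum to $A_1$, which then ricochets between $B_1$ and $C_1$ about $n_1$ times (its speed decaying geometrically, see \eqref{ja3.2}--\eqref{ja3.1}), and the $k$-th impact on an arm launches a Newton's-cradle pulse that travels $n_1-k+1$ collisions down that arm. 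This gives $n_1(n_1+1)$ collisions per projectile and hence the cubic term. Your proposal contains no analogue of this repeated-ricochet-plus-cascade structure, so even granting all the perturbation claims, the configuration you describe cannot reach $f(n)$.

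The second, related, gap is in your plan for step three. You propose to analyze the exact aligned $\eps=0$ configuration and then invoke a continuity/openness argument to perturb to $\eps>0$. But the $\eps=0$ configuration of any such cluster is precisely a simultaneous-collision configuration, and Remark \ref{oc20.3} shows that billiard trajectories are \emph{not} continuous functions of initial conditions there --- arbitrarily small perturbations of a triple contact lead to macroscopically different outgoing velocities. So the tool you reach for is exactly the tool that is unavailable, and Remark \ref{re:ja2.1b} (which you cite) only covers evolutions with no simultaneous collisions. This is why the paper cannot simply perturb: it must instead rescale the gaps by $\eps^{-1}$, pass to a limiting ``pinned billiard balls'' evolution $\bZ$ defined by explicit crossing rules, and prove Skorohod convergence of the rescaled velocity processes to $D\bZ$ (Proposition \ref{pr:s1}), together with a lemma matching each discontinuity of $D\bZ$ to a unique collision of the $\eps$-system. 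Some form of this limiting analysis (or an equally careful direct induction on the collision sequence with quantitative control of the decaying speeds) is unavoidable, and your proposal does not identify it.
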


 It is elementary to check that
\begin{align}\label{oc15.2}
&\lim_{n\to \infty} \frac{f(n) }{ n^3/27} = 1,\\
& f(n) > n^3/27 \qquad \text {  for  } n \geq 3,\label{ja2.1a}\\
& f(n) > n(n-1)/2 \qquad \text {  for  } n \geq 7,\label{oc15.3}\\
& f(n) = n(n-1)/2 \qquad \text {  for  } n =6,\label{oc15.4}\\
& f(n) < n(n-1)/2 \qquad \text {  for  } 3\leq n \leq 5.\label{oc15.44}
\end{align}

A well known elementary argument, recalled in Example \ref{oc18.2} below,
shows that   $K(n,1) =n(n-1)/2$ 
for $n\geq 2$. It is obvious that $K(n,d_1) \geq K(n,d_2)$ for all $n$ and $d_1\geq d_2$. Hence, 
\begin{align}\label{ja14.1}
K(n,d) \geq n(n-1)/2 \qquad \text{  for all } n\geq 2,\ d\geq 1.
\end{align}
Intuition may suggest that the bound in \eqref{ja14.1} is sharp because the balls are ``most constrained'' in one dimension; see \cite{MurCoh} for a historical review related to this point. 
It turns out that this intuition is wrong. It is known that $K(3,2) = 4 > 3(3-1)/2$. An example showing that $K(3,2) \geq 4$ was found by J.D.~Foch and published in \cite{MurCoh}. The proof that $K(3,2) < 5$
was given in \cite{MC93}.
We are not aware of any values of $n\geq 4$ and $d\geq 2$ for which it is already known that $K(n,d) >n(n-1)/2$.
Hence, in view of \eqref{oc15.3}, our lower bound given in \eqref{oc15.1} is the first result of this type. Because of \eqref{oc15.4}-\eqref{oc15.44}, Theorem \ref{oc20.1} leaves the intriguing possibility that the bound in \eqref{ja14.1} is sharp for some $4 \leq n \leq 6$ and  $d\geq 2$. To settle this question, we will prove the following result.

\begin{theorem}\label{n6.1}
For all $4 \leq n \leq 6$
and $d\geq 2$,
\begin{align*}
K(n,d) \geq K(n,2) \geq 1+ n(n-1)/2.
\end{align*}
\end{theorem}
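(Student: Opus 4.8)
The plan is as follows. The inequality $K(n,d)\ge K(n,2)$ for $d\ge 2$ is immediate from the elementary monotonicity $K(n,d_1)\ge K(n,d_2)$ for $d_1\ge d_2$, so the whole task reduces to proving $K(n,2)\ge 1+n(n-1)/2$ for each $n\in\{4,5,6\}$; that is, to exhibiting a planar initial configuration of $n$ equal balls producing at least $1+n(n-1)/2$ collisions. I would combine the classical $3$-ball, $4$-collision example of Foch, which gives $K(3,2)\ge 4 = 1+3(3-1)/2$ (see \cite{MurCoh}), with $n-3$ further balls arranged so that every pair of balls not contained in the Foch trio collides exactly once.

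I organize the collisions as follows. Write $\{1,\dots,n\}=T\cup S$ with $T=\{1,2,3\}$ the Foch trio and $S$ the remaining $n-3$ balls. If the trio realizes its four internal collisions, every pair inside $S$ collides once, and every trio--$S$ pair collides once, then the number of collisions is
\[
4+\binom{n-3}{2}+3(n-3)=1+\Big(\binom{3}{2}+\binom{n-3}{2}+3(n-3)\Big)=1+\frac{n(n-1)}{2},
\]
since the parenthesized sum is exactly $\binom n2$. Hence it is enough to realize these collisions simultaneously in the plane.

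For the realization I keep the whole system close to one dimension. In dimension one, placing the balls in strictly decreasing order of position and strictly increasing order of velocity makes every one of the $\binom n2$ pairs collide exactly once, at pairwise distinct times. Giving the $i$-th ball a small distinct vertical offset turns each of these into a genuine planar collision in which both velocities change by a nonzero amount, and because the one-dimensional collisions are transversal and occur at distinct times this near-one-dimensional flow still produces the same single crossing for every pair; in particular each trio ball's trajectory crosses each $S$-ball's trajectory once. The extra collision is inserted locally: I replace the three mutual crossings of the trio by a Foch-type interaction, timed and placed so that the four trio collisions occur in a small space--time neighborhood of one point of the common axis, the trio entering and leaving that neighborhood along near-one-dimensional ballistic paths. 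Outside this window the three trio balls behave as ordinary near-one-dimensional balls, so each $S$-ball still crosses each of them once, while inside the window the trio collides four times rather than three, which supplies the $+1$. Equivalently one may phrase the construction as an induction that adds one ball at a time crossing each of the $m$ present balls once and hence raising the count by $m$; started from the Foch trio this already yields $1+n(n-1)/2$ for every $n\ge 3$, and the statement is restricted to $4\le n\le 6$ only because $K(3,2)=4$ is classical and Theorem \ref{oc20.1} is stronger once $n\ge 7$ by \eqref{oc15.3}.

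The main obstacle is to make the local Foch excursion coexist with the global system of crossings without creating, destroying, or synchronizing collisions. Two robustness facts drive the argument: the set of $3$-ball data yielding exactly four collisions is open, each collision being transversal and hence surviving perturbation, and each near-one-dimensional crossing of two balls is likewise a transversal, stable event. The delicate part is a finite scheduling problem: one must choose the horizontal spacings, the vertical offsets, and the instant at which the trio clusters so that (i) all prescribed $1+n(n-1)/2$ collisions actually take place, (ii) the vertical offsets create no unplanned collision, and (iii) no two collisions occur at the same instant, so that the simultaneous collisions excluded by the conventions of Remarks \ref{re:ja2.1b} and \ref{oc20.3} never arise. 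Since there are only $n\le 6$ balls and finitely many collisions, these are finitely many open conditions, and the remaining work is to verify that they are jointly satisfiable; in practice I would do this by writing down explicit coordinates and velocities for each of $n=4,5,6$ and checking the collision sequence directly.
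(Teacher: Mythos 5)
Your reduction to $d=2$, your collision count $4+\binom{n-3}{2}+3(n-3)=1+\binom{n}{2}$, and your identification of the Foch example as the source of the extra collision all match the paper. The gap is in the realization step, and it is exactly the part of the argument that carries the real content. You propose to run a global near-one-dimensional crossing pattern and to ``replace the three mutual crossings of the trio by a Foch-type interaction'' confined to a small space--time window, with the trio ``entering and leaving that neighborhood along near-one-dimensional ballistic paths.'' This cannot work as stated. First, the spatial extent of the Foch interaction cannot be shrunk: the balls have fixed radius $1$, and in the actual Foch evolution the four collisions are spread over distances large compared to the ball radius (in the paper's numerical rendering, Remark \ref{n6.2}, the red disc is roughly $800$ units from the origin at the time of the third collision). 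Second, and more seriously, the trio does not exit the interaction in a near-one-dimensional state: the three terminal velocities point in genuinely different planar directions (red nearly vertical at high speed, blue and green nearly horizontal), so the claim that ``outside this window the three trio balls behave as ordinary near-one-dimensional balls'' fails, and with it the claim that each $S$-ball still crosses each trio ball once. Your fallback inductive step --- add one ball that collides once with each of the $m$ balls already present --- is the one-dimensional half-line argument of Example \ref{oc18.2}; in two dimensions, with the trio moving in different directions after the Foch excursion, it is not justified. Appealing to openness and transversality only shows each desired collision survives small perturbations; it does not show that your finitely many open conditions have a common solution, which is the whole difficulty.

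The paper resolves precisely this difficulty by reversing the order of operations and by proving one additional geometric fact that your proposal omits: after the four Foch collisions, the three centers become \emph{collinear} at some later time (this is the point of the slope estimates in the first step of the paper's proof). The paper then places the remaining $m=n-3$ balls on that line, with velocities parallel to it, and runs a purely one-dimensional cascade in which $A_k$ hits $A_{k-1},\dots,A_1$, then $A_1$ hits $R$, $R$ hits $G$, $G$ hits $B$; this yields $\sum_{k=1}^{m}(k+2)=(m^2+5m)/2=\binom{n}{2}-3$ further collisions (note the pair structure is different from your ``each pair once'': the trio pairs collide repeatedly, and pairs such as $(A_2,R)$ never collide, but the total is the same). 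Continuity of trajectories in the initial data (Remark \ref{re:ja2.1b}) together with velocity rescaling then absorbs the trio's nonzero post-Foch velocities. If you want to repair your write-up, the missing lemma you must prove is the collinearity of the three centers at some time after the fourth collision; without it, there is no axis along which to append the remaining $n-3$ balls.
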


Our proof of Theorem \ref{n6.1} actually works for all $n\geq 4$ but it gives a result weaker than Theorem \ref{oc20.1} for $n\geq 7$.

The example of Foch published in \cite{MurCoh}, our Theorem \ref{oc20.1}, bound 
\eqref{oc15.3} and  Theorem \ref{n6.1} imply that the elementary bound \eqref{ja14.1} is never sharp in higher dimensions. More precisely, we have the following result.
\begin{corollary}
For all $ n \geq 3$ and $d\geq 2$,
\begin{align*}
K(n,d) > n(n-1)/2.
\end{align*}
\end{corollary}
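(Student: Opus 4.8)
The plan is to establish the strict inequality by partitioning the range $n \geq 3$ into three blocks and, on each block, combining the sharpest lower bound already available with the monotonicity $K(n,d) \geq K(n,2)$ for $d \geq 2$. In this way the corollary is assembled entirely from results proved earlier in the paper, with no new dynamical input required.

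For the large range $n \geq 7$, I would invoke Theorem \ref{oc20.1}, which gives $K(n,d) \geq f(n)$ for every $d \geq 2$, together with the elementary bound \eqref{oc15.3}, which asserts $f(n) > n(n-1)/2$ in exactly this range; the two combine to give $K(n,d) > n(n-1)/2$ at once. For the middle range $4 \leq n \leq 6$, Theorem \ref{oc20.1} is of no help, since \eqref{oc15.4} and \eqref{oc15.44} show that $f(n) \leq n(n-1)/2$ there. Here I would instead appeal to Theorem \ref{n6.1}, which yields $K(n,d) \geq K(n,2) \geq 1 + n(n-1)/2 > n(n-1)/2$ for each such $n$. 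Finally, for the boundary case $n = 3$, I would use Foch's example recorded in \cite{MurCoh}, namely $K(3,2) = 4 > 3 = 3(3-1)/2$, together with the monotonicity $K(3,d) \geq K(3,2)$ for $d \geq 2$.

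Since the blocks $\{3\}$, $\{4,5,6\}$, and $\{n \geq 7\}$ exhaust all integers $n \geq 3$, this completes the argument. There is no genuine analytic obstacle here; the only point requiring care is that the case split be exhaustive and that each cited bound be applied on precisely the range where it is valid. In particular, the borderline value $n = 6$ must be handled by Theorem \ref{n6.1} and not by Theorem \ref{oc20.1}, since \eqref{oc15.4} shows that the latter delivers only the non-strict equality $f(6) = 6 \cdot 5/2$ and thus cannot by itself yield a strict inequality there.
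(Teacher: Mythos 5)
Your proposal is correct and matches the paper's own (implicit) argument exactly: the sentence preceding the corollary assembles it from Foch's example for $n=3$, Theorem \ref{n6.1} for $4\leq n\leq 6$, and Theorem \ref{oc20.1} combined with \eqref{oc15.3} for $n\geq 7$, together with the monotonicity $K(n,d)\geq K(n,2)$. Your attention to the borderline case $n=6$, where \eqref{oc15.4} shows Theorem \ref{oc20.1} alone gives only equality, is precisely the point that forces the use of Theorem \ref{n6.1} there.
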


\medskip
We tried several natural ideas to improve the bound in  \eqref{oc15.1}
but none of them worked; see Remark \ref{oc18.1} for details. 

The proof of Theorem \ref{oc20.1} is partly based on the ideas behind the ``pinned billiards balls'' model, to be discussed in \cite{ABD}. In the pinned billiards balls model, touching static balls are associated with vectors (``velocities'') and vectors corresponding to  adjacent balls  change according to the rules normally applied to velocities of  colliding  moving balls. 

\medskip

The question of whether a finite system of hard balls can have an infinite number of collisions was posed by Ya.~Sinai. It was answered in negative in \cite{Vaser79}. For alternative proofs see \cite{Illner89, Illner90,IllnerChen}.  The papers  \cite{BFK5,BFK2, BFK1,BFK3, BFK4} were the first to present universal upper bounds for the number of collisions of $n$ hard balls in any dimension.
The first of these bounds for $K(n,d)$ was given in  \cite{BFK1},
namely
\begin{align}\label{s26.1}
K(n,d) \leq
\left( 32  n^{3/2}\right)^{n^2}.
\end{align}
The following alternative  bound  appeared in \cite{BFK5},
\begin{align}\label{s26.2}
K(n,d)\leq
\left( 400  
 n^2\right)^{2n^4}.
\end{align}
We stated simplified versions of the original bounds, because
the original versions allowed for unequal radii or masses. 

\section{Preliminaries }\label{prelim}

\subsection{General notation.} We will use $| \,\cdot\,|$ to denote the usual Euclidean norm. We will use the following notation: $a\land b = \min(a,b)$, $a\lor b = \max(a,b)$.

We will denote the left and right limits by ``$-$'' and ``$+$'', for example, $v_j(t-) = \lim_{s\uparrow t} v_j(s) $. We will write  $D x(t)$ to denote the right derivative of the function $x$ at time $t$. Henceforth, whenever we say ``derivative,'' we mean the ``right derivative.'' 

Given $w\in\Rt\setminus\kpar{(0,0)}$ and $z\in\Rt$, we define the ``projection'' $P_w( z) = P_w z = (z\cdot w)w/|w|^2$.

\subsection{Elastic collisions of balls.} 
\label{m5.1}
We will consider $n\geq2$ hard spheres in $\R^d$, for $d\geq 1$, colliding elastically. In some cases, the evolution will be restricted to the time interval $[0, \infty)$ and in some other cases the time interval will be $(-\infty,\infty)$.

We will assume that all balls have radii equal to 1 and identical masses.
Let us consider only two balls $X$ and $Y·$  with centers given by $x(t)$ and $y(t)$ at time $t$. We will say that balls $X$ and $Y$ collide at time $t$ if $|x(t) - y(t)| = 2$ and their velocities change at this time. The velocities are constant between collision times. The norm of the velocity will be called speed. The velocity of the center of a ball is well defined at all times except at collision times (there are finitely many collision times in our setting). Nonetheless, the right derivative of the center of a ball is well defined at all times. Thus, the velocity of ball $X$ will be denoted by $D  x$.

If only two balls collide at time $t$, then the laws of conservation of energy and momentum determine  the velocities  after the collision. 
Suppose that balls $X$ and $Y$ collide at time $t$ and no other ball touches any of these two balls at time $t$. 
This can happen only if  $D x(t-) $ and $ D y(t-)$  satisfy
\begin{align}\label{oc4.3}
(D x(t-) - D y(t-)) \cdot (x(t) - y(t)) < 0.
\end{align}
Let $\bx = x(t) - y(t)$.
Then the velocities just after the collision are given by
\begin{align}
\label{oc2.3}
D  x(t) &= D  x(t-) - P_{\bx} \rpar{ D  (x - y)(t-) } ,\\
\label{oc4.2}
D  y(t) &= D  y(t-) - P_{\bx} \rpar{ D  (y - x)(t-) } .
\end{align}
In other words, the balls exchange the components of their velocities that are parallel to the line through their centers at the moment of impact. The orthogonal components of velocities remain unchanged.

\begin{remark}\label{re:ja2.1b}

Crucially for our arguments, if only two balls are involved in a collision then their trajectories restricted to any finite time interval are continuous functions of the initial conditions (positions and velocities) in the topology of uniform convergence.  This claim follows easily from the explicit equations \eqref{oc2.3}-\eqref{oc4.2}. Moreover, the claim extends to joint continuity of any finite number of billiard balls as long as there are no simultaneous collisions, in the sense outlined below. 

We say that a ``simultaneous collision'' occurs at time $t$ if there is a collection of balls $\{B_1, B_2, \dots, B_k\}$, $k\geq3$, such that for any two balls $B_i$ and $B_j$ in the family, there exist $j_1=i, j_2, \dots,j_{m-1}, j_m=j$ such that $B_{j_r}$ is in contact with $B_{j_{r+1}}$ at time $t$ for all $r=1, \dots, m-1$. 

If a ball $B_1$ touches $B_2$ at time $t$ and balls $B_3$ and $B_4$ also touch at time $t$ but none of the balls from the first pair touches a ball from the second pair, we do not call $t$ a simultaneous collision time. This type of simultaneous occurrence of two collisions does not present any technical difficulties. From the point of view  of counting collisions, if there are no simultaneous collisions in the sense given above, one can modify the initial conditions slightly and the modified system will have the same number of collisions, none of them occurring simultaneously with any other.

Our examples will involve configurations coming very close to simultaneous collisions but there will be no simultaneous collisions. See \cite{Vaser79,IllnerChen} for the analysis of the trajectories of families of balls allowing for simultaneous collisions.

We note parenthetically that
a billiards trajectory in a polyhedral domain is a continuous function of initial conditions if and only if every angle between two faces on the boundary of the domain has the form $\pi/m$ for some integer $m\geq 1$ (see \cite[Thm.~1, p.~22]{KozTre}).

\end{remark}

\begin{remark}\label{m19.1}

Since our main results, Theorems \ref{oc20.1} and  \ref{n6.1}, give lower bounds for the numbers of collisions, we want to stress that we count only ``uncontroversial'' collisions. Specifically, we say that there is a collision at a space-time point $(y,t)$ if  and only if (i) two balls are tangent at $y$ at time $t$, (ii) for some $\eps>0$, the two balls do not touch in intervals $(t-\eps,t)$ and $(t, t+\eps)$, and (iii) both balls change velocities at time $t$. 

The following events are not counted as collisions in Theorems \ref{oc20.1} and  \ref{n6.1}. First, there are no simultaneous collisions in the sense of Remark \ref{re:ja2.1b} in the evolutions of ball families constructed in the proofs of the two theorems.

Second, we do not count ``grazing collisions,'' i.e., points in space-time when (i) two balls are tangent at $y$ at time $t$,  and (ii) the balls do not change velocities at time $t$. 

Third, our examples proving Theorems \ref{oc20.1} and  \ref{n6.1} do not involve balls traveling together, i.e., there is no pair of balls  such that the two balls touch at every time in the interval $[s,t]$ for some $s<t$.

\end{remark}

\begin{remark}\label{oc20.3}
Since our examples involve trajectories that nearly miss simultaneous collisions, we will briefly review this topic.
The laws of physics (the conservation of energy, momentum and angular momentum) do not uniquely determine (in general) the velocities after a simultaneous collision, i.e., a collision that involves more then two balls at the same time. The following example illustrates the point. 
Consider discs $A$, $B$ and $C$, with initial positions and velocities as follows: 
\begin{align*}
&x_A(0) = (0,-\sqrt{3}), \qquad\quad x_B(0) = (-1,0), \quad\quad x_C(0) = ( 1,0), \\
&D  x_A(0) = (0,\sqrt{3}), \qquad D  x_B(0) = (1,0), \qquad D  x_C(0) = (-1,0).
\end{align*}
These conditions describe a simultaneous collision. We will modify slightly the positions, but not the velocities in the following way: move $B$ slightly to the left, and $C$ slightly to the right, whereas $A$ is moved down and to the left so $A$ is closer to $B$ than to $C$. Disc $A$ is dislocated more than $B$ and $C$, so that the first collision after $t=0$ involves $B$ and $C$ only. At this collision $B$ and $C$ interchange velocities. The next collision will involve $A$ and $B$, and then there will be another one involving $A$ and $C$. If the initial displacements are of order $\eps$, then straightforward computation shows that the only possible collisions are the ones we just described, and after they take place, the final velocities are:
\begin{align*}
D  x_A =   \rpar{\frac14 , \frac{\sqrt{3}}{4}} + O(\eps), \quad D  x_B = \rpar{ -\frac32 , \frac{\sqrt{3}}{2} } + O(\eps), \quad D  x_C =\rpar{ \frac54 , \frac{\sqrt{3}}{4} } + O(\eps).
\end{align*}
Note that $B$ and $C$ have significantly different terminal velocities. If we start all over, but move $A$ down and to the right so it gets closer to $C$ than to $B$, the final velocities in this case can be obtained from the ones given  above by applying the symmetry with respect to the vertical axis. Thus, different initial positions arbitrarily close to a simultaneous collision yield very different outcomes for velocities.
In other words, in general, billiards trajectories are not continuous functions of  initial conditions. 
\end{remark}

\begin{example}\label{oc18.2}
We present a brief discussion of the one-dimensional case as it is an important ingredient in our main example. It is easy to see that in the one-dimensional case the radii of balls (i.e., the lengths of one-dimensional rods) play no essential role in the analysis of the evolution of the system  so we can and will replace the balls with reflecting points in this example. By doing this, we can focus our analysis on the gaps between neighboring balls.

We want to analyze the evolution of  $n$ reflecting points, whose positions at time $t$ are denoted $x_1(t), x_2(t), \dots, x_n(t)$.

Consider a collection of $n$ non-interacting points with the same initial positions and  velocities as  the $n$ reflecting points. Let the positions of points in the non-colliding system be denoted $\wt x_k(t)$. Thus $\wt x_k(t) = \wt x_k(0) + t D \wt x_k(0) =  x_k(0) + t D  x_k(0)$ for all $t\geq 0$ and $k$.
The slanted half-lines in Fig.~\ref{fig4} represent the trajectories of  non-interacting points.
\begin{figure}[!h] 
\includegraphics[width=0.71\linewidth]{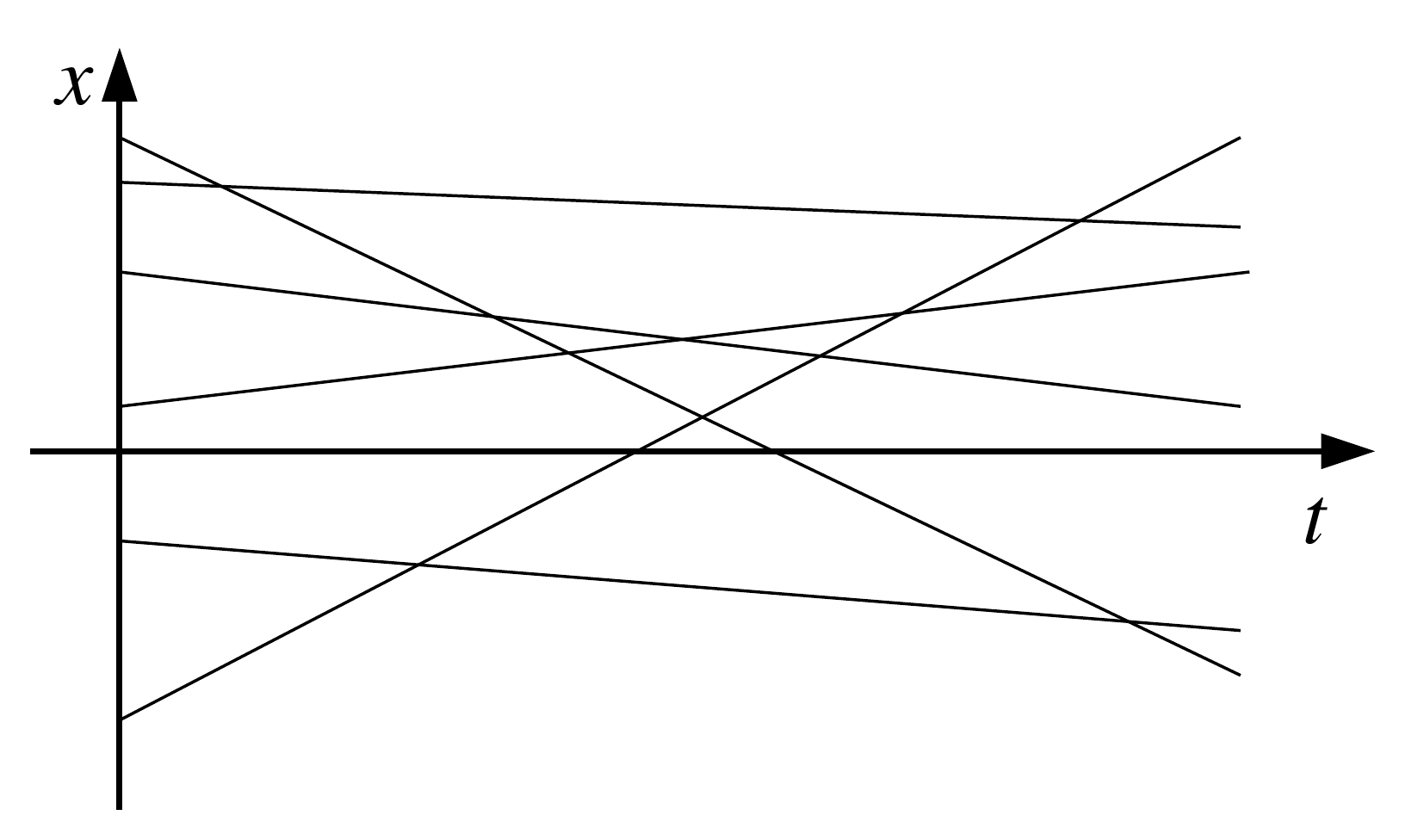}
\caption{
The figure represents trajectories (straight half-lines) of six non-interacting points moving along a line at constant velocities.
}
\label{fig4}
\end{figure}

It is well known and easy to check that the ``same'' picture represents reflecting points. If we let $\wh x_1(t)\leq \wh x_2(t) \leq \dots \leq \wh x_n(t)$ denote the ordering of $\{\wt x_1(t),\wt x_2(t), \dots , \wt x_n(t)\}$ then every function $\{\wh x_k(t), t\geq 0\}$ represents 
the trajectory of one of the points in the original system with collisions, i.e., $\{\wh x_k(t), t\geq 0\}=\{ x_j(t), t\geq 0\}$  where $j$ is such that $x_j(0) = \wh x_k(0)$.
Fig.~\ref{fig5} shows $\{\wh x_3(t), t\geq 0\}$
 in red color.
 
\begin{figure}[!h] 
\includegraphics[width=0.71\linewidth]{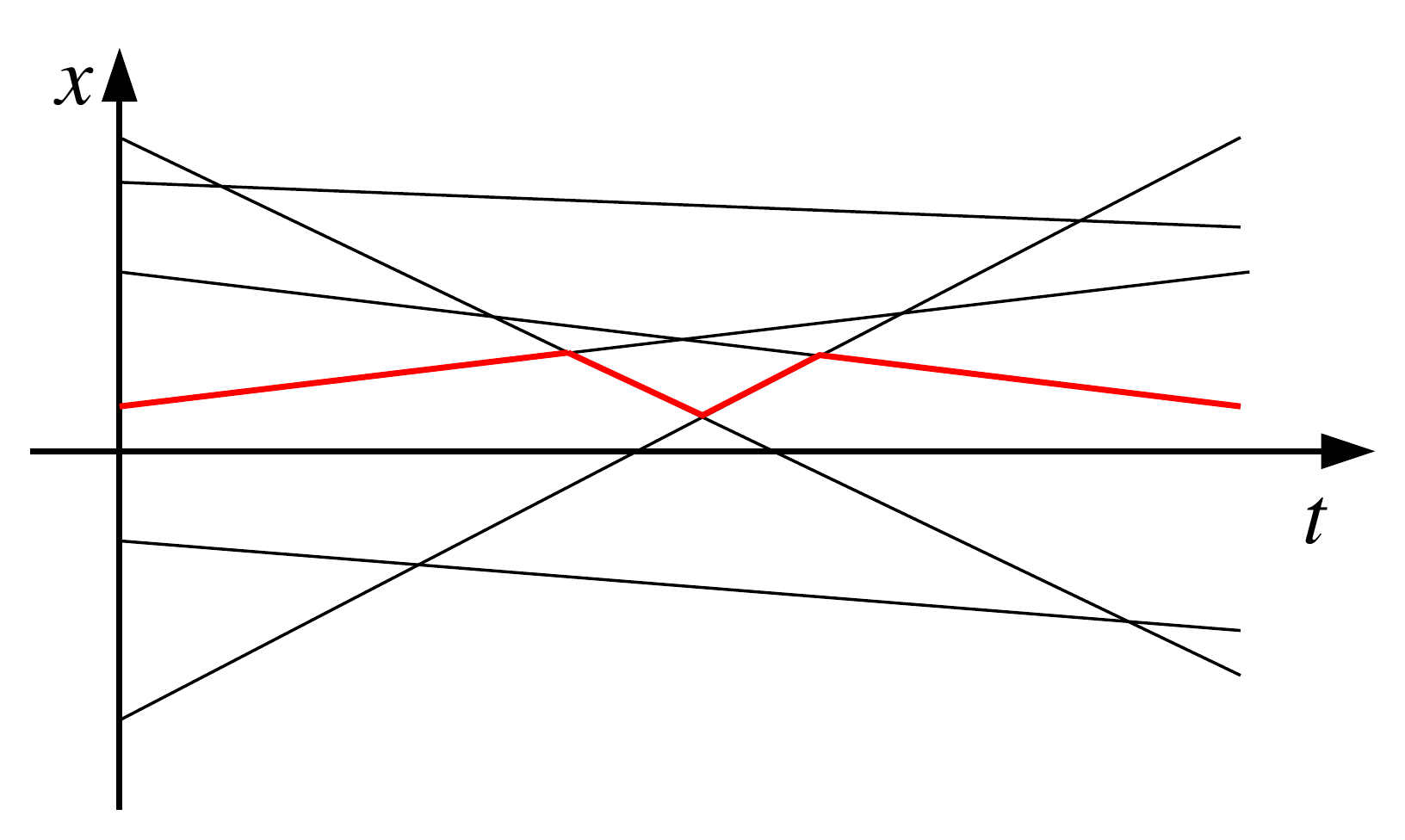}
\caption{
The same union of half-lines as in Fig.~\ref{fig4}, properly relabeled, represents trajectories of colliding points. The red polygonal line
is one of the reflecting trajectories.
}
\label{fig5}
\end{figure}
It is now clear that the maximum number of collisions in the one-dimensional system is not greater than the number of intersections of $n$ half-lines. The number of intersections of (distinct) half-lines is bounded by the number of pairs of half-lines, i.e., $n(n-1)/2$. 
To see that this bound is attained, let $x_k(0) = k$  for $k=1,\dots, n$ and $D  x_1(0) = 0$. Then use induction. Once $D  x_k(0)$ are chosen for $k=1,\dots,m$, find $D  x_{m+1}(0)< \min_{1\leq k \leq m} D  x_k(0)$ such that the half-line $\{x_{m+1}(0) + t D  x_{m+1}(0), t\geq 0\}$  intersects all  half-lines $\{x_k(0) + t D  x_k(0), t\geq 0\}$, $k=1,\dots,m$, but does not create any simultaneous intersections.
\end{example}

\section{The main example}\label{mainex}

\subsection{Notation.} 
\label{se:notex}
We define the following vectors 
\begin{align*}
w_0 &= (0,1), & u_0 &= (1,0), \\
w_1& =(-\sqrt{3}/2, 1/2),  &w_2&=(\sqrt{3}/2, 1/2),\\
u_1& = (1/2, \sqrt{3}/2), &u_2 &= (-1/2, \sqrt{3}/2).
\end{align*}
Note that for $k=0,1,2$, vectors $w_k$ and $u_k$ are orthogonal, and each one of them has unit length.  For $k=0,1,2$, we set $L_k = \{z\in \Rt: z = c w_k \text{  for some  } c\in \R\}$.

Recall that $P_w( z) = P_w z = (z\cdot w)w/|w|^2$ for $z,w\in\R^d$, $w\ne0$. Given an integer $n>0$, we will write
$n_1 = \lfloor n/3 \rfloor$ and $n_2 =
n-2n_1$, so $2n_1 + n_2 = n$.
We will divide a family of $n$ discs into three subfamilies 
$\{A_1, A_2, \dots, A_{n_2}\}$,  $\{B_1, B_2, \dots, B_{n_1}\}$
and $\{C_1, C_2, \dots, C_{n_1}\}$.
Let $a_k(t),b_k(t),$ and $c_k(t)$ denote the positions of the centers of discs $A_k, B_k,$ and $C_k$ at time $t$, respectively. In order to simplify some formulas, we set  $b_0(t)=a_1(t)$ and $c_0(t)=a_1(t)$, although there are no discs $B_0$ and $C_0$.

\subsection{Qualitative  description of the evolution}
\label{ja15.1}
This section contains  an informal description of our main example.
The description is  idealized in the sense that the positions and velocities of the discs in the fully rigorous example will be slightly different from these in the present informal version.
The initial positions of the discs are shown in Fig.~\ref{fig1}.

\begin{figure}[!b] 
\includegraphics[width=0.5\linewidth]{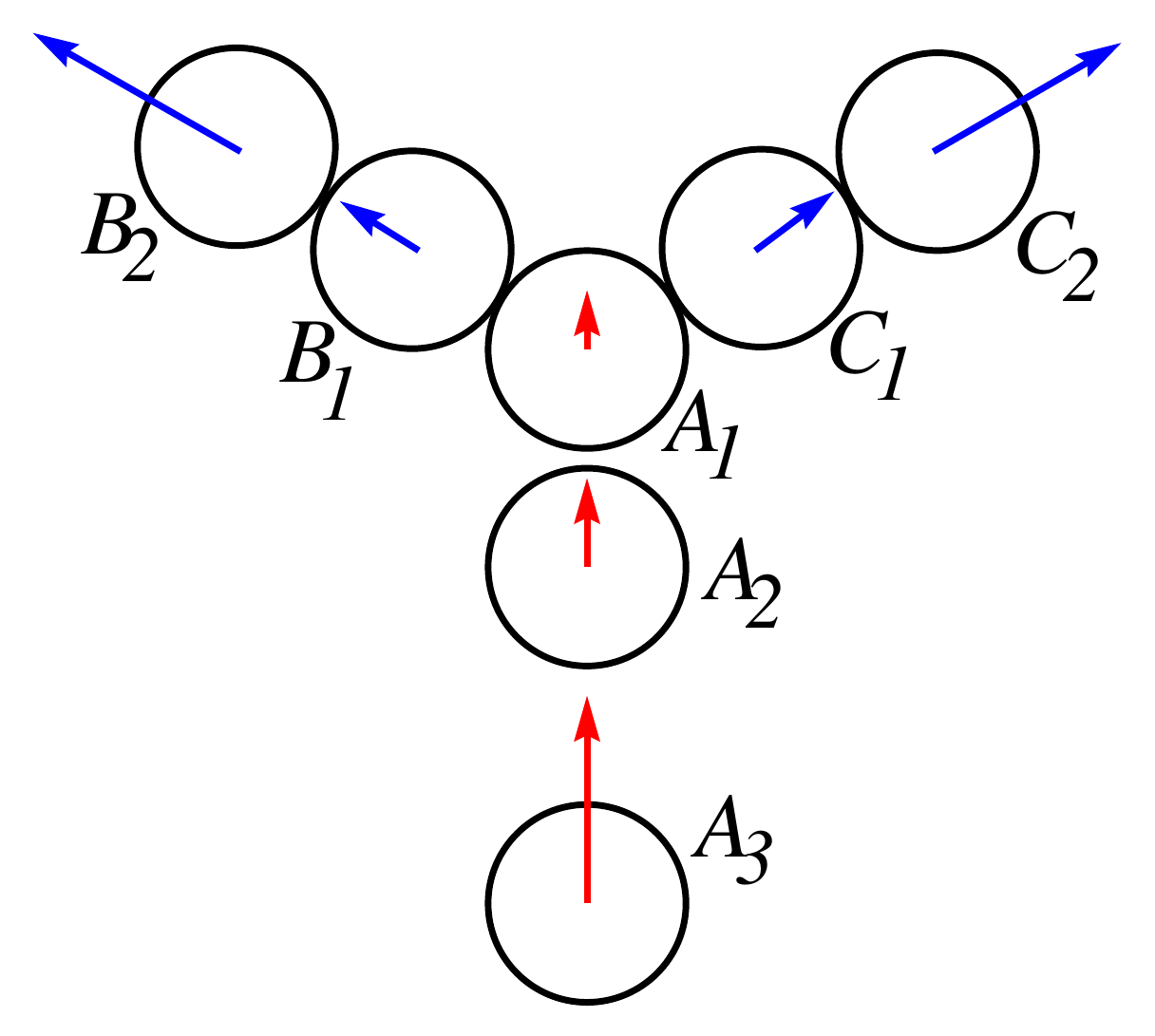}
\caption{
The figure represents the initial positions of the discs in the case $n=7$, $n_1=2$ and $n_2=3$. No pair of discs
are touching each other but some pairs of discs are depicted as touching  because the gaps between them
are extremely small. The red vertical arrows, anchored at the centers of the  three discs along the vertical arm, represent their
initial velocities. 
All other initial velocities are zero.
The blue arrows, at angles $30$ and $150$ degrees from the horizontal, represent the
terminal velocities of the four discs in the upper ``arms.'' 
}
\label{fig1}
\end{figure}

More formally, we place the center of disc $A_1$ at the origin, i.e., $a_1(0)=(0,0)$.
The centers of discs $B_k$ are on the line $L_1$ and the centers of discs $C_k$ are on the line $L_2$. The discs are arranged in the following order along the first line: $A_1, B_1, B_2, \dots, B_{n_1}$. Similarly, the  discs are arranged in the following order along $L_2$: $A_1, C_1, C_2, \dots, C_{n_1}$. On each of the two lines, the discs are positioned very close to one another, although none of the discs touches any other one.
The discs $A_1, A_2, \dots, A_{n_2}$ are placed along the negative part of the vertical axis, in this order. The  distances $|a_{k+1}(0)-a_k(0)|$ are very large and grow rapidly with $k$.

The initial velocities of $ B_1, B_2, \dots, B_{n_1}, C_1, C_2, \dots, C_{n_1}$ are zero. The velocity vectors of $A_1, A_2, \dots, A_{n_2}$ point in the upward direction, i.e., $D  a_k(0)/|D  a_k(0)| = (0,1)$ for all $k$. The speeds $|D  a_k(0)|$ are rapidly increasing as functions of $k$. 

Next we will describe the evolution of the system. At the first stage of the evolution, disc $A_1$ will hit discs $B_1, C_1, B_1, C_1 ...$. 
In other words, it will repeatedly hit these discs, alternating between them.
The total number of hits of $B_1$ will be $n_1$, and the total number of hits of $C_1$ will be also $n_1$. The first time disc $B_1$ is hit, there will be many collisions between discs $B_1, B_2, \dots, B_{n_1}$, with the result that $B_{n_1}$ will acquire substantial velocity and all other discs $B_1, B_2, \dots, B_{n_1-1}$ will have negligible velocities (see Fig.~\ref{fig6}).

\begin{figure}[!b] 
\includegraphics[width=0.8\linewidth]{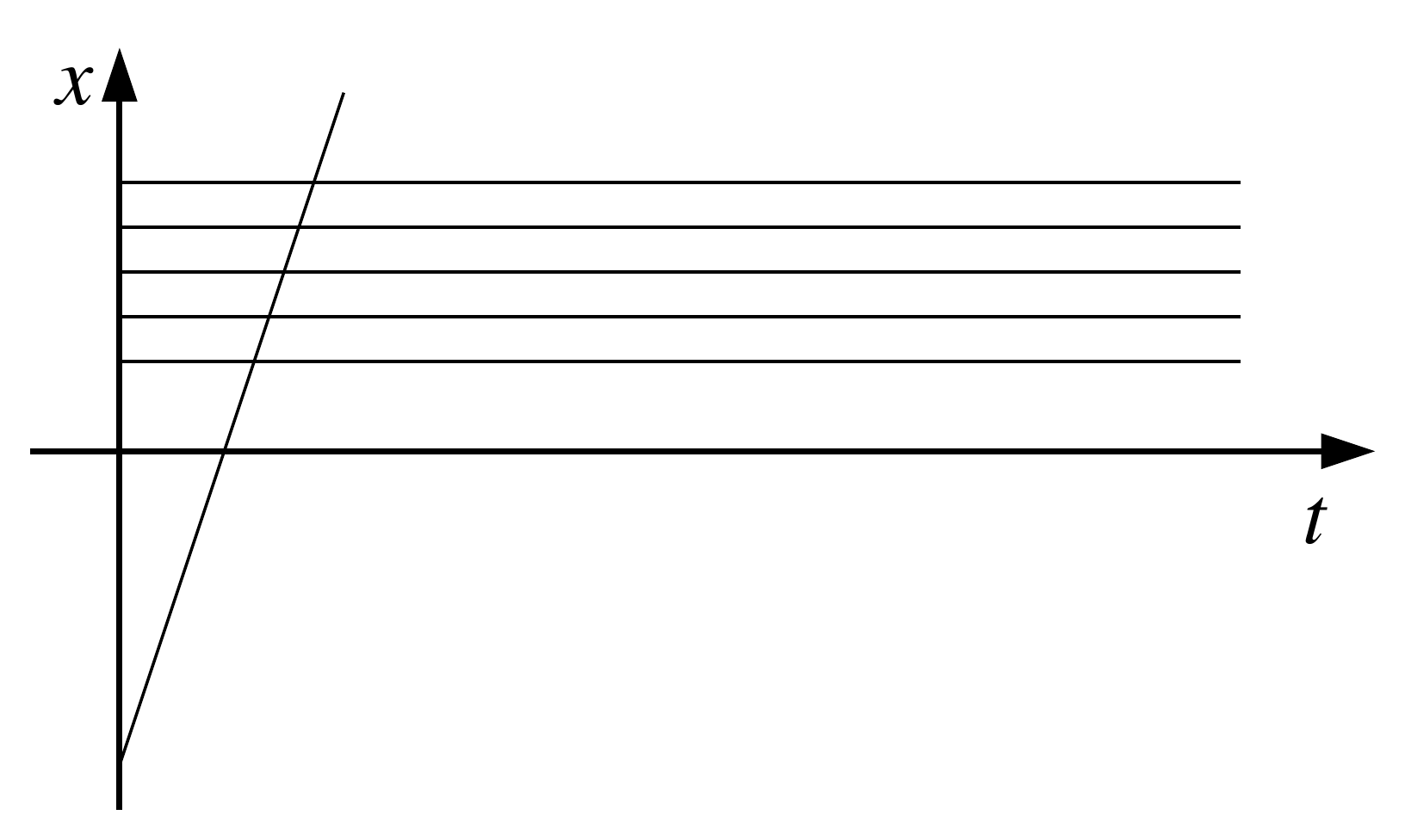}
\caption{
The figure represents trajectories of colliding particles on a line.  Five particles have zero initial velocities while one particle, at one end of the configuration, has a very large initial velocity. Only one particle, at the other end of the configuration, has a large terminal velocity.
}
\label{fig6}
\end{figure}

The second time disc $B_1$ is hit, many collisions between discs $B_1, B_2, \dots, B_{n_1-1}$ will occur, with the result that $B_{n_1-1}$ will acquire substantial velocity and all other discs $B_1, B_2, \dots, B_{n_1-2}$ will have negligible velocities. In general, after the $k$-th collision of $A_1$ with  $B_1$, there will be collisions between discs $B_1, B_2, \dots, B_{n_1-k+1}$, with the result that $B_{n_1-k+1}$ will acquire substantial velocity and all other discs $B_1, B_2, \dots, B_{n_1-k}$ will have negligible velocities. An analogous sequence of collisions will occur on the other side, involving discs $C_k$. The total number of collisions among $A_1$ and $B_k$'s at this stage of the evolution will be $n_1(n_1+1)/2$, and the same count applies to the collisions among $A_1$ and $C_k$'s. Hence, the total number of collisions involving $A_1$, $B_k$'s and $C_k$'s will be $n_1(n_1+1)$ at this stage of the evolution.

The initial distances between consecutive discs $B_{n_1}, \dots, B_1, A_1, C_1, \dots, C_{n_1}$ are assumed to have been so small that the distances between the discs at the end of the evolution described so far will be 
negligibly small, compared to the disc radius, as well.

At the second stage of the evolution, disc $A_2$ will arrive and it will hit $A_1$. The initial position and velocity of $A_2$ are chosen so that it will arrive at a time when the first stage of the evolution is over but the discs $B_{n_1}, \dots, B_1, A_1, C_1, \dots, C_{n_1}$ are still very close to their initial positions. The velocity of $A_2$ is assumed to be so high that velocities of  
$B_{n_1}, \dots, B_1, A_1, C_1, \dots, C_{n_1}$ acquired during the first stage of the evolution are negligibly small, compared to $|D a_2(0)|$. The disc $A_1$ will acquire a velocity very close to that of $A_2$ and the sequence of collisions that occurred during the first stage of evolution will occur again, at a much higher speed. Hence, the number of collisions 
between $A_1$, $B_k$'s and $C_k$'s will be $n_1(n_1+1)$ at the second stage of the evolution.  

We proceed by induction. Disc $A_k$ will arrive after the $(k-1)$-th stage of the evolution is over, it will start a sequence of collisions between consecutive discs $A_{k-1}, A_{k-2}, \dots, A_1$, with the result that $A_1$ will acquire  a velocity that will dwarf all previously occurring velocities and will result in $n_1(n_1+1)$ collisions 
between $A_1$, $B_k$'s and $C_k$'s  at this  stage of the evolution. 
Since the pattern will be repeated $n_2$ times,  the number of collisions 
between $A_1$, $B_k$'s and $C_k$'s, summed over all stages of the evolution, will be $n_1(n_1+1)n_2$.  

We add the $(n_2-1)n_2/2$ collisions between $A_k$'s to obtain the first two terms in \eqref{d31.1}:
\begin{align*}
n_1(n_1+1)n_2 + (n_2-1)n_2/2.
\end{align*}

The last term in \eqref{d31.1} represents the ``preparation'' of the initial conditions. At the beginning of this section, we assumed that discs $B_k$ were  stationary at the initial time $t=0$. We now change this assumption and instead assume that $B_k$'s have undergone $n_1(n_1-1)/2$ collisions (the maximum possible in one dimension) before reaching the ``initial'' positions at time $t=0$. The same assumption is made about discs $C_k$, so the total contribution to \eqref{d31.1} is $n_1(n_1-1)$, i.e., the last term in that formula. But then the discs $B_k$ and $C_k$ have non-zero initial velocities. This does not change the count of the collisions after time $t=0$ because the speeds of $B_k$'s and $C_k$'s at time $t=0$ can be made arbitrarily small and  we can invoke the continuity of the trajectories as functions of the initial conditions (see Remark \ref{re:ja2.1b}).

\begin{remark}\label{oc18.1}
None of the following three potential ways to   improve the bound in Theorem \ref{oc20.1} seems to work. 

\begin{enumerate}[(i)]
\item In the current version, about $n/3$ discs are assigned to each of the three arms of the initial configuration (see Fig.~\ref{fig1}). Every other proportion of discs in the three arms yields a lower number of collisions.

\item In higher dimensions, one could place a larger number of ``arms''  next to the ball $A_1$. This modification would not increase the number of collisions. 
  
\item Discs in the upper arms could be arranged in a more complicated pattern, say, a tree-like structure. Once again,  this would not increase the number of collisions.
\end{enumerate}
\end{remark}

\subsection{Pinned billiard balls}
\label{d7.1}

This section contains an alternative  informal description of the evolution of the system of discs. The description given in Section \ref{ja15.1} was totally qualitative. We will give some explicit (although approximate) velocities in this section. We hope that this will help the reader to follow the formal, very technical, proof of Theorem \ref{oc20.1}.

One of the main ideas behind the proof of Theorem \ref{oc20.1} comes from the pinned billiard balls model, to be discussed in greater detail in \cite{ABD}.
In the pinned billiard balls model, balls do not move at all. Some balls touch some other balls. Each ball has an associated vector that can be thought of as pseudo-velocity. A sequence of
pairs of touching balls is chosen by an external process for pseudo-collisions.
At the time of each pseudo-collision, the pseudo-velocities associated with the pair of touching balls change values as in \eqref{oc2.3}-\eqref{oc4.2}.

We will now represent the first stage of the evolution (in which discs $A_2,\ldots,A_{n_2}$ are not involved) of our main example as a pinned billiard balls model. All gaps described in Section \ref{ja15.1} as small  are now assumed to be zero. Discs $B_k$ lie next to one another with their centers on $L_1,$ and discs $C_k$ lie next to one another with their centers on $L_2$. Initially, only $A_1$ has non zero velocity $w_0=(0,1)$.  We choose  $B_1$ and $A_1$ to collide first, which yields velocities $P_{w_1}w_0$ for $B_1$, and $P_{u_1}w_0$ for $A_1$, after the collision.  

Next we choose   $B_1$ and $B_2$ to collide, then $B_2$ and $B_3$, and so on, until $B_{n_1}$  acquires velocity $P_{w_1}w_0$, and all other discs $B_k$ have velocity zero. Then, we choose  $A_1$ to collide with $C_1$, which leaves $A_1$ with velocity  $P_{u_2}P_{u_1}w_0$, and $C_1$ with velocity  $P_{w_2}P_{u_1}w_0$. Then $C_1$ collides with $C_2$, then $C_2$ with $C_3$, and so on, until $C_{n_1}$ acquires velocity $P_{w_2}P_{u_1}w_0$, all other discs $C_k$ have velocity zero. At this point, there are only three discs with non zero velocities: disc $B_{n_1}$ has velocity $P_{w_1}w_0$, disc $C_{n_1}$ has velocity $P_{w_2}P_{u_1}w_0$, and disc $A_1$ has velocity $P_{u_2}P_{u_1}w_0$. Therefore, the only possible collision is between $A_1$ and $B_1$. After this collision, disc $A_1$ has velocity $P_{u_1}P_{u_2}P_{u_1}w_0$, and disc $B_1$ has velocity $P_{w_1}P_{u_2}P_{u_1}w_0$. This last velocity is transmitted by collisions to disc $B_{n_1-1}$, but its magnitude is not sufficiently large  to allow for a collision between $B_{n_1-1}$ and $B_{n_1}$, according to \eqref{oc2.3}-\eqref{oc4.2}. 
The process is continued inductively.

After the $k$-th collision between $A_1$ and $B_1$, disc $A_1$ is headed towards $C_1$ with velocity
\begin{equation}
\label{ja3.2}
 (P_{u_1}P_{u_2})^{k-1}P_{u_1}w_0 = \frac{1}{2^{2k-2}}	 (w_0\cdot u_1)u_1 =  \frac{\sqrt{3}}{2^{2k-1}}	 u_1 = \frac{3}{2^{2k}}w_2 + \frac{\sqrt{3}}{2^{2k}}u_2.
\end{equation}
After the $k$-th collision between $A_1$ and $C_1$, disc $A_1$ is headed towards $B_1$ with velocity 
\begin{equation}
\label{ja3.1}
 (P_{u_2}P_{u_1})^{k}w_0 = \frac{1}{2^{2k-1}}	(w_0\cdot u_1)u_2 =
\frac{\sqrt{3}}{2^{2k}} u_2 = \frac{3}{2^{2k+1}}w_1 + \frac{\sqrt{3}}{2^{2k+1}}u_1.
\end{equation}

The main technical challenge in the  proof of Theorem \ref{oc20.1}
is to show that a certain sequence of disc evolutions converges, in an appropriate sense, to the pinned billiard balls model outlined above. The reason for the great complexity of that argument is that our configuration of the pinned balls represents simultaneous collisions and, therefore, we cannot appeal to continuity of trajectories as functions of initial conditions (see Remark \ref{oc20.3}).

\subsection{The Skorohod space.}\label{Skor}
A detailed discussion of the Skorohod space can be found in \cite[Ch.~3, Sect.~14]{Bill}.

Let $\calD[0, T] = \calD([0,T] ; \R^m)$ denote the set of all functions $f:[0,T]\to \R^m$ which have left limits and are continuous on the right at every point in $[0,T]$.
The set $\calD[0,\infty)$ is defined in a similar manner. Each of these sets is variably referred to as the Skorokhod space, the space of RCLL functions or the space of c\`adl\`ag functions.

We will define two metrics on $\calD[0,T]$. Let $\Lambda_T$ be the family of all strictly increasing continuous mappings of $[0,T]$ onto itself. For $f,g \in D[0,T]$, we define $\dist^T(f,g)$ as the infimum of positive $\eps$ such that there exists $\lambda \in \Lambda_T$ satisfying the following conditions,
\begin{align*}
\sup_{0\leq t \leq T} |\lambda(t) - t| \leq \eps \qquad \text{  and  }\qquad
\sup_{0\leq t \leq T} |f(t) -g(\lambda( t))| \leq \eps.
\end{align*}

For $\lambda \in \Lambda_T$, let 
\begin{align*}
\|\lambda\|_T = \sup_{0\leq s < t \leq T} \left | \log
\frac{\lambda(s) - \lambda(t)}{s-t} \right|.
\end{align*}
For $f,g \in \calD[0,T]$, we define $\dist_0^T(f,g)$ as the infimum of positive $\eps$ such that for some $\lambda \in \Lambda_T$,
\begin{align*}
 \|\lambda(t)\|_T \leq \eps \qquad \text{  and  }\qquad
\sup_{0\leq t \leq T} |f(t) -g(\lambda( t))| \leq \eps.
\end{align*}
The metrics $\dist^T$ and $\dist_0^T$ are equivalent, i.e., they generate the same topology. The first of these metrics is not complete but the latter one is.

\section{Proof of the main theorem} 

We  proceed to show that the evolution informally described in Sections \ref{ja15.1} and \ref{d7.1} can actually occur. Recall that in the informal description, the evolution was divided into stages. At any of these stages, one of the discs $A_k$ 
arrived with a great velocity and caused a large number of collisions among the discs that participated in the earlier stages of the evolution. We will analyze one of these stages, corresponding to the arrival of a disc labeled $A_m$. The  initial strategy is to project trajectories of disc centers onto lines $L_k$ so that the problem becomes one-dimensional, in a suitable sense. Then we will express the information about the motion of the centers in terms of the evolution of the gaps---this transformation is similar to the one in Example  \ref{oc18.2}, where we replaced one-dimensional balls (rods) with reflecting points. 
For our example to work, the gaps between the discs have to be very small.
The gaps will be rescaled so that they are of magnitude 1. Finally, we will show that the ``rescaled gap processes'' converge to a non-degenerate limit when the initial magnitudes of the gaps converge to 0, and satisfy appropriate conditions. 

\subsection{Rescaling of the system of discs} 
\label{se:system}
Recall notation from Sections \ref{se:notex} and \ref{ja15.1}. Fix some $m,n_1\geq 1$. We will represent a system of $2n_1+m$ discs as a function
\begin{align}\label{f10.1}
{\bf S}(t) = \rpar{ a_m(t),\ldots, a_1(t),b_1(t),\ldots, b_{n_1}(t),c_1(t), \ldots, c_{n_1}(t)},
\end{align}
where each component represents  the trajectory of the center of a disc of radius $r=1$ in $\Rt$. 
Functions $a_k,b_k$ and $c_k$ correspond to discs $A_k, B_k$ and $C_k$, resp.
The time derivative (velocity) of each of these functions is assumed to be well defined at all times, except for the finitely many times when collisions occur. Recall that $D$ stands for the right continuous version of the derivative with respect to $t$.
It will be convenient to define $b_0(t) $ and $c_0(t)$ as $a_1(t)$, i.e., 
  $b_0(t)=c_0(t)=a_1(t)$. These artificial functions $b_0(t) $ and $c_0(t)$, not representing any discs, will simplify some notation.

\begin{definition}
\label{de:gaps}
Let
\begin{align*}
\Gamma^{A}_k(t) &= w_0 \cdot ( a_{k-1}(t) - a_{k}(t) ) -2, \qquad k=2,\ldots,m , \\
\Gamma^B_k(t) &= w_1 \cdot ( b_{k}(t) - b_{k-1}(t) ) -2 , \qquad k=1,\ldots,n_1 , \\
\Gamma^C_k(t) &= w_2 \cdot ( c_{k}(t) - c_{k-1}(t) ) -2 ,\qquad k=1,\ldots,n_1 , \\
\rho^-_\Gamma(t) &= \min_{2\leq j\leq m}\Gamma^{A}_j(t)
\land \min_{1\leq j\leq n_1}\Gamma_j^{B}(t) \land \min_{1\leq j\leq n_1}\Gamma_j^{C}(t),\\
\rho^+_\Gamma(t) &= \max_{2\leq j\leq m}\Gamma^{A}_j(t)
\lor \max_{1\leq j\leq n_1}\Gamma_j^{B}(t) \lor \max_{1\leq j\leq n_1}\Gamma_j^{C}(t).
\end{align*}
\end{definition}

Somewhat informally speaking,
the functions $\Gamma^A_k, \Gamma^B_k$ and $\Gamma^C_k$ represent gaps between the discs projected on lines $L_j$.

\begin{definition}\label{de:ic}
For $\eps>0,\ \rho>1$, we say that a system $\bS$ of $2n_1+m$ discs satisfies $(\eps,\rho)$ initial conditions at time $t_0\in\R$ if:
\begin{enumerate}[(i)]
\item We have that $w_1\cdot a_1(t_0)\geq 0$, $w_2\cdot a_1(t_0)\geq 0$, and $w_0\cdot a_1(t_0)\leq \eps$.
\item For $j=0,\ldots, n_1$, and $k\geq 2,\ldots,m$, it holds that 
\begin{align}
\label{eq:ic_u}
| P_{u_0} a_k(t_0)| \vee | P_{u_1}b_j(t_0) |  \vee | P_{u_2} c_j(t_0) | \leq \eps.
\end{align}

\item It holds that
\begin{align}
\label{eq:ic_gap}
\eps\rho^{-1} \leq \rho^-_\Gamma(t_0) \quad\text{and}\quad \rho^+_\Gamma(t_0)\leq \eps.
\end{align}

\item It holds that either
\begin{align}
\label{eq:ic_ratio}
\frac{w_1\cdot ( b_{1}(t_0) - a_{1}(t_0) ) - 2 }{w_2\cdot ( c
_{1}(t_0) - a_{1}(t_0) ) - 2 }  \leq \frac 23, \quad\text{or}\quad \frac{w_2\cdot ( c
_{1}(t_0) - a_{1}(t_0) ) - 2 }{w_1\cdot ( b_{1}(t_0) - a_{1}(t_0) ) - 2 }  \leq \frac 23.
\end{align}
\end{enumerate}
\end{definition}

The meaning of the following definition is that  we replace discs with point masses, as in Example  \ref{oc18.2}, and enlarge the distances by the factor of $\eps^{-1}$, while slowing down the evolution at the same time, to keep the energy uniformly bounded.

\begin{definition}
We set
\begin{align}
\label{eq:X0}
&\calX^{A,\eps}_1 (t) = \eps^{-1}w_0\cdot a_1(\eps t), \quad 
\calX^{B,\eps}_0(t) = \eps^{-1}w_1\cdot a_1(\eps t), \quad 
\calX^{C,\eps}_0(t) = \eps^{-1}w_2\cdot a_1(\eps t).
\end{align}
Note that $\calX^{B,\eps}_0(t) + \calX^{C,\eps}_0(t) =  \calX^{A,\eps}_1(t)$.
For $j=1,\ldots,n_1$, and $k=2,\ldots m$, we recursively define
\begin{align}
\label{eq:XA}
&\calX^{A,\eps}_k (t) = \calX^{A,\eps}_{k-1}(t) - \eps^{-1} \Gamma^A_{k}(\eps t) , \\
\label{eq:XB}
&\calX^{B,\eps}_j (t) = \calX^{B,\eps}_{j-1}(t) + \eps^{-1} \Gamma^B_{j}(\eps t) , \\
\label{eq:XC}
&\calX^{C,\eps}_j (t) = \calX^{C,\eps}_{j-1}(t) + \eps^{-1} \Gamma^C_j(\eps t) .
\end{align}
The associated vector of scaled point masses corresponding to  $\bf S$ is defined by
\begin{align}
\label{eq:bX}
\bX^\eps(t) = \rpar{ \calX^{A,\eps}_m(t),\ldots, \calX^{A,\eps}_{1}(t),
\calX^{B,\eps}_0(t),\ldots, \calX^{B,\eps}_{n_1}(t), 
\calX^{C,\eps}_0(t),\ldots, \calX^{C,\eps}_{n_1}(t)}.
\end{align}
\end{definition}

\begin{remark}
\label{re:gap_dd}
(i) The functions in \eqref{eq:bX} represent positions of point masses associated to the respective discs, but they do not precisely follow the dynamics of elastic collisions. 

(ii)
By induction, we can show that for $k=1,\ldots,m$, and $j=0,\ldots, n_1$,
\begin{align}
\label{eq:dA}
&D \calX^{A,\eps}_k(t) = w_0\cdot D  a_k(\eps t), \\
\label{eq:dB}
&D  \calX_j^{B,\eps}(t) = w_1\cdot D  b_j(\eps t) , \\
\label{eq:dC}
&D  \calX_j^{C,\eps}(t) = w_2\cdot D  c_j(\eps t).
\end{align}
As long as only the $2n_1+m$ discs represented by $\bS$ are involved in the collisions, the speed of  $\bS$ (as a moving point in $\R^{2n_1+m}$) is constant. It follows easily from \eqref{eq:dA}-\eqref{eq:dC}  that $| D  \bX^\eps(t) | \leq 3 | D  \bS(t_0) |$ 
\begin{align}\label{f10.3}
| D  \bX^\eps(t) | \leq 3 | D  \bS(t_0) |, \qquad t\geq t_0.
\end{align}
 The factor 3 in the inequality is due to the presence of 
extra components $\calX^{B,\eps}_0(t)$ and $\calX^{C,\eps}_0(t)$ in $\bX^\eps(t)$, with no counterparts in $\bS$.
\end{remark}

Let $\bM$ be the family of all functions $f: [0, \infty) \to \R^{2+2n_1}$ such that each coordinate is a Lipschitz function with Lipschitz constant 2, i.e., $|f_j(s) - f_j(t)| \leq 2|s-t|$ for every $f=(f_1,f_2,\dots, f_{2+2n_1})\in\bM$, $1\leq j \leq 2 + 2n_1$ and $s,t\geq 0$.
For $f, h\in \bM$, we define the distance between $f$ and $h$ as
\begin{align}\label{d26.2}
\dist(f,h) = \sum_{k=1}^\infty 
2^{-k} \left(1 \land \sup_{0\leq t \leq k} |f(t) - h(t)| \right).
\end{align}
The topology associated with this metric is  the topology of uniform convergence on compact time intervals (``time'' refers here to the domain of functions in $\bM$).

\subsection{The limiting evolution}
\label{de:z}
In this section, we will define a family of functions that can be limit points for evolutions of systems defined  in \eqref{eq:bX} as the initial size of all gaps goes to zero. The definition will be complicated because, at the heuristic level, the data are the initial values of the gaps between reflecting points. Then we will construct  trajectories of the reflecting points and finally  we will go back and define time evolutions of gap processes. The functions corresponding to $A_1$ will require a separate and different treatment since disc $A_1$ hits disc $B_1$ along $L_1$ and disc $C_1$ along $L_2$. The functions constructed in this step do not have a direct interpretation as trajectories of colliding discs.

For any $\rho>1$, we will define a compact subset $\bG_\rho$ of $\bM$.
Consider non-negative real numbers $\calZ^B_{0}(0)$, $ \calZ^C_{0}(0) $, $\calZ^A_1(0) = \calZ^{B}_0(0) + \calZ^{C}_0(0)$, $\calG^A_k(0)$, $\calG^{B}_j(0) $, and $\calG^{C}_j(0) $ for $k=2,\ldots,m$, and $j=1,2,\dots,n_1$.
Suppose that these numbers also satisfy 
\begin{align}
\label{j5.01}
\calG^{B}_1(0) \leq \frac23 \calG^{C}_1(0),
\end{align} 
\begin{align}\label{d2.1}
\rho^{-1}\leq \min_{1\leq j \leq n_1} \calG^{B}_j(0) \land
\min_{1\leq j \leq n_1} \calG^{C}_j(0) \land \min_{2\leq k \leq m} \calG^{A}_k(0),
\end{align}
and
\begin{align}\label{n26.10}
\calZ^A_1(0)\lor \max_{2\leq k \leq m} \calG^{A}_k(0)
\lor \max_{1\leq j \leq n_1} \calG^{B}_j(0) \lor
\max_{1\leq j \leq n_1} \calG^{C}_j(0)
\leq 1.
\end{align}
For $k=2,\ldots,m$, and $j=1,2,\dots,n_1$, let
\begin{align}\notag
\calZ^A_k(0) &= \calZ^A_{1}(0) - \calG^{A}_2(0) - \dots - \calG^{A}_k(0),\\
\calZ^B_j(0) &= \calZ^B_{0}(0) + \calG^{B}_1(0) + \dots + \calG^{B}_j(0),
\label{f14.1} \\
\calZ^C_j(0) &= \calZ^C_{0}(0) +\calG^{C}_1(0) + \dots + \calG^{C}_j(0).\notag
\end{align}
Define times 
\begin{align}\label{n26.12}
0=t^A_m<t^A_{m-1} < \dots<t^A_1 < t^B_1 <  t^C_1< t^B_2 < t^C_2< \dots < t^B_{n_1} < t^C_{n_1},
\end{align}
by
\begin{align}
t^A_m=0, \quad t^A_{k-1} - t^A_{k} =  \calG^A_{k}(0), \quad k=2,\ldots,m,
\end{align}
\begin{align}\label{n26.13}
t^B_1 - t^A_1& = 2 \calG^{B}_1(0)  ,\\
t^C_1 - t^B_1 &= \frac43\left(\calG^{C}_1(0)  - \calG^{B}_1(0)\right),\\
t^B_j - t^C_{j-1} &= \frac{2^{2j-1}}{3} \calG^{B}_j(0), \quad j=2, \dots, n_1,\label{n26.14}\\
t^C_j - t^B_j &= \frac{2^{2j}}{3} \calG^{C}_j(0), \quad j=2, \dots, n_1.\label{n26.15}
\end{align}
The intuitive meaning of the above definition is the following. The quantities $\calG^A_k$, $\calG^B_j$ and $\calG^C_j$ represent gaps between discs. The times $t^A_k$ in \eqref{n26.12} represent the collision times between pairs $(A_{k+1},A_k)$, and the remaining  times represent collision times between $A_1$ and either $B_1$ or $C_1$. These times are defined by dividing distances (gaps) by velocities in
\eqref{ja3.2}-\eqref{ja3.1} that have been properly projected on $w_1$ or $w_2$, according to whether the next collision is with $B_1$ or with $C_1$.

Let
\begin{equation}
\label{n26.16}
\begin{aligned}
\calV^{B}_{1} &= \frac12 ,\quad \calV^C_1 = \frac34, \\ 
 \calV^{B}_{j} &= \frac{3}{2^{2j-1}}, \quad \calV^{C}_{j} = \frac{3}{2^{2j}}, 
\qquad j=2,\dots, n_1.
\end{aligned}
\end{equation}
For $t\geq 0$, let
\begin{equation}
\label{eq:ya}
\begin{aligned}
\calY^A_m(t) &= \calZ^{A}_{m}(0) +  t, \\
\calY^A_k(t) &= \calZ^{A}_{k}(0), \quad k=1,\ldots,m-1, \\
\end{aligned}
\end{equation}

Let
\begin{align}\label{j11.1a}
\left(\calZ^A_m(t), \calZ^A_{m-1}(t), \dots, \calZ^A_{1}(t)\right)
\end{align}
be the increasing ordering of $\left(\calY^A_m(t), \calY^A_{m-1}(t), \dots, \calY^A_{1}(t)\right)$ for every $t \geq 0$. Later, we will actually redefine $\calZ^A_1(t)$ for $t> t^A_1$.

For $j=1,\dots, n_1$, let 
\begin{align}
\label{eq:yb}
\calY^B_j(t) =
\begin{cases} 
\calZ^{B}_{j}(0) ,
& t\in [0, t^B_j),\\
 \calZ^{B}_{j}(0) + \calV^{B}_{j} \cdot (t-t_j^B),
& t\in [ t^B_j, \infty).
\end{cases}
\end{align}
Let
\begin{align}\label{d4.2}
\left(\calZ^B_1(t), \calZ^B_2(t), \dots, \calZ^B_{n_1}(t)\right)
\end{align}
be the increasing ordering of $\left(\calY^B_1(t), \calY^B_2(t), \dots, \calY^B_{n_1}(t)\right)$ for every $t\geq 0$.

Similarly, for $j=1,\dots, n_1$, let 
\begin{align}
\label{eq:yc}
\calY^C_j(t) =
\begin{cases} 
\calZ^{C}_{j}(0) ,
& t\in [0, t^C_j),\\
 \calZ^{C}_{j}(0) + \calV^{C}_{j} \cdot (t-t^C_j),
& t\in [ t^C_j, \infty).
\end{cases}
\end{align}
Let
\begin{align}\label{d4.3}
\left(\calZ^C_1(t), \calZ^C_2(t), \dots, \calZ^C_{n_1}(t)\right)
\end{align}
be the increasing ordering of $\left(\calY^C_1(t), \calY^C_2(t), \dots, \calY^C_{n_1}(t)\right)$ for every $t\geq 0$.

The increasing order is taken in the definition above to mimic the fact that one-dimensional collisions between discs can be treated as crossings of straight lines (see Figures \ref{fig4},\ref{fig5} and \ref{fig6}).

Let
\begin{align}
\label{j11.2}
\calZ^B_{0}(t) &= \calZ^B_{0}(0)   \quad \text{  for  }t\in[0,t^A_1), \\
\label{ja8.3}
\calZ^B_{0}(t) &= \calZ^B_{0}(0)  + \calV^B_1 \cdot t  \quad \text{  for  }t\in[t^A_1,t^B_{1}), \\
\label{ja8.5}
\calZ^B_{0}(t)
&=
\begin{cases}
\calZ^B_1(t^B_j) & \text{  for  } t\in[t^B_j, t^C_j), \ j=1,\dots, n_1,\\
\calZ^B_1(t^B_j) + \calV^B_{j+1} \cdot (t-t^C_j) & \text{  for  } t \in [t^C_j, t^B_{j+1}), \ j=1,\dots, n_1-1,
\end{cases} \\
\label{ja8.6}
\calZ^B_{0}(t)
&= \calZ^B_1(t^B_{n_1}) + 3 (1/2)^{2n_1+1} \cdot  (t-t^C_{n_1}) 
\quad \text{  for  }t\in[t^C_{n_1}, \infty),
\\
%
%
\label{j11.3}
\calZ^C_{0}(t) &= \calZ^C_{0}(0)  \quad \text{  for  }t\in[0,t^A_{1}), \\
\label{ja8.7}
\calZ^C_{0}(t) &= \calZ^C_{0}(0)  + \calV^B_1 \cdot t \quad \text{  for  } t \in [t^A_1,t^B_{1}), \\
\label{ja8.8}
\calZ^C_{0}(t) &= \calZ^C_{0}( t^B_1 ) + \calV^C_1 \cdot (t-t^B_{1}) \quad \text{  for  } t \in [t^B_{1}, t^C_1), \\
\label{ja8.10}
\calZ^C_{0}(t) &=
\begin{cases}
\calZ^C_1(t^C_j) & \text{  for  } t \in [t^C_j, t^B_{j+1}), \ j=1,\dots, n_1-1,\\
\calZ^C_1(t^C_j) + \calV^C_{j+1} \cdot (t-t^B_{j+1})  & \text{  for  } t \in [ t^B_{j+1},t^C_{j+1}), \ 
j=1,\dots, n_1-1,
\end{cases} \\
\calZ^C_{0}(t)
&= \calZ^C_1(t^C_{n_1}) \quad  \text{  for  }t\in[t^C_{n_1}, \infty).\label{ja8.11}
\end{align}

We next redefine $\calZ^A_1(t)$, to change its behavior for $t\geq t^A_1$, due to collisions between $A_1$ and $B_1$ or $C_1$:
\begin{align}
\label{j11.4}
\calZ^A_1(t) &= \calZ^B_0(t) + \calZ^C_0(t),\quad \text{ for } t\geq 0.
\end{align}
 The purpose of including $\calZ^A_1$ in  \eqref{j11.1a} was to have $\calZ^A_2(t) = \calZ^A_1(0)$ for $t\geq t^A_1$.

At the intuitive level, $\calZ^B_j$'s represent trajectories of $n_1$ points moving on a line and reflecting elastically, while $\calG^B_j$'s represent the gaps (distances) between consecutive points  (see Fig.~\ref{fig10}).
\begin{figure}[!b] \includegraphics[width=0.8\linewidth]{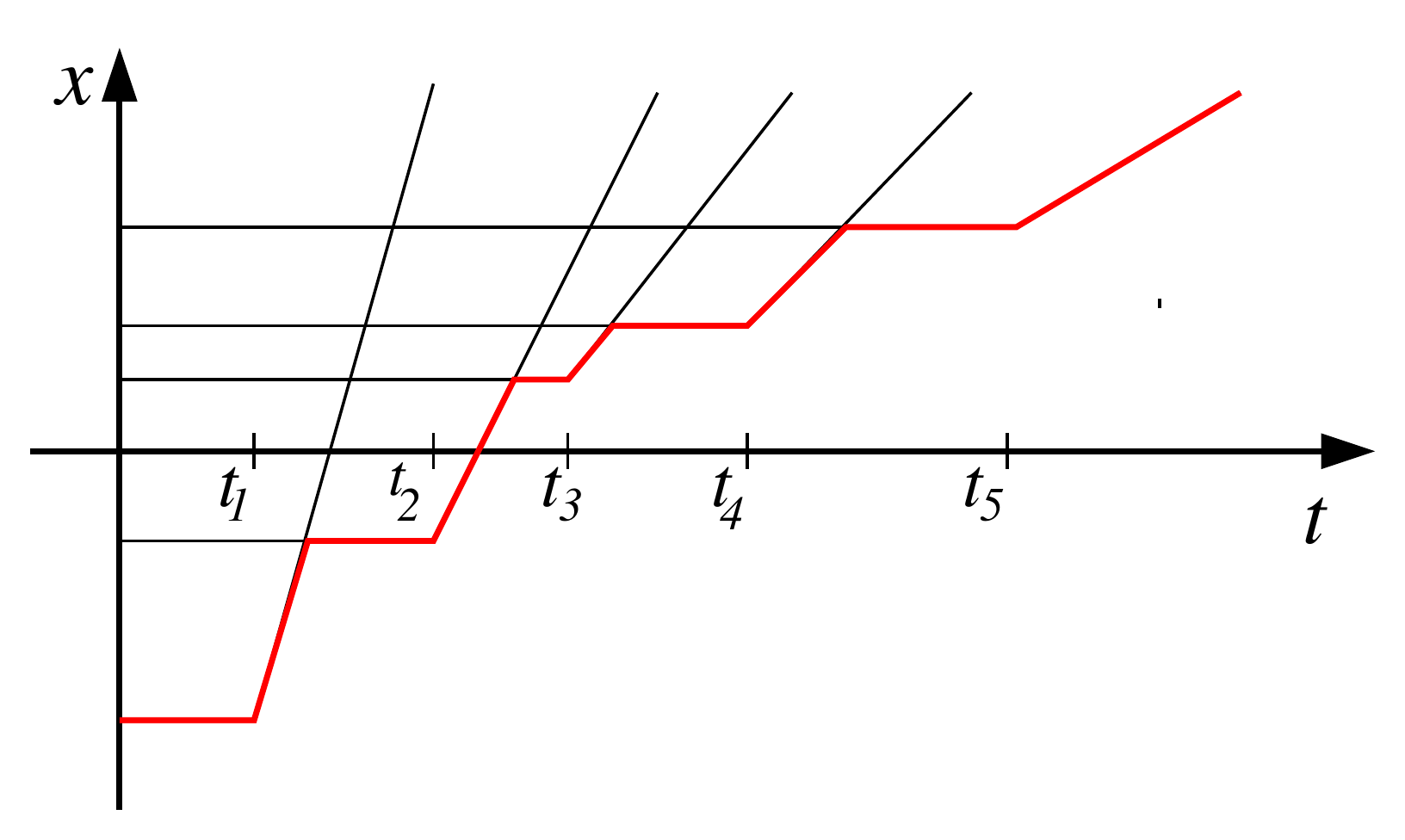}
\caption{
The figure represents the trajectories $\calZ^B_j$ of reflecting points on  line $L_1$. 
The horizontal axis represents time and the vertical axis represents  line $L_1$.
The distances between the trajectories represent the gaps $\calG^B_j$ between the reflecting points. The trajectory  $\calZ^B_1$ is marked red. The function $\calZ^B_{0}$ corresponding to $A_1$ is not shown because  the center of $A_1$  is not constrained to a line. For this reason, the sharp turns of $\calZ^B_1$ at times $t_k$ appear to have no cause---in fact, they represent collisions between $A_1$ and $B_1$.
}
\label{fig10}
\end{figure}
 The point represented by $\calZ^B_1$ is hit at times $t^B_k$. A similar remark applies to objects indexed by $C$. 

Let
\begin{align}
\label{eq:nz}
\bZ(t)
=
\left( \calZ^A_m(t), \ldots , \calZ^A_1(t),\calZ^B_{0}(t), \ldots, \calZ^B_{n_1}(t),
\calZ^C_{0}(t), \ldots, \calZ^C_{n_1}(t)
\right).
\end{align}

Let $\bG_\rho$ be the family of functions $\bZ(t)$ satisfying  conditions \eqref{j5.01}-\eqref{eq:nz}  (the dependence on $\rho$ occurs \eqref{d2.1}).  Discontinuities of $D \bZ$ occur at times $t^B_j$ and $t^C_j$, and at those positive times (hence, not at time $t^A_m=0$) when functions of type $\calY$ exchange order. It is easy to check that these times are continuous functions of the initial condition $\bZ(0)$. 

It is easy to check that $\bG_\rho \subset \bM$, i.e., all coordinates of all functions in $\bG_\rho$ are Lipschitz with the Lipschitz constant 1.
For fixed $\rho$ and $n_1$, 
the set of initial conditions of functions in $\bG_\rho$ is  compact  in view of \eqref{d2.1} and \eqref{n26.10}. These two observations imply that,
for fixed $\rho$ and $n_1$, the family $\bG_\rho$ is  a compact subset of $\bM$.

\begin{remark}
\label{re:vz}
We will now list a number of properties of functions involved in the definition of $\bZ$. We hope that our remarks will help the reader understand all the steps of the long and complicated definition.

(i) The function $\{\bZ(t), t\geq 0\}$ is completely determined by the initial condition $\bZ(0)$.

\medskip

(ii) We claim that $|D\bZ(t)|$ is uniformly bounded in time. All coordinates of $\bZ(t)$ have speeds bounded above by 1,
by \eqref{ja3.2}-\eqref{ja3.1} and \eqref{n26.16}. Hence, $|D\bZ(t)|\leq n+2$ for all $t$. The sharp upper bound is $3/2$ but we omit the proof because we do not need this sharp bound in our argument.

\medskip

(iii)
If $1\leq j,k\leq n_1$ and $k\neq j$ then $\calY^B_j(t^B_j) < \calY^B_k(t^B_j)$ and  $\calY^C_j(t^C_j) < \calY^C_k(t^C_j)$. It follows that $\calZ_1^B(t) = \calY_j^B(t)$ in a neighborhood of $t_j^B$, and $\calZ_1^C(t) = \calY_j^C$ in a neighborhood of $t^C_j$, for $j=1,\dots, n_1$. 

The following table contains values of some derivatives,  for $j=1,\dots, n_1$.
\begin{table}[h] 
\begin{tabular}{|c|c|c|c|c|c|c|}
\hline
\backslashbox{Function}{Time \Tstrut}
& \Tstrut \Bstrut $t_1^B -$ & $t_1^B +$ & $t_j^B -$ & $t_j^B +$ &$t_j^C -$ & $t_j^C+$  \\
\hline   
 \Tstrut \Bstrut $D  \calZ^B_{0}$ &  $\calV_1^B$ & 0 & $\calV^B_j$ & 0 & 0 & $\calV^B_{j+1}$ \\
\hline 
 \Tstrut \Bstrut $D \calZ^B_1$ & 0 &  $\calV_1^B$ & 0 & $\calV^B_j$ & ? & ?\\
\hline
 \Tstrut \Bstrut $D  \calZ^C_{0}$ &  $\calV_1^B$ &  $\calV_1^C$ & 0 & $\calV_j^C$ & $\calV_j^C$ & 0 \\
\hline
 \Tstrut \Bstrut $D \calZ^C_1$ & 0 & 0 & ? & ? & 0 & $\calV_j^C$\\
\hline
\end{tabular}
\\
{\qquad}\\
\caption{}
\label{tab1}
\end{table}

Question marks in  the table indicate numbers dependent on the initial gaps $\calG^B_j(0)$ and $\calG^C_j(0)$. The explicit expressions for these numbers do not fit in the table. Since these  expressions do not play any   role in our proof, we  omitted them from the table.

\medskip

(iv)
For $j=1,\dots, n_1$,
\begin{align}
\label{d22.01}
D \calZ^B_{0}(t^B_j) &= D  \calZ^B_1(t^B_j-), \\
\label{d22.11}
D \calZ^B_{1}(t^B_j) &= D  \calZ^B_0(t^B_j-), \\
\label{d22.02}
D \calZ^C_{0}(t^B_j) &= D  \calZ^C_{0}(t^B_j-) + \frac12 D  \calZ^B_{0}(t^B_j-) - \frac12 D  \calZ^B_{1}(t^B_j-), \\
\label{d22.03}
D \calZ^C_{0}(t^C_j) &= D  \calZ^C_1(t^C_j-), \\
\label{d22.13}
D \calZ^C_{1}(t^C_j) &= D  \calZ^C_0(t^C_j-), \\
\label{d22.04}
D \calZ^B_{0}(t^C_j) &= D  \calZ^B_{0}(t^C_j-) + \frac12 D  \calZ^C_{0}(t^C_j-) - \frac12 D  \calZ^C_{1}(t^C_j-).
\end{align}

For $k=2,\ldots,m$, the function $D\calZ^A_k$  changes only at $t^A_k$ (jumps from $0$ to $1$), and at $t^A_{k-1}$ (jumps from $1$ to $0$). Thus, for $k=1,\ldots,m-1$,
\begin{equation}
\label{j11.1b}
\begin{aligned}
& D \calZ^A_{k}(t^A_k) = D  \calZ^A_{k+1}(t^A_k-)=1,\\
& D \calZ^A_{k+1}(t^A_{k}) = D  \calZ^A_{k}(t^A_{k}-)=0.
\end{aligned}
\end{equation}

\medskip

(v)
The functions $\calZ^B_0$ and $\calZ^C_0$ can be understood as projections onto $L_1$ and $L_2$, respectively, of the vector function $\calZ^A_0(t):=\frac{2}{\sqrt{3}}\rpar{ \calZ^B_{0}(t) u_2 + \calZ^C_{0}(t) u_1 }$. We have not included $\calZ^A_0$ in the vector $\bZ$ because this function is ``represented'' in $\bZ$ via $\calZ^B_{0}$ and $ \calZ^C_{0}$.

\medskip

(vi)
Discontinuities of $D \bZ$ occur for two reasons: (a) functions $\calY$ exchange their order, and thus, two adjacent functions $\calZ^A_k$ and $\calZ^A_{k+1}$, or $\calZ^B_k$ and $\calZ^B_{k+1}$, or $\calZ^C_k$ and $\calZ^C_{k+1}$ exchange velocities, or (b) the discontinuity occurs at time $t_k^B$ or $t_k^C$. In the latter case, we can use Table \ref{tab1}  to check that the vector functions $\calZ^A_0$, $w_1\calZ_1^B$, and $w_2\calZ^C_1$, representing three equal point masses, satisfy conservation laws for momentum (total velocity) and kinetic energy at each of these times. 

\medskip
 
(vii)
All points $\calY^B_j$ that are moving at time $t^B_k$, do so with a speed larger than that of $\calY^B_k$, i.e., if $D\calY^B_j(t^B_k-)>0$ then $D\calY^B_j(t^B_k-)=D\calY^B_j(t^B_k)>D\calY^B_k(t^B_k)$. It follows that if $j>k$ then  $\calY^B_j$ can meet  $\calY^B_k$ only at a $t>t^B_k$, and so $D \calY^B_j(t-)=0$. This implies that no triplet of functions in the family $\calZ^B_j$, $j\geq 1$ can meet at the same time. For similar reasons,    $\calZ^B_0(t^B_j)=\calZ^B_1(t^B_j)$, but $\calZ^B_2(t^B_j)>\calZ^B_1(t^B_j)$. 
An analogous remark applies to functions in the family $\calY^C_j$, $j\geq 1$, and to $\calZ^C_0,\calZ^C_1$ and $\calZ^C_2$. 

For times $t\leq t^A_1$,  exactly one of the points $\calZ^A_k$, $k=1,\ldots,m$, is moving and all the other components of $\bZ$ are still. It follows that no triplet of functions $\calZ^A_k$, $k=1,\ldots,m$, can be at the same point for $t\geq 0$, since after $t^A_1$ all of them but $\calZ^A_1$ are constant.

We conclude that every discontinuity of $D \bZ$ occurs only at a ``collision time'' of one of the following pairs of functions: 
(a)  $\calZ^A_k$ and $\calZ^A_{k-1}$ for $k=2,\ldots,m$; 
(b) either $\calZ^{B}_{0}$ or $\calZ^{C}_{0}$ with either $\calZ^{B}_{1}$ or $\calZ^{C}_{1}$; 
(c)  $\calZ^{B}_k$ and $\calZ^{B}_{k+1}$ for some $1\leq k < n_1$; or 
(d)  $\calZ^{C}_k$ and $\calZ^{C}_{k+1}$ for some $1\leq k < n_1$. 
Thus no simultaneous collisions occur.

\end{remark}

In the next proposition, we will consider a sequence $\kpar{\bS^k}$ of function families, as in \eqref{f10.1}, and corresponding positive numbers $\kpar{\eps_k}$. We will add the index $k$ to the corresponding functions, as in $a_j^k, b_j^k, c_j^k$ and $\calX^{A,\eps_k}_j,\calX^{B,\eps_k}_j,\calX^{C,\eps_k}_j$.

\begin{proposition}
\label{pr:s1}
Fix $n_1,m,\rho >0$, and let $\kpar{\eps_k}$ be a sequence of positive numbers converging to zero. Let $\kpar{\bS^k}$ be a sequence of families of $2n_1+m$ functions, as in \eqref{f10.1}, such that $\bS^k$ satisfies $(\eps_k,\rho)$ initial conditions at $t_0=0$ (see Definition \ref{de:ic}). Suppose  that $|D \bS^k(0)|=1$ for all $k$, and that $\lim_{k\to \infty} D  a_m^k(0) =w_0 = (0,1)$. Let $\bX^{\eps_k}$ be the associated vector of scaled point masses as in \eqref{eq:bX}, and  assume that $\bX^{\eps_k}(0)$ converge to  $\bX_0$ as $k\to\infty$. Let $\bZ$ be as in \eqref{eq:nz}, with $\bZ(0) = \bX_0$. 
Then $\bZ(t)$ is well defined for all $t\geq 0$, and
  $D {\bX}^{\eps_k}$ converge to $D \bZ$  in the Skorohod space $\calD[0,T]$ for every $T>0$, as $k\to \infty$.
\end{proposition}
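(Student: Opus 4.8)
The plan is to track the collision structure of the rescaled disc systems $\bX^{\eps_k}$ event by event and to show that it converges to the explicit event structure built into $\bZ$. Two uniform a priori bounds drive the argument. First, the speed bound $|D\bX^{\eps_k}(t)|\le 3|D\bS^k(0)|=3$ from \eqref{f10.3} makes $\{\bX^{\eps_k}\}$ uniformly Lipschitz, so by Arzel\`a--Ascoli every subsequence has a uniform-on-compacts limit, and all such limits start at $\bX_0$. Second, a system of $2n_1+m$ equal discs in $\R^2$ admits at most $K(2n_1+m,2)<\infty$ collisions, so the number of jumps of $D\bX^{\eps_k}$ is bounded uniformly in $k$; since both $D\bZ$ and each $D\bX^{\eps_k}$ are then piecewise constant with a uniformly bounded number of jumps, the desired Skorohod convergence reduces to convergence of finitely many jump times together with the constant values between them.

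First I would check that $\bZ$ is well defined. Passing to the limit in the $(\eps_k,\rho)$ initial conditions of Definition \ref{de:ic}, the normalizations \eqref{eq:ic_gap} and \eqref{eq:ic_u} force $\bX_0$ to satisfy \eqref{d2.1}, \eqref{n26.10}, and (using the either/or in \eqref{eq:ic_ratio}, and relabelling $B\leftrightarrow C$ by the obvious symmetry if necessary) the ratio inequality \eqref{j5.01}. Hence $\bX_0$ is an admissible initial point and the construction of Section \ref{de:z} produces $\bZ$ with $\bZ(0)=\bX_0$; its breakpoints $0=s_0<s_1<\dots<s_N$ (the times $t^A_k,t^B_j,t^C_j$ together with the reordering times) depend continuously on $\bX_0$, and by Remark \ref{re:vz}(vii) no three trajectories of $\bZ$ meet at once, so consecutive breakpoints are strictly separated.

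The core is an induction over these breakpoints. The inductive claim is that there exist collision times $s_i^{(k)}\to s_i$ of $\bX^{\eps_k}$ with $\bX^{\eps_k}(s_i^{(k)})\to\bZ(s_i)$ and $D\bX^{\eps_k}(s_i^{(k)}+)\to D\bZ(s_i+)$, and moreover that the orthogonal displacements $P_{u_0}a^k_j$, $P_{u_1}b^k_j$, $P_{u_2}c^k_j$ remain $O(\eps_k)$ up to time $s_i$. The base case is immediate from $\bX^{\eps_k}(0)\to\bX_0$, $Da^k_m(0)\to w_0$ and $|D\bS^k(0)|=1$, which forces every other initial velocity to vanish. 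For the step, on the open interval $(s_i,s_{i+1})$ all disc velocities are constant, so the rescaled coordinates move linearly with the limiting slopes; since the limiting gaps stay bounded away from $0$ there, for large $k$ no disc collision occurs in the interior and the next disc collision time converges to $s_{i+1}$. At $s_{i+1}$ I identify the transformation: for an $A$--$A$, $B$--$B$, or $C$--$C$ contact the collision is one-dimensional along the relevant $w$-axis (here the $O(\eps_k)$ control of the orthogonal components shows the centers are asymptotically aligned), so the projected velocities simply swap, reproducing \eqref{j11.1b} and the reflecting-point rule; for a contact of $A_1$ with $B_1$ or $C_1$ the two-dimensional law \eqref{oc2.3}--\eqref{oc4.2} with $\bx\to -2w_1$ (resp.\ $-2w_2$) converges to the projection identities \eqref{ja3.2}--\eqref{ja3.1}, i.e.\ to the transformations \eqref{d22.01}--\eqref{d22.04} recorded in Table \ref{tab1}. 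Propagating convergence of positions and velocities across $s_{i+1}$ closes the induction.

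The main obstacle is precisely the step flagged by the pinned (simultaneous) configuration: controlling the family near the times $t^B_j,t^C_j$, where in the unrescaled picture $A_1$, $B_1$ and $B_2$ (resp.\ $C_1,C_2$) are within $O(\eps_k)$ of one another and continuity of billiard trajectories in the initial data fails. The rescaling separates these contacts into order-one intervals, but one must still prove that for large $k$ the contact of $A_1$ with $B_1$ genuinely precedes the contact of $B_1$ with $B_2$ and that no spurious contact intervenes; this is where the ratio condition \eqref{j5.01}, the gap normalization \eqref{eq:ic_gap}, and the $O(\eps_k)$ bound on the orthogonal drift are used together to pin down the sign and ordering of successive collision times and to force the two-dimensional geometry at $A_1$ to degenerate to the one-dimensional projections in the limit. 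Once the ordering is secured and the orthogonal drift is shown to stay negligible, the velocity bookkeeping is routine. Having matched the finitely many jump times $s_i^{(k)}\to s_i$ and the constant velocity values between them, I would finally build a piecewise-linear time change $\lambda_k\in\Lambda_T$ with $\lambda_k(s_i)=s_i^{(k)}$ to conclude $D\bX^{\eps_k}\to D\bZ$ in $\dist^T$, hence in the equivalent metric $\dist_0^T$ on $\calD[0,T]$, for every $T>0$.
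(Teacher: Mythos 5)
Your proposal is correct and follows essentially the same route as the paper: verify that the limiting initial data satisfy \eqref{j5.01}, \eqref{d2.1} and \eqref{n26.10} so that $\bZ$ is well defined, then propagate convergence across the finitely many discontinuities of $D\bZ$ by showing that each is matched by a unique nearby collision of the disc system, that no spurious collisions occur in between, and that the collision law degenerates to the one-dimensional exchange/projection rules up to $O(\eps_k(1+t))$ errors controlled by the orthogonal drift. The only organizational difference is that you run a forward induction over the breakpoints (aided by the a priori bound $K(2n_1+m,2)<\infty$ on the number of collisions, which the paper does not need), whereas the paper argues by contradiction with the supremal time $T^*$ up to which convergence holds and invokes Lemmas \ref{le:extension}, \ref{le:zero}, \ref{le:velpm} and \ref{le:gap_neg} to push past $T^*$; the two are equivalent in substance.
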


\begin{proof}
It is straightforward to check that $(\eps_k,\rho)$ initial conditions for $\bS^k$ imply that $\bX_0$ satisfies conditions \eqref{d2.1} and \eqref{n26.10}. To see \eqref{j5.01}, note that convergence of $\bX^{\eps_k}(0)$ implies that
\begin{align}\label{f12.2}
\frac{w_1\cdot(b^{k}_1(0) - a^{k}_1(0))-2}{w_2\cdot(c^{k}_1(0) - a^{k}_1(0))-2} = \frac{\calX^{B,\eps_k}_1(0)-\calX^{B,\eps_k}_0(0)}{\calX^{C,\eps_k}_1(0)-\calX^{C,\eps_k}_0(0)}
\end{align}
converges as $k\to 0$. By \eqref{eq:ic_ratio}, each of these fractions is either smaller than or equal to $2/3$, or larger than or equal to $3/2$. Convergence implies that the first of the inequalities holds for all large enough $k$, or the second does. Without loss of generality (by switching labels $B$ and $C$ if necessary) we can assume that, eventually, the fractions in \eqref{f12.2} are less than or equal to $2/3$, from which \eqref{j5.01} follows. 
Recall from Remark \ref{re:vz} (i) that $\bZ(t)$ is then well defined for all $t\geq 0$.

Let $T^*$ be the supremum of all $T>0$ such that $D  \bX^{\eps_k}$ converge to $D  \bZ$ in the Skorohod space $\calD[0,T]$. Assume that $T^*<\infty$. We divide the remaining part of the proof into three steps. In the first one, we show that $T^* >0$. In the second step, we show that some component of   $D \bZ$ must be discontinuous at $T^*$. In the third step we show that no component of $D \bZ$ can be discontinuous at time $T^*$.

\begin{step}
In Remark \ref{re:vz} (vii), we argued that at each discontinuity of $D \bZ$, exactly two functions of type $\calZ$ meet. Since the first time this occurs is at $t^A_{m-1} = \calG^A_{m}(0)\geq \rho^{-1}$, it follows that $D \bZ$ is constant in $[0,\rho^{-1})$.  On the other hand, discontinuities of $D \bX^{\eps_k}$ occur only at collisions between discs, and because $(\eps_k,\rho)$ initial conditions hold, each pair of discs are initially at a distance no smaller than $\eps_k\rho^{-1}$. Each disc has speed bounded above by $1$, hence it takes at least $\eps_k\rho^{-1}/2$ units of time for two discs to collide. This shows that $D \bX^{\eps_k}(t) = D \bX^{\eps_k}(0)$  for  $t\in [0,\rho^{-1}/2)$. Our assumption on the initial velocities and equations \eqref{eq:dA}-\eqref{eq:dC} show that $D \bX^{\eps_k}(0)$ converge to $(1,0,\ldots,0)$. According to definitions in Section \ref{de:z}, this is the same as $D \bZ(0)$. It follows that $D \bX^{\eps_k} \to D  \bZ$ uniformly in $[0,\rho^{-1}/2)$, which implies that $T^*\geq\rho^{-1}/2>0$.
 \end{step}

\begin{step}
Let $1\leq j\leq n_1$ be fixed. If $D \calZ^B_j$ is continuous at $T^*$, then for some $\alpha>0$, we have $\calZ^B_{j+1}(T^*) -4\alpha > \calZ^B_{j}(T^*)>\calZ^B_{j-1}(T^*)+4\alpha$, and also $D \calZ^B_j(t)$ is constant in $[T^*-\alpha, T^*+\alpha]$.  By Lemma \ref{le:sk_unif},  for large enough $k$, we have $\calX^{B,\eps_k}_{j+1}(T^*) -3\alpha > \calX^{B,\eps_k}_{j}(T^*)>\calX^{B,\eps_k}_{j-1}(T^*)+3\alpha$. Since $|D \calX^{B,\eps_k}_i|$ is bounded by 1 for all $k$ and $i$, for all $t\in[T^*- \alpha,T^* + \alpha]$ and large $k$, we have 
$\calX^{B,\eps_k}_{j+1}(t) -\alpha > \calX^{B,\eps_k}_{j}(t)>\calX^{B,\eps_k}_{j-1}(t)+\alpha$, which implies that $B_j$ can hit neither $B_{j+1}$ nor $B_{j-1}$ in $[\eps_k(T^*-\alpha),\eps_k(T^*+\alpha)]$. 
The discs satisfy the initial conditions $(\eps_k,\rho)$ and $\eps_k\to0$ so for large $k$, the only discs whose centers are within distance $2\sqrt{2}$ from the center of $B_j$ are $B_{j+1}$ and $B_{j-1}$.
The speeds of the discs are bounded by 1 so, for large $k$ and  $t\in[0, \eps_k(T^*+\alpha)]$, the center of $B_j$ and the center of every other disc, except $B_{j+1}$ and $B_{j-1}$, are at a distance exceeding $2\sqrt{2} -0.1$.
Thus 
 $B_j$ does not take part in any collision in $[\eps_k(T^*-\alpha),\eps_k(T^*+\alpha)]$, and so $D \calX^{B,\eps_k}_j$ is constant in this interval for all large enough $k$.   Corollary \ref{co:extension} implies that $D \calX^{B, \eps_k}_j$ converge to $D \calZ^B_j$ in $\calD[0,T^*+\alpha]$.
 The same argument applies if $D \calZ^A_j$, with $j\geq 2$, or $D \calZ^C_j$, with $j\geq 1$, are continuous at $T^*$. Under these assumptions, there is $\alpha >0$ such that $D \calX^{A,\eps_k}_j $ converge to $ D \calZ^A_j$, and  $D  \calX^{C,\eps_k}_j $ converge to $D  \calZ^C_j$ in $\calD[0,T^*+\alpha]$, respectively.

The continuity times of $D \calZ^B_0$, $D \calZ^C_0$, and $D \calZ^A_1$ are the same. If these functions are continuous at $T^*$ then there must exist $\alpha >0$ such that $\calZ^A_1(T^*) >  \calZ^A_2(T^*) +4\alpha$, $\calZ^B_1(T^*) >  \calZ^B_0(T^*) +4\alpha$, and $\calZ^C_1(T^*) >  \calZ^C_0(T^*) +4\alpha$. The argument continues as in the previous paragraph so that we can conclude that for large enough $k$, the functions $D \calX^{A,\eps_k}_1$, $D \calX^{B,\eps_k}_0$, and $D \calX^{C,\eps_k}_0$ are constant in $[T^*- \alpha,T^* + \alpha]$, and they converge respectively to $D \calZ^{A}_1$, $D \calZ^{B}_0$, and $D \calZ^{C}_0$ in $\calD[0,T^*+\alpha]$.

If $T^*$ is a continuity point of $D \bZ$, then the above discussion  applies to all components of $\bZ$. It follows that $D \bX^{\eps_k}$ converge to $D \bZ$ in $\calD[0,T^*+\alpha]$, for some $\alpha >0$, a contradiction with the definition of $T^*$.

\end{step}

\begin{step}
Assume that $D \calZ^B_j$ is discontinuous at $T^*$ for some $0\leq j\leq n_1$. By Remark \ref{re:vz} (vii), we must have $\calZ^B_j(T^*)=\calZ^B_{i}(T^*)$, for either $i=j-1$ or $i=j+1$. Without loss of generality, assume that $i=j+1$; otherwise, replace  $j$ with $j-1$. Note that $D \calZ^B_{j+1}$ must be discontinuous at $T^*$. Also, if $j=0$, $D \calZ^C_0$ must be discontinuous at $T^*$.

By Lemma \ref{le:zero}, there exist
 $\delta_0>0$ and a unique $T_k\in [T^*-\delta_0,T^*+\delta_0]$ such that $D \calX^{B,\eps_k}_j$ and $D \calX^{B,\eps_k}_{j+1}$ are discontinuous at $T_k$. The time $T_k$ is the same for both functions because of conservation of momentum at collision times.
We make $\delta_0>0$ smaller, if necessary, so that $D\bZ$ is constant in $[T^*-2\delta_0, T^*)$ and $(T^*, T^*+2\delta_0]$.

Consider the case
 $j=0$. Then $T^*=t^B_i$ for some $i\geq 1$. Subtracting equations \eqref{d22.01} and \eqref{eq:velpm01} (with $j=0$ and $t=T_k$) we get
\begin{align*}
D  \calZ^ B_0(T^*) - D  \calX^{B,\eps_k}_0(T_k)  &= D  \calZ^B_{1}(T^*-) - D  \calX^{B,\eps_k}_1(T_k-) + O(\eps_k(1+T^*)).
\end{align*}
We obtain  $D \calX^{B,\eps_k}_0\to D \calZ^B_0$ in $\calD[0,T^*+\delta_0]$ by
applying Lemma \ref{le:extension} 
with $\delta = \delta_0$, $T=T^*$, $t_k = T_k$, $f= \calZ^ B_0$, $f_k = \calX^{B,\eps_k}_0$, $g= \calZ^B_{1}$, $g_k = \calX^{B,\eps_k}_1$ and $\alpha_k = c \eps_k(1+T^*)$.

Subtracting equations \eqref{d22.11} and \eqref{eq:velpm03} (with $j=0$ and $t=T_k$) we get
\begin{align*}
D  \calZ^ B_1(T^*) - D  \calX^{B,\eps_k}_1(T_k)  &= D  \calZ^B_{0}(T^*-) - D  \calX^{B,\eps_k}_0(T_k-) + O(\eps_k(1+T^*)).
\end{align*}
We obtain  $D \calX^{B,\eps_k}_1\to D \calZ^B_1$ in $\calD[0,T^*+\delta_0]$ by
applying Lemma \ref{le:extension} 
with $\delta = \delta_0$, $T=T^*$, $t_k = T_k$, $f= \calZ^ B_1$, $f_k = \calX^{B,\eps_k}_1$, $g= \calZ^B_0$, $g_k = \calX^{B,\eps_k}_0$ and $\alpha_k = c \eps_k(1+T^*)$.

Subtracting equations \eqref{d22.02} and \eqref{eq:velpm05} at $t=T_k$, we get
\begin{align*}
D  \calZ^C_0 (T^*) - D  \calX^{C,\eps_k}_0(T_k) 
&= 
O(\eps_k(1+T^*)) + D  \calZ^C_{0}(T^*-) - D \calX^{C,\eps_k}_0(T_k-) + \\
&\qquad + \frac12 D  \calZ^B_{0}(T^*-)  -\frac12 D \calX^{B,\eps_k}_0(T_k-) + \\
&\qquad  - \frac12 D  \calZ^B_1(T^*-) +\frac12 D \calX^{B,\eps_k}_1(T_k-).
\end{align*}
We obtain $D \calX^{C,\eps_k}_0\to D \calZ^C_0$ in $\calD[0,T^*+\delta_0]$ by
applying Lemma \ref{le:extension} 
with $\delta = \delta_0$, $T=T^*$, $t_k = T_k$, $f= \calZ^C_0$, $f_k = \calX^{C,\eps_k}_0$, $g= (\calZ^B_0-\calZ^B_1)/2$, $g_k = (\calX^{B,\eps_k}_0-\calX^{B,\eps_k}_1)/2$ and $\alpha_k = c \eps_k(1+T^*)$.

If $j\geq 1$, then the discontinuities of $D \calZ^B_j$ and $D \calZ^B_{j+1}$ occur because the functions of type $\calY$ associated to $\calZ^B_j$ and $\calZ^B_{j+1}$ exchange order; thus, $D  \calZ^B_j(T^*) = D  \calZ^B_{j+1}(T^*-)$ and $D  \calZ^B_{j+1}(T^*) = D  \calZ^B_j(T^*-)$. Subtracting these equations respectively from \eqref{eq:velpm01} and \eqref{eq:velpm03} we obtain
\begin{align*}
D  \calZ^ B_j(T^*) - D  \calX^{B,\eps_k}_j(T_k)  &= D  \calZ^B_{j+1}(T^*-) - D  \calX^{B,\eps_k}_{j+1}(T_k-) + O(\eps_k(1+T^*)), \\
D  \calZ^ B_{j+1}(T^*) - D  \calX^{B,\eps_k}_{j+1}(T_k)  &= D  \calZ^B_{j}(T^*-) - D  \calX^{B,\eps_k}_{j}(T_k-) + O(\eps_k(1+T^*)).
\end{align*}
\end{step}\noindent Applying Lemma \ref{le:extension} once again, we obtain that $D \calX^{B,\eps_k}_j\to D \calZ^B_j$ and $D \calX^{B,\eps_k}_{j+1}\to D \calZ^B_{j+1}$ in $\calD[0,T^*+\alpha]$ for some $\alpha>0$.

A completely analogous argument applies in the cases when
 $D  \calZ^C_j$ is discontinuous at $T^*$, or $D  \calZ^A_j$ is discontinuous at $T^*$ for some $j\geq 2$. Finally, the argument in
the case when $D  \calZ^A_1$ is discontinuous at $T^*$ can be based
on the observation that
$D  \calZ^A_1 = D  \calZ^B_0 + D  \calZ^C_0$ and  $\calX^{A,\eps_k}_1 = \calX^{B,\eps_k}_0 + \calX^{C,\eps_k}_0$.

For all $F=A,B,C$ and all $j$, we obtain $D \calX^{F,\eps_k}_{j}\to D \calZ^F_{j}$ in $\calD[0,T^*+\alpha]$ for some $\alpha>0$,
which contradicts the definition of $T^*$, unless $T^*=\infty$.
\end{proof}

The last discontinuity of $D \calZ^A_1$ is at time $t^C_{n_1}$. We have by \eqref{n26.12}-\eqref{n26.15},
\begin{align}
\label{eq:tc}
t^C_{n_1} &= \sum_{k=2}^m \calG^A_k(0) + \sum_{k=1}^{n_1} 
\left(\frac{2^{2k-1}}{3} \calG^B_k(0) + \frac{2^{2k}}{3} \calG^C_{k}(0)\right).
\end{align}
In view of \eqref{d2.1} and \eqref{n26.10}, it follows that 
\begin{align}
\label{eq:tc_bds}
\frac1{\rho}\rpar{m-1 +  \frac{2^{2n_1+1}-2}{3}} < 
t^C_{n_1}  
< m-1+\frac{2^{2n_1+1}}{3}.
\end{align}

Recall that given $n$, the total number of discs, we define $n_1$ and $n_2$ by $n_1 = \lfloor n/3 \rfloor$, $n_2 = n - 2 n_1$.

\begin{proposition}
\label{th:epsT}
Let $T=n_2+2^{2n_1}$ and fix $\rho\geq 2$. There is $\eps_* =\eps_*(\rho)>0$ such that for any $\eps  \in (0,\eps_*)$,  any $\lambda>0$, and any $1\leq m\leq n_2$,  the following holds: If $\bS$ is a family of $2n_1+m$ functions satisfying $(\eps,\rho)$ initial conditions at time $t_0$, $|D \bS(t_0)|=\lambda$ and $|D  a_m(t_0)-\lambda w_0|\leq\lambda\eps$, then $\bS$ undergoes at least $m-1+n_1(n_1+1)$ collisions in $[t_0,t_0+\eps \lambda^{-1} T)$, and $\bS$ satisfies $(\eps(1+2T),\rho(1+3T))$ initial conditions at time $t_0+\eps \lambda^{-1}T$.
\end{proposition}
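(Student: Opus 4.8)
The plan is to deduce this quantitative statement from the qualitative convergence in Proposition~\ref{pr:s1} by a compactness argument. First I would reduce to $t_0=0$, $\lambda=1$. Elastic dynamics is invariant under the time change $\bS(\cdot)\mapsto\bS(t_0+\lambda^{-1}\cdot)$, which scales all velocities by $\lambda^{-1}$ and fixes every position; since conditions (i)--(iv) of Definition~\ref{de:ic} are statements about positions only, they are preserved, while $|D a_m(t_0)-\lambda w_0|\le\lambda\eps$ becomes $|D a_m(0)-w_0|\le\eps$, the speed becomes $1$, and the window $\eps\lambda^{-1}T$ becomes $\eps T$. Now suppose the proposition fails for some fixed $\rho\ge2$: then there are $\eps_k\downarrow0$ and families $\bS^k$ satisfying $(\eps_k,\rho)$ initial conditions at $0$ with $|D\bS^k(0)|=1$ and $|D a^k_m(0)-w_0|\le\eps_k$, each violating the conclusion. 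The $(\eps_k,\rho)$ conditions force the rescaled gaps $\eps_k^{-1}\Gamma(0)$ into $[\rho^{-1},1]$ and keep $\bX^{\eps_k}(0)$ in a fixed compact set, while $|D a^k_m(0)-w_0|\le\eps_k$ together with $|D\bS^k(0)|=1$ forces $D a^k_m(0)\to w_0$ and all other initial velocities to $0$. Passing to a subsequence with $\bX^{\eps_k}(0)\to\bX_0$, Proposition~\ref{pr:s1} gives $D\bX^{\eps_k}\to D\bZ$ in $\calD[0,T]$ with $\bZ(0)=\bX_0$.

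For the collision count I would use the explicit jump structure of $\bZ$. By Remark~\ref{re:vz}(vii) the discontinuities of $D\bZ$ occur at distinct times: the $m-1$ times $t^A_{m-1},\dots,t^A_1$ and the $n_1(n_1+1)$ crossing times among $\calZ^B_0,\dots,\calZ^B_{n_1}$ and $\calZ^C_0,\dots,\calZ^C_{n_1}$, all bounded by $t^C_{n_1}$, which by \eqref{eq:tc_bds} with $m\le n_2$ and $T=n_2+2^{2n_1}$ satisfies $t^C_{n_1}<T$. Since $J_1$-convergence matches every jump of the limit by a nearby jump of $D\bX^{\eps_k}$ and distinct limit jump times stay separated for large $k$, the system $\bS^k$ has at least $m-1+n_1(n_1+1)$ collisions in $[0,\eps_kT)$ once $k$ is large, which excludes the first failure mode.

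To verify the final initial conditions I would compute $\bZ(T)$ explicitly from \eqref{eq:ya}--\eqref{j11.4} and transfer it through $\bX^{\eps_k}(T)\to\bZ(T)$ (obtained by integrating the derivative convergence from the convergent initial data, using that $T>t^C_{n_1}$ is a continuity point) and the identity $\Gamma^B_j(\eps_kt)=\eps_k\big(\calX^{B,\eps_k}_j(t)-\calX^{B,\eps_k}_{j-1}(t)\big)$ from \eqref{eq:XB} and its analogues. The uniform coordinatewise speed bound $1$ then gives (i) and the upper bound in (iii): each rescaled coordinate moves by at most $T$ and each rescaled gap changes by at most $2T$, producing the factor $(1+2T)$ in $\eps'=\eps_k(1+2T)$. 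Condition (ii) is not seen by the $w$-projections in $\bZ$, so I would handle it separately by a perturbative estimate: the collisions of each disc are with neighbours whose centres lie, up to $O(\eps_k)$, along the relevant direction $w_0$, $w_1$ or $w_2$, so each collision changes the orthogonal velocity component by only $O(\eps_k)$; over the finitely many collisions the orthogonal displacements $P_{u_0}a_k$, $P_{u_1}b_j$, $P_{u_2}c_j$ stay $O(\eps_k)$ and hence below $\eps'$.

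The main obstacle is the lower bound in (iii) and the ratio (iv), which are not apparent from crude estimates because gaps vanish at collisions. Here I would exploit that in $\bZ$ all collisions are over by $t^C_{n_1}$ with a definite margin $T-t^C_{n_1}>1+2^{2n_1}/3$ from \eqref{eq:tc_bds}, after which the sorted trajectories move at the distinct terminal velocities \eqref{n26.16}: the $A$-gaps $\Gamma^A_k$ with $k\ge3$ are frozen at $\calG^A_k(0)\ge\rho^{-1}$, $\Gamma^A_2$ only grows, and every $B$- and $C$-gap increases at rate at least $3/2^{2n_1}$, so multiplying by the margin shows each limiting $B$- and $C$-gap at $T$ exceeds $1$. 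Thus the rescaled minimal gap at $T$ is at least $\rho^{-1}>\tfrac{1+2T}{\rho(1+3T)}$, which is exactly the lower bound demanded with $\rho'=\rho(1+3T)$, the slack $(1+3T)$ over $(1+2T)$ absorbing the convergence error for finite $k$. For (iv), after the normalization \eqref{j5.01} permitted by the $B/C$ symmetry in Proposition~\ref{pr:s1}, I would compute $\Gamma^B_1(T)=\calZ^B_1(T)-\calZ^B_0(T)$ and $\Gamma^C_1(T)=\calZ^C_1(T)-\calZ^C_0(T)$ from \eqref{ja8.3}--\eqref{ja8.11} and \eqref{n26.16}: the leading growth rates are in ratio $9:6$, while $\Gamma^B_1$ carries an extra positive term $2\calG^C_{n_1}(0)$, so $\Gamma^C_1(T)/\Gamma^B_1(T)<2/3$ strictly with a gap controlled by $\rho^{-1}$. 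All these strict limiting inequalities then pass to $\bX^{\eps_k}$, hence to $\bS^k$, for large $k$, contradicting the second failure mode and completing the proof.
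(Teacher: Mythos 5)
Your proposal is correct and follows essentially the same route as the paper: reduce to $t_0=0$, $\lambda=1$ by time translation and scaling, argue by contradiction along a sequence $\eps_k\downarrow 0$ using the compactness of the rescaled initial data and Proposition \ref{pr:s1}, count the $m-1+n_1(n_1+1)$ jumps of $D\bZ$ and transfer them to $D\bX^{\eps_k}$, and verify the new $(\eps(1+2T),\rho(1+3T))$ conditions by computing $\bZ(T)$ explicitly (including the $9{:}6$ growth-rate ratio with the extra $2\calG^C_{n_1}(0)$ term for condition (iv) and the $(1+2T)/(1+3T)$ slack absorbing the convergence error). The only step you assert rather than verify is that all $n_1(n_1-1)$ crossings among the $\calY^B$'s and $\calY^C$'s actually occur before $t^C_{n_1}$; the paper establishes this full order reversal by the explicit timing inequality $t^B_{j+1}-t^B_j\geq\frac{2^{2j+1}}{3}\calG^B_{j+1}(0)>\frac{2^{2j-1}}{3}\calG^B_{j+1}(0)$, a routine computation that should be included.
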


\begin{proof}
By a time translation, we can assume that $t_0=0$. 

We will show that it is sufficient to prove the proposition for $\lambda=1$.
Assume that the proposition is true for $\lambda=1$. Let $\bS$ be as in the statement of the proposition and let $\widetilde\bS(t) = \bS(\lambda^{-1}t)$. It is straightforward to check that $\widetilde\bS$ satisfies the conditions in the proposition with $\lambda=1$, and thus $\widetilde\bS$ undergoes at least $m-1+n_1(n_1+1)$ collisions in $[0,\eps T)$, and satisfies $(\eps(1+2T),\rho(1+3T))$ initial conditions at time $\eps T$. Let $\widetilde\bX$ and $\bX$ be the vectors of point masses associated to $\widetilde\bS$ and $\bS$, resp., as in \eqref{eq:bX}. It is easy to check that $\widetilde\bX(t) = \bX(\lambda^{-1}t)$. This shows that the proposition holds for any $\lambda>0$  by time scaling. It is evident from this argument that the value of $\eps_*$ which works for $\lambda=1$ also works for any $\lambda>0$, and so $\eps_*$ is independent of $\lambda$.

Assume that the proposition is false for $\lambda=1$. Then there is a sequence $\eps_k\to0$, a sequence of families $\bS^k$ of $2n_1+m$ functions satisfying $(\eps_k,\rho)$ initial conditions at time zero, with  $|D \bS^k(0)|=1$ and $|D a_m(0)-w_0|\leq\eps_k$, such that $\bS^k$ undergoes fewer than $m-1+n_1(n_1+1)$ collisions by time $\eps_kT$, or $(\eps_k(1+2T),\rho(1+3T))$  conditions at time $\eps_k T$ do not hold.

Let $\bX^k$ be the vector of point masses associated to $\bS^k$. It follows from Definition \ref{de:ic} (iii) that $| \bX^{k}(0)|$ is uniformly bounded. Hence, passing to a subsequence if necessary, we can assume that $ \bX^{k}(0)$ converge to some $\bX_0$. By Proposition \ref{pr:s1}, if $\bZ$ is as in \eqref{eq:nz} with $\bZ(0)=\bX_0$ then $D \bX^k$ converge to $D \bZ$ in $\calD[0,T]$. 

The function $D \bZ$ has a discontinuity at each of the times in \eqref{n26.12} (except for $t^A_m=0$), which accounts for $m-1 + 2n_1$ discontinuities. 

We will argue that functions $\calY^B_j$ reverse their order by time $t^C_{n_1}$. It follows from \eqref{f14.1} and \eqref{eq:yb} that at time 0, their order is $\calY^B_1(0) < \calY^B_2(0) < \dots < \calY^B_{n_1}(0)$.
Consider $j\geq 2$.
At time $t^B_j$, the derivative of $\calY^B_j$ becomes $ \calV^{B}_{j} = 3/2^{2j-1}$. At this time, the distance between $\calY^B_j$ and $\calY^B_{j+1}$ is $\calG^B_{j+1}$. Hence,  $\calY^B_j$ will take the value  $\calY^B_{j+1}(t^B_j)$ at time $t_*:=t^B_j + (2^{2j-1}/3)\calG^B_{j+1}$. 
By \eqref{n26.12} and \eqref{n26.14},
\begin{align*}
t^B_{j+1} = t^B_{j+1} - t^B_{j} + t^B_{j}\geq
t^B_{j+1} - t^C_{j}+ t^B_{j} &= \frac{2^{2j+1}}{3} \calG^{B}_{j+1}(0)+ t^B_{j}  > t_*.
\end{align*}
This means that $\calY^B_j$ will cross  $\calY^B_{j+1}$ before the latter starts moving. After $\calY^B_{j+1}$ starts moving, $\calY^B_j$ will have a greater derivative than that of $\calY^B_{j+1}$. This implies that for every $i>j+1$, $\calY^B_j$ will cross  $\calY^B_{i}$ before $\calY^B_{j+1}$ does.
The proof for $j=1$ is similar. All of these observations imply that the order of the functions $\calY^B$ will be reversed by time $t^C_{n_1}$, which is  possible only if $n_1(n_1-1)/2$ crossings occur between these functions.
A similar reasoning applies to functions of type $\calY^C$.
We have accounted for a total of $m-1+2n_1 + n_1(n_1-1)/2 + n_1(n_1-1)/2 = m-1 + n_1(n_1+1)$ discontinuities.

 Lemma \ref{le:zero} shows that every discontinuity of $D \bZ$ is associated to a unique discontinuity of $D \bX^k$ for $k$ large enough, and thus, $\bS^k$ undergoes at least $m-1 + n_1(n_1+1)$ collisions by time $\eps_k T$, since $T>t^C_{n_1}$ by \eqref{eq:tc_bds}.

It remains to show that $(\eps_k(1+2T),\rho(1+3T))$ initial conditions hold at time $\eps_k T$ for large $k$. We start with some estimates for $\bZ(T)$. We have already pointed out that $\calY^B_{j}(t^C_{n_1}) \geq \calY^B_{j+1}(t^C_{n_1})$ for $j=1,\ldots,n_1-1$. For $j=1,\ldots,n_1$, 
\begin{align}
\label{eq:ybtc}
\calZ^B_{j}(t) = \calY^B_{n_1-j+1}(t) = \calZ^B_{j}(t^B_{n_1}) + \calV^B_{n_1-j+1}\cdot (t-t^B_{n_1})\qquad\text{for } t\geq t^B_{n_1}.
\end{align}
Similarly, for $j=1,\ldots,n_1$, 
\begin{align}
\label{eq:yctc}
\calZ^C_{j}(t) = \calY^C_{n_1-j+1}(t) = \calZ^C_{j}(t^C_{n_1}) + \calV^C_{n_1-j+1} \cdot (t-t^C_{n_1})\qquad\text{for } t\geq t^C_{n_1}.
\end{align}
These formulas and \eqref{n26.16} imply that for $t\geq t^C_{n_1}$ and $j=1,\ldots,n_1$,  functions $\calZ^B_j - \calZ^B_{j-1}$ and $\calZ^C_j - \calZ^C_{j-1}$  have derivatives larger than $D (\calZ^C_1-\calZ^C_0)(t^C_{n_1})=3/2^{2n_1}$. Since $D  \calZ^A_j(t)=0$ for $t\geq t^A_1$ and $j\geq 2$, we conclude that $D \bZ$ is constant for $t>t^C_{n_1}$. This and the fact that $T-t^C_{n_1}\geq 2^{2n_1}/3$ for $m\leq n_2$ (see  \eqref{eq:tc_bds}), allow us to obtain the following bound for $j=1,\ldots,n_1$,
\begin{equation}
\label{j23.1}
\begin{aligned}
\rpar{\calZ^B_j(T) - \calZ^B_{j-1}(T)} \wedge \rpar{\calZ^C_j(T) - \calZ^C_{j-1}(T)} &\geq \frac{3}{2^{2n_1}} ( T - t^C_{n_1}) \geq 1 .
\end{aligned}
\end{equation}

It follows from \eqref{eq:ya}-\eqref{j11.1a} that 
$\calZ^A_2(T) = \calZ^A_2(t^A_1)= \calZ^A_1(0)$.
Recall from \eqref{j11.4} that $\calZ^A_1(t) = \calZ^B_0(t) + \calZ^C_0(t)$ for $ t\geq 0$.
It is easy to see that $\calZ^B_0(t) \geq \calZ^B_0(s)$ and $\calZ^C_0(t)\geq \calZ^C_0(s)$ for $t\geq s$. 
We have 
$\calZ^B_{0}(t)= \calZ^B_1(t^B_{n_1}) + 3 (1/2)^{2n_1+1} \cdot  (t-t^C_{n_1}) $  for  $t\in[t^C_{n_1}, \infty)$, by \eqref{ja8.6}. These remarks and \eqref{eq:tc_bds} imply that
\begin{equation}
\label{j23.2}
\begin{aligned}
\calZ^A_1(T) - \calZ^A_2(T) &=  \calZ^A_1(T) - \calZ^A_1(0) = \calZ^B_0(T) - \calZ^B_0(0)+ \calZ^C_0(T) - \calZ^C_0(0) \\
&\geq 
\calZ^B_0(T) - \calZ^B_0(t^C_{n_1})
=
\frac{3}{2^{2n_1+1}}(T - t^C_{n_1}) \geq \frac{1}{2}.
\end{aligned}
\end{equation}

By the  definition \eqref{eq:ya}-\eqref{j11.1a} of $\calZ^A_j(t)$ and \eqref{d2.1},
\begin{align}
\label{j23.3}
\calZ^A_j(T) - \calZ^A_{j+1}(T) &= \calZ^A_{j-1}(0) - \calZ^A_j(0) \geq \frac{1}{\rho}\qquad \text{ for } k=2,\ldots,m-1.
\end{align}

We recall from the paragraph preceding
\eqref{j23.2} that
 $\calZ^A_1(t) = \calZ^B_0(t)  + \calZ^C_0(t)$
and 
$\calZ^B_{0}(t)= \calZ^B_1(t^B_{n_1}) + 3 (1/2)^{2n_1+1} \cdot  (t-t^C_{n_1}) $  for  $t\in[t^C_{n_1}, \infty)$. We argued earlier in the proof that the order of functions $\calY^B$ is reversed over the interval $[0, t^B_{n_1}]$. Hence, $\calZ^B_1(t^B_{n_1}) = \calZ^B_{n_1}(0)$, $\calZ^C_0(T) = \calZ^C_{n_1}(0)$, and, therefore,  $\calZ^B_0(T) = \calZ^B_{n_1}(0) + (T-t^C_{n_1}) 3/2^{2n_1+1}$. 
 From \eqref{n26.10}, we have $\calZ^A_{1}(0) \leq 1$, $\calZ^B_{n_1}(0) \leq \calZ^B_0(0)+n_1$, and $\calZ^C_{n_1}(0) \leq \calZ^C_0(0)+n_1$. Putting all of this together, with \eqref{eq:tc_bds}, yields
\begin{align}\label{f15.1}
\calZ^A_{1}(T) &\leq 
\calZ^A_{1}(T)-\calZ^A_{1}(0)+\calZ^A_{1}(0)
\leq 
 \calZ^B_0(T) - \calZ^B_0(0)+ \calZ^C_0(T) - \calZ^C_0(0) +1\\
&\leq
\calZ^B_{n_1}(0) + (T-t^C_{n_1}) 3/2^{2n_1+1} 
 - \calZ^B_0(0)+ \calZ^C_{n_1}(0) - \calZ^C_0(0) +1\notag\\
&\leq
\calZ^B_0(0)+n_1 + (T-t^C_{n_1}) 3/2^{2n_1+1} 
 - \calZ^B_0(0)+ \calZ^C_0(0)+n_1 - \calZ^C_0(0) +1\notag\\
&\leq
1 + 2n_1 + 3T/2^{2n_1+1} \leq 1+T.\notag
\end{align}

Fix $\eta\in (0, T/(1+3T))$. By Lemma \ref{le:sk_unif}, $\bX^k\to\bZ$ uniformly in $[0,T]$. This and \eqref{f15.1} imply that for large $k$, 
\begin{align}
w_1\cdot a_1(\eps_k T) &\geq (1-\eta)\eps_k \calZ^B_0(T) = (1-\eta)\eps_k \calZ^B_{n_1}(0) \geq (1-\eta)\eps_k\rho^{-1} >0, \label{f15.2}\\
w_2\cdot a_1(\eps_k T) &\geq (1-\eta)\eps_k \calZ^C_0(T) = (1-\eta)\eps_k \calZ^C_{n_1}(0) \geq (1-\eta)\eps_k\rho^{-1} >0, \label{f15.3}\\
w_0\cdot a_1(\eps_k T) &\leq (1+\eta)\eps_k \calZ^A_1(T) \leq \eps_k (1+\eta)(1+T) \leq \eps_k(1+2T).\label{f15.4}
\end{align}

We will refer to ``conditions (i)-(iv)'' below. These are conditions (i)-(iv)  in Definition \ref{de:ic}  with $t_0=T$, $\eps = \eps_k(1+2T)$, and $\rho$ replaced with $\rho (1+3T)$.

Condition (i) is satisfied due to \eqref{f15.2}-\eqref{f15.4}.

Since discs' speeds (derivatives of functions in $\bS^k$) are bounded from above by 1 and $\bS^k$ satisfies $(\eps_k,\rho)$ initial conditions, we have  $|P_{u_0}a_j(\eps_k T)|\leq |P_{u_0}a_j(0)| + \eps_kT \leq \eps_k(1+T)$.
This and similar estimates for $b_j$'s and $c_j$'s show that condition (ii) holds.

Recall Definitions \ref{de:gaps} and \ref{de:ic}. Since $\bS^k$ satisfies $(\eps_k,\rho)$ initial conditions and derivatives of components of $\bS^k$ are bounded  by 1, we have for every $k$ and $j=1,\ldots,m-1$,
\begin{align*}
w_0\cdot \rpar{a_j(\eps_k T) - a_{j+1}(\eps_k T)}-2 &\leq w_0\cdot\rpar{a_j(0) - a_{j+1}(0)}-2 + 2\eps_k T\leq \eps_k(1+2T),
\end{align*}
from which it follows that 
$\max_{2\leq j\leq m}\Gamma^{A}_j(\eps_k T)\leq \eps_k(1+2T)$. 
Similarly,  $\max_{1\leq j\leq n_1}\Gamma_j^{B}(\eps_k T)$ and $\max_{1\leq j\leq n_1}\Gamma_j^{C}(\eps_k T)$ are bounded  by $ \eps_k(1+2T)$. It follows that $\rho^+_\Gamma(\eps_k T)\leq \eps_k(1+2T)$, i.e., the upper bound in condition (iii) holds. 

For the lower bound, given $\eta$ as above, and $k$ large enough, we obtain from \eqref{j23.1} that
\begin{align*}
\min_{1\leq j\leq n_1} \Gamma_j^{B}(\eps_k T) \wedge\min_{1\leq j\leq n_1} \Gamma_j^{C}(\eps_k T) \geq (1-\eta)  {\eps_k}.
\end{align*}
 By \eqref{j23.2} and \eqref{j23.3},  for $k$ large enough, we  have $\min_{2\leq j\leq m}\Gamma_j^{A}(\eps_k T) \geq (1-\eta) \eps_k/\rho$, since $\rho\geq 2$. By our choice of $\eta$, we have  $1-\eta\geq (1+2T)/(1+3T)$. Hence, we obtain the lower bound in condition (iii), i.e.,
\begin{align*}
\rho^-_\Gamma(\eps_k T) \geq \frac{\eps_k (1+2T)}{\rho (1+3T)}.
\end{align*}

It remains to verify condition (iv). By Lemma \ref{le:sk_unif}, for large $k$, 
\begin{align}
\label{j5.03}
\frac{{\calX^{B,\eps_k}_1(T) - \calX^{B,\eps_k}_{0}(T)}}{{\calX^{C,\eps_k}_1(T) - \calX^{C,\eps_k}_{0}(T)}} + \eta
\geq 
\frac{{\calZ^{B}_1(T) - \calZ^{B}_{0}(T)}}{{\calZ^{C}_1(T) - \calZ^{C}_{0}(T)}}.
\end{align}
We use \eqref{ja8.6}, \eqref{ja8.11} and \eqref{d2.1} in the following computation,
\begin{align*}
\frac{{\calZ^{B}_1(T) - \calZ^{B}_{0}(T)}}{{\calZ^{C}_1(T) - \calZ^{C}_{0}(T)}} &= 
\frac{{\calZ^{B}_1(T) - \calZ^{B}_{1}(t^B_{n_1}) - 3(1/2) ^{2n_1+1} (T-t^C_{n_1}) }}{{\calZ^{C}_1(T) - \calZ^{C}_{1}(t^C_{n_1})}}\\
&=
\frac{{3(1/2) ^{2n_1-1}(T - t^B_{n_1})
 - 3(1/2) ^{2n_1+1} (T-t^C_{n_1}) }}
{{3(1/2) ^{2n_1}(T  -t^C_{n_1})}}\\
&=
\frac{{2(T - t^B_{n_1}) }}
{{T  -t^C_{n_1}}} - \frac12
= \frac32 + \frac{2(t^C_{n_1} - t^B_{n_1})}{T-t^C_{n_1}}\\
&= \frac32 + \frac{2 ( 2^{2n_1}/3 ) \calG^C_{n_1}(0)}{T-t^C_{n_1}} \geq \frac32 + \frac{2^{2n_1+1} }{3T\rho}.
\end{align*}
If we choose $\eta$ smaller than $2^{2n_1+1}/(3T\rho)$ and combine the last estimate with \eqref{j5.03}, we obtain the second inequality in \eqref{eq:ic_ratio} for large $k$. This completes the proof that the family $\bS^k$ satisfies $(\eps_k(1+2T),\rho(1+3T))$ initial conditions at time $\eps_k T$, for large  $k$, which contradicts our choice of the sequence $\bS^k$, and completes the proof.
\end{proof}

\begin{proposition} 
\label{th:prep}
For every $\eps\in (0,1)$ and $\rho>3/2$, there is a family $\bS$ of $2n_1+1$ functions and $T_{0}<0$ such that:
\begin{enumerate}[(i)]
\item no collisions occur in $(-\infty,T_{0}]$, 
\item $n_1(n_1-1)$ collisions occur in $(T_{0},0)$,  
\item $(\eps,\rho)$ initial conditions hold at time zero for the system $\bS$, and
\item $|D \bS(0)|=1$, and $|D  a_1(0) - w_0|\leq \eps$.
\end{enumerate}
\end{proposition}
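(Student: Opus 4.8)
The plan is to build $\bS$ by superposing three non-interacting pieces: a straight-moving disc $A_1$ and two independent one-dimensional maximal-collision evolutions, one for the family $\{B_k\}$ along $L_1$ and one for $\{C_k\}$ along $L_2$, arranged so that all collisions happen before time $0$ while the two families are tightly clustered near the origin at time $0$. Concretely, I would set $a_1(0)=(0,0)$ and $Da_1(0)=\gamma w_0$ for a constant $\gamma\in(0,1]$ to be fixed, keep every center $b_k(t)$ on $L_1$ and every $c_k(t)$ on $L_2$ with velocity parallel to the respective line, and let $A_1$ move freely. Since $u_1\perp w_1$ and $u_2\perp w_2$, the projections $P_{u_1}b_k$ and $P_{u_2}c_k$ vanish identically and $P_{u_0}a_1(0)=0$, so condition (ii) of Definition \ref{de:ic} holds automatically (for $m=1$ there is no constraint on $a_k$ with $k\geq2$); and $w_0\cdot a_1(0)=w_1\cdot a_1(0)=w_2\cdot a_1(0)=0$ gives condition (i). Because the centers of each family stay on one line and collide only with one another, the evolution of the projected coordinates $w_1\cdot b_k$ (resp.\ $w_2\cdot c_k$) is exactly the one-dimensional reflecting-point dynamics of Example \ref{oc18.2}, with the diameter absorbed into the gaps $\Gamma^B_k,\Gamma^C_k$ of Definition \ref{de:gaps}.

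For the one-dimensional building block on $L_1$ I would use the non-interacting (crossing) representation. Take free points with projected positions $s^B_1(0)<\dots<s^B_{n_1}(0)$ and velocities $v_1<\dots<v_{n_1}$ that are co-monotone; then the spatial order is completely reversed between $t=-\infty$ and $t=0$, so each of the $\binom{n_1}{2}$ pairs crosses exactly once, all crossings occurring at finite negative times. A generic choice of the $v_j$ makes these times distinct, and likewise on $L_2$; choosing $T_0$ just below the earliest crossing on either line yields conditions (i) and (ii), with $n_1(n_1-1)/2+n_1(n_1-1)/2=n_1(n_1-1)$ collisions in $(T_0,0)$ and none in $(-\infty,T_0]$. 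Crucially, the gaps at time $0$ are free parameters: I would prescribe $\Gamma^B_k(0),\Gamma^C_k(0)\in[\eps/\rho,\eps]$, taking in particular $\Gamma^B_1(0)=\eps/\rho$ and $\Gamma^C_1(0)=\eps$, so that the ratio in Definition \ref{de:ic}(iv) equals $1/\rho<2/3$, precisely because $\rho>3/2$; this is the only place the hypothesis $\rho>3/2$ is used, and it gives conditions (iii) and (iv). Finally I would take the velocity scale $\delta:=\max_j|v_j|$ so small that the total speed $\sigma$ of the $B$ and $C$ families satisfies $\sigma\leq\sqrt{\eps}$, and set $\gamma=\sqrt{1-\sigma^2}$; then $|D\bS(0)|=1$ and $|Da_1(0)-w_0|=1-\gamma\leq\sigma^2\leq\eps$, which is condition (iv) of the proposition.

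The heart of the argument is to verify that the three pieces do not interfere, i.e.\ that the only collisions in $(T_0,0)$ are the intended $B$--$B$ and $C$--$C$ ones. Here I would exploit the freedom in the \emph{signs} of the velocities: choosing all $v_j<0$ (still increasing) forces $s^B_j(t)=s^B_j(0)+v_jt>s^B_j(0)\geq 2+\eps/\rho$ for every $t<0$, and likewise $s^C_j(t)>2$, so both families stay strictly on the far side of the origin along their lines throughout $(T_0,0)$. Writing $b_k(t)=s^B_k(t)w_1$, $c_i(t)=s^C_i(t)w_2$, $a_1(t)=\gamma t\,w_0$, and using $w_1\cdot w_2=-1/2$ together with $w_0\cdot w_1=w_0\cdot w_2=1/2$, one obtains the explicit lower bounds $|b_k(t)-c_i(t)|^2=(s^B_k)^2+(s^C_i)^2+s^B_ks^C_i>12$ and, for $t<0$, $|b_1(t)-a_1(t)|^2\geq(s^B_1(t)-\gamma t/2)^2>4$, with the analogous estimate for $c_1$ and for discs farther out on each arm. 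Hence no cross-arm pair and no $A_1$-pair ever reaches distance $2$ before time $0$, while at $t=0$ one checks $(Db_1-Da_1)\cdot(b_1-a_1)<0$, so $A_1$ does begin to approach $B_1$ (and $C_1$) only at $t=0$, as needed for the subsequent evolution. Distinctness of all crossing times across both lines rules out simultaneous collisions.

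I expect the main obstacle to be exactly this bookkeeping: arranging the signs and scale of the one-dimensional velocities so that, \emph{simultaneously}, (a) each line still realizes the full $\binom{n_1}{2}$ crossings in finite negative time, (b) the prescribed gap profile and the ratio $\Gamma^B_1(0)/\Gamma^C_1(0)=1/\rho$ are met, and (c) the arms provably remain pairwise separated on all of $(T_0,0)$. Once the velocities are taken negative and small as above, (c) collapses to the elementary quadratic-form estimates displayed in the previous paragraph, so the difficulty is organizational rather than conceptual; moreover, the continuity of trajectories in the initial data (Remark \ref{re:ja2.1b}) is available as a safety net, should one prefer to first construct the evolution with some gaps set to zero and then perturb to obtain strictly positive, non-simultaneous crossings.
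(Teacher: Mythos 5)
Your proposal is correct and follows essentially the same route as the paper's proof: both prescribe explicit data at time zero (disc $A_1$ at the origin with velocity close to $w_0$, the $B$- and $C$-discs tightly spaced along $L_1$ and $L_2$ with small, monotone, non-positive velocities along those lines), then run time backwards and invoke the one-dimensional maximal-collision picture of Example \ref{oc18.2} to produce $n_1(n_1-1)/2$ collisions per arm in $(T_0,0)$ and none earlier, finally checking conditions (i)--(iv) of Definition \ref{de:ic} at $t=0$. The only differences are cosmetic --- you take the gap ratio $1/\rho$ where the paper takes exactly $2/3$, and you spell out the cross-arm and $A_1$--arm separation estimates that the paper leaves implicit.
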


\begin{proof}
We will define initial conditions at time zero, and by running time backwards, we will show that  time $T_0$ exists. Recall the vectors $w_0,w_1$, and $w_2$ defined in Section \ref{se:notex}. For $j=1,\dots , n_1-1$, set
\begin{align*}
a_1(0) = (0,0),\ b_1(0) = \rpar{2+\frac23\eps}w_1,\ c_1(0) = (2+\eps)w_2 ,\\
b_{j+1}(0) = b_{j}(0) + (2+\eps)w_1,\ c_{j+1}(0) = c_{j}(0) + (2+\eps)w_2.
\end{align*}
It is straightforward to check that $(\eps,\rho)$ initial conditions hold at time zero. Next, set $D  a_1(0) = (1-\eps) w_0$, and for $j=1,\dots , n_1$,
\begin{align*}
D b_{j}(0) = \kappa (j- n_1) w_1,\qquad
D c_{j}(0) = \kappa (j- n_1) w_2,
\end{align*}
where $\kappa>0$ is  chosen so that $|D \bS(0)|=1$. In this way we make sure that (iv) is satisfied. 

Consider the dynamics of $t \mapsto \bS(-t)$ for $t>0$. Disc $A_1$ moves downwards. Discs $B_j$ (resp. $C_j$) move away from the origin along the line $L_1$ (resp. $L_2$). Since the arrow of time is reversed, the velocities of discs $B_j$ (resp. $C_j$) are increasing in $j$ along $w_1$ (resp. $w_2$). The dynamics of the families $\{B_j\}$ and $\{C_j\}$ are similar to that of the example presented at the end of Example \ref{oc18.2}, in terms of the number of collisions. For this reason, the only possible collisions are between $B_j$ and $B_{j+1}$, and between $C_j$ and $C_{j+1}$, and exactly $n_1(n_1-1)$  collisions occur in $(-\infty,0)$. Thus, we can choose $T_0<0$ such that all the collisions occur in $(T_0,0]$. 
\end{proof}

\begin{proof}[Proof of Theorem \ref{oc20.1}] Note that it will suffice to prove the theorem for $d=2$.

Recall the definitions of $T$ and $\eps_*(\rho)$ from Proposition \ref{th:epsT}. Fix any $\rho_0 >2$ and let $\rho_m = \rho_0(1+3T)^m$ for $m\geq 1$. Fix  $\eps_0 >0$ such that, if we define $\eps_m = \eps_0(1+2T)^m$ for  $1\leq m\leq n_2$, then $\eps_m \in (0,1/2)$ and $\eps_m < \eps_*(\rho_m)$ for all $1\leq m\leq n_2$.

 Let $\lambda_1=1$, $T_1=0$,  $\lambda_{m+1} = \lambda_m/\sqrt{\eps_{m+1}}$ and $T_{m+1}=T_{m}+\eps_m\lambda_m^{-1}T$ for $m\geq 1$. 

\setcounter{step}{-1}
\begin{step}
We apply Proposition \ref{th:prep} with
 $\eps=\eps_1$ and $\rho=\rho_1$. According to the proposition, there exist $T_{0}<0$ and a system $\bS_1$ of $2n_1+1$ discs with properties (i)-(iv)  listed in the statement of  Proposition \ref{th:prep}. Note that $|D \bS_1(t)|=\lambda_1=1$ for $t\leq T_1=0$.
\end{step}

\begin{step}
This is an inductive step. 

Suppose that  $1 \leq m\leq n_2-1$.
Consider the following assumptions on a system $\bS_m$ of $2n_1+m$ discs.

\medskip
(A) The family $\bS_m$  satisfies $(\eps_m,\rho_m)$ initial conditions at time $T_m$,  $|D  a_m(T_m) - \lambda_m w_0|<\lambda_m\eps_m$ and $|D \bS_m(T_m)| = \lambda_m$.
\medskip

Note that for $m=1$, the system $\bS_1$ constructed in Step 0 satisfies these assumptions.

We use Proposition \ref{th:epsT} to conclude that  $\bS_m$ undergoes at least $m-1+n_1(n_1+1)$ collisions in the interval $(T_m,T_{m+1})$, and $\bS_m$ satisfies $(\eps_{m+1},\rho_{m+1})$ initial conditions at time $T_{m+1}$. Since energy is conserved, $|D \bS_m(T_{m+1})| = \lambda_m$. 

We will construct a family of discs $\bS_{m+1}$ by adding a disc to $\bS_m$.

We  define the trajectory $a_{m+1}$ of the center of disc $A_{m+1}$ up to time $T_{m+1}$ as the unique linear function satisfying 
\begin{align}
\label{j25.1}
a_{m+1}(T_{m+1}) &= a_m(T_{m+1}) - (2+\eps_{m+1})w_0, \\ 
\label{j25.2}
D  a_{m+1}(t) &= \lambda_{m+1}\sqrt{1-\eps_{m+1}} w_0, \qquad\text{ for } t \leq T_{m+1}.
\end{align}
Since $|D \bS_m(T_{m+1})| = \lambda_m$, we have $|D a_m(t)| \leq \lambda_m $ for $t\leq T_{m+1}$.
We have assumed that $\eps_{m+1}\in (0,1/2)$, so for $t\leq T_{m+1}$,
\begin{align*}
w_0\cdot D   ( a_m - a_{m+1} )(t)   &\leq \lambda_m -\lambda_{m+1}\sqrt{1-\eps_{m+1}}  \\
&=\lambda_{m+1}\sqrt{\eps_{m+1}} - \lambda_{m+1}\sqrt{1-\eps_{m+1}}  \leq 0.
\end{align*}
It follows that, for $t\leq T_{m+1}$,
\begin{align*}
w_0\cdot (a_m(t) - a_{m+1}(t) ) &\geq w_0\cdot (a_m(T_{m+1}) - a_{m+1}(T_{m+1}) ) = 2 + \eps_{m+1}, 
\end{align*}
which shows that $A_{m+1}$ does not collide with any disc in the system $\bS_m$ before time $T_{m+1}$. This allows us to define a system $\bS_{m+1}(t)$ for $t\leq T_{m+1}$ by adding $a_{m+1}$ to $\bS_m$.

Recall that $\bS_m$ satisfies $(\eps_{m+1},\rho_{m+1})$ initial conditions at time $T_{m+1}$.
This and \eqref{j25.1} imply that the system $\bS_{m+1}$ of $2{n_1}+m+1$ discs satisfies $(\eps_{m+1},\rho_{m+1})$ initial conditions at time $T_{m+1}$. Since $\eps_{m+1} < 1/2$, by \eqref{j25.2}, 
\begin{align*}
|D  a_{m+1}(T_{m+1}) - \lambda_{m+1} w_0| = \lambda_{m+1}\rpar{1-\sqrt{1-\eps_{m+1}}} \leq \lambda_{m+1}\eps_{m+1}.
\end{align*}
It is straightforward to check from the definitions that $ | D  \bS_{m+1}( T_{m+1} ) |^2 =  \lambda_{m+1}^2$.

We conclude that $\bS_{m+1}$ satisfies assumptions (A) stated at the beginning of this step with $m $ replaced by $m+1$.

\end{step}

\begin{step}
Consider the system $ \bS_{n_2}$ inductively defined in the previous step. The system undergoes $n_1(n_1-1)$ collisions in $(T_0,T_1)$, 
and at least $m-1+n_1(n_1+1)$ collisions  in $(T_m,T_{m+1})$
for $m=1,\dots,n_2$. 
The sum $n_1(n_1-1)+ \sum_{m=1}^{n_2}(m-1+n_1(n_1+1))$ is equal to the function $f(n)$ defined in \eqref{d31.1}. This completes the proof.
\end{step}
\end{proof}

\section{Small families of balls}\label{smallex}

This section is devoted to examples involving families of $n$ discs with $3\leq n \leq 6$. We start with the example involving only 3 discs.  Recall that an example showing that $K(3,2) \geq 4$ was found by J.D.~Foch and published in \cite{MurCoh}. That example contains the list of initial conditions (positions and velocities of discs) and a schematic drawing of the corresponding trajectories. While we find that drawing very helpful, we do not believe that it is  accurate. For this reason we present our own rendering of the Foch example in Fig.~\ref{fig8}.
\begin{figure} \includegraphics[width=0.9\linewidth]{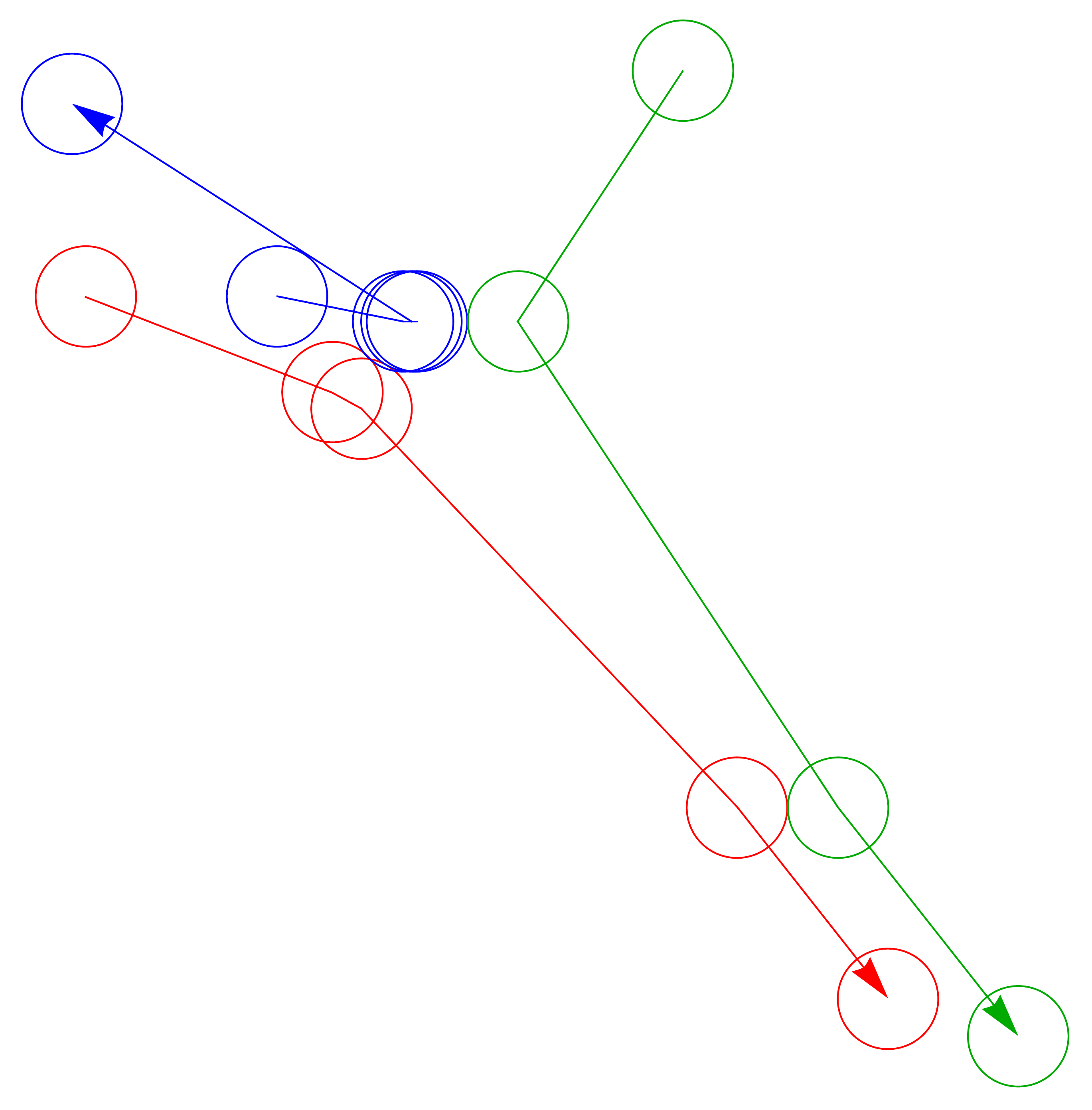}
\caption{
The Foch example---three discs in the plane colliding four times. First, the red disc hits the blue disc ``from behind.'' Then blue and green discs collide head on. Next the blue disc hits the red disc in a ricochet fashion. Finally, red and and green discs collide at a far away location.
The discs are shown at the initial positions given in \cite{MurCoh}, at the times when they collide, and at a time past the collisions. The vertical component of the blue disc velocity is zero between its first and third collisions.
}
\label{fig8}
\end{figure}

Our proof of Theorem \ref{n6.1} is based on a modification of the Foch example. Our version is ``conceptual'' in the sense that we can prove that for some initial conditions, three discs will collide four times without making any explicit numerical calculations. Nevertheless, the reader may find an explicit numerical example helpful; we present it in Remark \ref{n6.2} and Fig.~\ref{fig9}. 
\begin{remark}\label{n6.2}
We will describe some features of trajectories depicted in Fig.~\ref{fig9}.
The initial position of the red disc is at the origin. The initial velocity of the red disc is zero. 
The green disc hits the red disc with a great velocity. Then the  red disc slightly pushes the blue disc. The latter has been already moving in the SE direction and the slight push hardly changes its direction of motion.
After colliding with the red disc, the green disc is moving horizontally to the right, where it eventually hits the blue disc.
The fourth collision occurred before time 0, i.e., before the initial positions depicted in the figure. Specifically, when we reverse the direction of time, the blue disc will move in the NW direction with velocity $(-0.2, 0.05)$ from its initial position and it will hit the red disc. The initial position of the center of the green disc is distorted in the figure to help visualize the example. In fact, the first coordinate of the initial position of the center of the green disc is $-0.0086$; this would not be discernible from zero by the naked eye. Similarly, the last position of the center of the red disc is distorted. At the time of the third collision, i.e., the collision of the green and blue discs, the red disc is about 800 units away from the origin, in the direction that is much closer to the vertical  than the one shown in the figure. All other parts of the trajectory are depicted accurately. The discs are shown at the initial positions, at the times when they collide, and at a time past the collisions. 
The renditions of the blue disc at the initial time and the time of the first collision are so close to one another that they are visually indistinguishable.
The initial positions of the centers of red, blue and green discs are $(0,0), (1.9961, 0.5)$ and $(-0.0086, -3)$. The initial velocities of red, blue and green discs are $(0,0), (0.2, -0.05)$ and $(0.6622, 77)$. The  three positive collision times are $0.012987, 0.0193063$ and $10.4153$.
The  positions of the centers of red, blue and green discs at the last collision time $t=10.4153$, i.e., the time of the collision of blue and green discs, are $(-2.906, 800.963), (6.98517, -0.00268084)$ and $(7.08868, -2)$.
\end{remark}
\begin{figure} \includegraphics[width=0.9\linewidth]{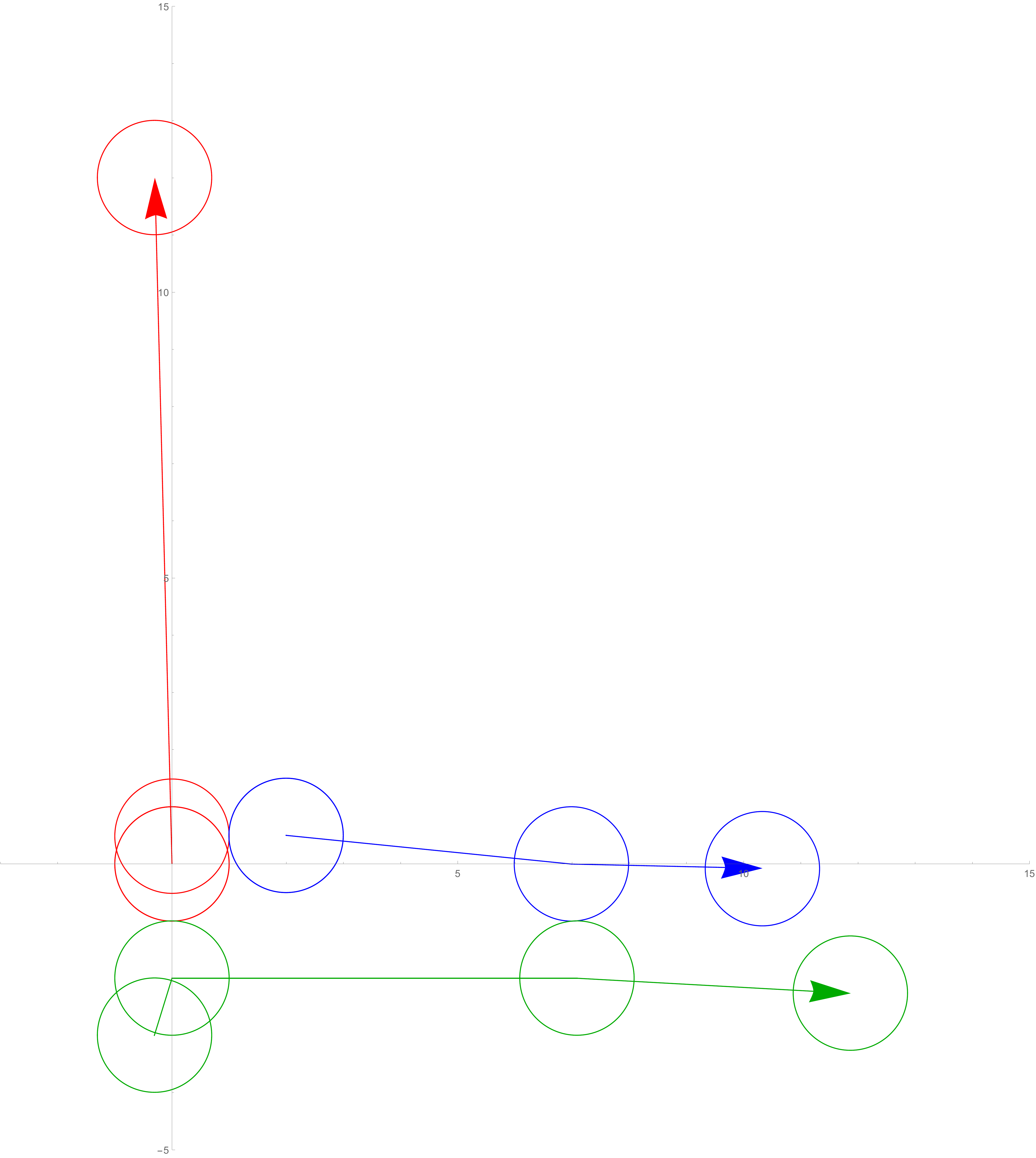}
\caption{
Three discs in the plane colliding four times. The first collision is not visible in the picture because it occurs at a negative time. See Remark \ref{n6.2} for the detailed description of the figure.
}
\label{fig9}
\end{figure}

\setcounter{step}{0}

\begin{proof}[Proof of Theorem \ref{n6.1}]
Note that it will suffice to prove the theorem for $d=2$.

\begin{step}\label{tt1}
First, we will consider only three discs. We will give a conceptual proof of Foch's result  that three discs may have four collisions.
Consider three discs with radii equal to 1, labeled $R,G$ and $B$, representing colors red, green and blue used in Fig.~\ref{fig9}.
Let $x^R(t) = (x^R_1(t), x^R_2(t))$ and $v^R(t) = (v^R_1(t), v^R_2(t))$ denote the position and velocity of disc $R$ at time $t$. Analogous notation will be used for discs $G$ and $B$.

We start by specifying only some of the initial conditions,
\begin{align*}
x^R(0) &= (0,0), \qquad v^R(0-) = (0,0),\\
x^G(0) & = (0,-2), \\
x^B_2(0) &= 0.5, \qquad v^B(0-) = (4,-1).
\end{align*}
We will choose $x^B_1(0)$ in the interval $(1.95, 2)$. It is elementary to check that if $x^B_1(0)$ is any number in the interval $(1.95, 2)$ then

(a) discs $R$ and $B$ do not intersect or touch at time $t=0$, and

(b) if we change the arrow of time, i.e., if the disc $B$ starts moving at time $t=0$ in the direction $(-4,1)$ and the disc $R$ remains static with the center at $(0,0)$ then the two discs will collide. 

Suppose for a moment that $x^B(0) = (2, 0.5)$ and $v^B(0-) = (4,-1)$.
Then the center of $B$ will cross the horizontal axis at time $t=0.5$ at the point $(4,0)$. 
We will now use the continuity of trajectories as functions of the initial conditions (see Remark \ref{re:ja2.1b}).
Find $\delta_1\in(0,0.05)$ so small that if $\delta \in(0,\delta_1)$, $x^B_2(\delta) = 0.5$,
$x^B_1(\delta) \in (2-\delta,2)$ and 
\begin{align}\label{ja1.1}
|v^B(\delta) - (4,-1)| \leq \delta
\end{align}
then the center of $B$ will cross the horizontal axis at a time $t_1 \in (0.4, 0.6)$ at a point $(z,0)$ with $z\in (3,5)$. 
We will choose a $\delta$ satisfying the above condition  and some more conditions later.  

Now suppose that $v^G(0+) = (w,0)$, where $w$ is chosen so that $x^G(t_1) = (z,-2)$. Then discs $B$ and $G$ will collide at time $t_1$. Since $t_1 \in (0.4, 0.6)$ and $z\in (3,5)$, we must have $w\geq 3 /0.6 = 5$.

At the moment of the collision the centers of $B$ and $G$ will lie on a vertical line. Hence,
after the collision, i.e., for $t>t_1$, the horizontal  velocities of discs $B$ and $G$ will be the same as before the collision and we will have 
\begin{align*}
v^B_1(t) & \leq 4 +\delta< 4.05, \qquad v^B_2(t) = 0,\\
v^G_1(t) & =w\geq 5, \qquad v^G_2(t) \geq -1 -\delta > -1.05.
\end{align*}
This implies that for large $t$, the line passing through $x^B(t)$ and $x^G(t)$ will intersect the horizontal axis at a point $(u,0)$ with $u \leq 4.05 t$ and its slope will be greater than $-2$ because  $- 1.05/(5-4.05) > -2$.

Find $r_1>0$ so large and $\eps_1>0$ so small that if $r\geq r_1$, $\eps\in(0,\eps_1)$, $x^R_1(\eps)=0$, $x^R_2(\eps) \in[0,0.5]$ and
\begin{align}\label{ja1.2}
|v^R(\eps) - (0,r)| \leq \eps_1
\end{align}
then for large $t$, the center of $R$ will lie above any line   
which intersects the horizontal axis at a point $(u,0)$ with $u \leq 4.05 t$ and has a slope  greater than $ -2$. Note that if this holds true then there must be a time such that the centers of $R, G$ and $B$ are aligned.

Suppose for a moment that $x^B(0) = (2-\delta_1, 0.5)$ and $v^B(0-) = (4,-1)$ and let $t_3>0$ be so small that the point $(1,0.5)$ belongs to the interior of $B$ for all times in $[0,t_3]$.

Suppose that $r_1$ and $\eps_1$ satisfy the above conditions and, in addition, $r_1 > 0.5/t_3$. If $v^R(0) = (0,r)$ and $r\geq r_1$, and we ignore disc $B$ then 
we will have $x^R_2(t_2) = 0.5$ for some $t_2 < t_3$.
We now let $v^G_2(0-) = r = 2r_1$. At time $t=0$, discs $R$ and $G$ will collide and we will have $v^R(0+) = (0,r)$. 

If we let $x^B(0) = (2-\delta_1,0.5)$ then discs $R$ and $B$ will collide. If, on the other hand, we let $x^B(0) = (2,0.5)$ then  $R$ and $B$ will not collide. Hence, there exists $\delta \in (0, \delta_1)$ such that if
$x^B(0) = (2-\delta,0.5)$ then discs $R$ and $B$ will collide and the collision will be so close to the ``grazing'' collision that conditions \eqref{ja1.1} and \eqref{ja1.2} will be satisfied. At this point we let $v^G_1(0-) = w$ as described earlier in the proof and we note that we also have $v^G_1(0+) = w$. 

We have constructed trajectories of $R,G$ and $B$ in such a way that the discs have three collisions at times $t\geq 0$, one collision at a time $t<0$ (see point (b) earlier in the proof concerning the last claim), and their centers are aligned at a certain time following the four collisions.

\end{step}

\begin{step}\label{tt2}

Fix an arbitrary integer $m>0$. Suppose that at time $t=0$, the centers of discs $R,G$ and $B$ lie on a straight line $L$ in this order and they do not touch each other. Moreover, suppose that the speeds of the three discs are zero. It is elementary to find initial positions and velocities  of discs $A_1, A_2, \dots, A_m$ with radii 1 so that their centers lie on $L$ at time $t=0$, initial velocities are parallel to $L$, and the evolution of the system is the following. Disc $A_1$ will hit $R$, then $R$ will hit $G$ and then $G$ will hit $B$. Next, $A_2$ will hit $A_1$, $A_1$ will hit $R$, then $R$ will hit $G$ and then $G$ will hit $B$. At the $k$-th stage, $A_k$ will hit $A_{k-1}$, $A_{k-1}$ will hit $A_{k-2}$, $\dots$, $A_2$ will hit $A_1$, $A_1$ will hit $R$, then $R$ will hit $G$ and then $G$ will hit $B$.
The graphical representation of the system described in Example \ref{oc18.2} is probably the easiest way to see that the evolution described above exists. 

The total  number of collisions will be $3+4+\dots + (m+2) = (m^2 + 5m)/2 $.
By continuity of trajectories as functions of the initial conditions (see Remark \ref{re:ja2.1b}), we do not have to assume that the initial speeds of $R,G$ and $B$ are zero. It is enough to assume that their speeds are sufficiently small and we will still have the same number of collisions. But multiplying all velocities in the system by the same strictly positive scalar does not change the number of collisions, so the initial speeds 
of $R,G$ and $B$ can be arbitrary.

\end{step}

\begin{step}\label{tt3}

We now combine the two steps. According to Step \ref{tt1}, $R,G$ and $B$  have four collisions and at a certain time following the collisions, their centers will be aligned. At this time, we ``add'' discs $A_1, \dots A_m$ to the system. The total number of collisions will be  $  (m^2 + 5m)/2 + 4$. The total number of discs will be  $n=3+m$ so the total number of collisions may be expressed as $  (m^2 + 5m)/2 + 4 = n(n-1)/2 +1$.
\end{step}
\end{proof}

\section{Appendix}
\label{se:app}

This section contains some technical results that are used in the proof of the main theorem of this article.

\begin{lemma}
\label{le:extension}
Let $T>0$ be fixed, and assume that $D  f_k\to D  f$ and $D g_k\to D g$ in $\calD [0,R]$, for all $R<T$. Suppose that there exist $\delta>0$ and a sequence $t_k\to T$ such that 
\begin{enumerate}[(i)]
\item $D  g$ and $D  f$ are constant in $[T-2\delta,T)$, and in $(T,T+2\delta]$.
\item $D  g_k$ and $D  f_k$ are constant in $[T-\delta,t_k)$, and in $(t_k,T+\delta]$.
\item There is a sequence $\alpha_k\to 0$ such that
\begin{align*}
| D  f_k(t_k) - D  f(T) | \leq  | D  g_k(t_k-) - D  g(T-) | +  | D  f_k(t_k-) - D  f(T-) |+\alpha_k.
\end{align*}
\end{enumerate}
Then, $D  f_k\to D  f$ in $\calD [0,T+\delta]$.
\end{lemma}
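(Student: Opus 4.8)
The plan is to upgrade the convergence on the intervals $[0,R]$, $R<T$, to convergence on $[0,T+\delta]$ by constructing, for each large $k$, an explicit time change $\lambda_k\in\Lambda_{T+\delta}$ that glues the Skorohod reparametrization already available on $[0,T-\delta]$ to a piecewise-linear map near $T$ which sends the jump time $t_k$ of $D f_k$ exactly onto the jump time $T$ of $D f$. Since $D f$ is constant on $[T-2\delta,T)$ and on $(T,T+2\delta]$ by (i), and $D f_k$ is constant on $[T-\delta,t_k)$ and on $(t_k,T+\delta]$ by (ii), each function has at most one jump in the critical window, so matching $t_k\mapsto T$ will absorb it.

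First I would reduce hypothesis (iii) to the single statement $D f_k(t_k)\to D f(T)$. For this I claim $D f_k(t_k-)\to D f(T-)$ and $D g_k(t_k-)\to D g(T-)$. Indeed, fix $R=T-\delta/2$; since $D f$ is constant (hence continuous) on $[T-2\delta,T)$, the Skorohod convergence on $[0,R]$ forces $D f_k\to D f$ uniformly on the subinterval $[T-\delta,T-\delta/2]$, because a reparametrization close to the identity cannot move points of this interval outside $[T-2\delta,T)$, where $D f$ takes a single value. As $t_k\to T$, for large $k$ we have $T-\delta/2\in[T-\delta,t_k)$, and $D f_k$ is constant on $[T-\delta,t_k)$, so $D f_k(t_k-)=D f_k(T-\delta/2)\to D f(T-)$; the same argument applies to $g$. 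Feeding these two limits into (iii) gives $|D f_k(t_k)-D f(T)|\to0$.

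Next I would build $\lambda_k$. From $D f_k\to D f$ in $\calD[0,T-\delta]$ pick $\mu_k\in\Lambda_{T-\delta}$ with $\sup_{[0,T-\delta]}|\mu_k(t)-t|\le\beta_k$ and $\sup_{[0,T-\delta]}|D f_k(t)-D f(\mu_k(t))|\le\beta_k$, where $\beta_k\to0$ (note $\mu_k(T-\delta)=T-\delta$). Define $\lambda_k$ to equal $\mu_k$ on $[0,T-\delta]$, to be linear on $[T-\delta,t_k]$ with $\lambda_k(T-\delta)=T-\delta$ and $\lambda_k(t_k)=T$, and linear on $[t_k,T+\delta]$ with $\lambda_k(t_k)=T$ and $\lambda_k(T+\delta)=T+\delta$. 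For $k$ large (so that $t_k\in(T-\delta,T+\delta)$) this is a strictly increasing homeomorphism of $[0,T+\delta]$ onto itself; on $[T-\delta,T+\delta]$ the difference $\lambda_k(t)-t$ is piecewise affine, vanishing at $T-\delta$ and $T+\delta$ and equal to $T-t_k$ at $t_k$, so combined with the bound on $[0,T-\delta]$ one gets $\sup_{[0,T+\delta]}|\lambda_k(t)-t|\le\beta_k\lor|T-t_k|\to0$. Then I would estimate $\sup_t|D f_k(t)-D f(\lambda_k(t))|$ piecewise: on $[0,T-\delta]$ it equals $|D f_k(t)-D f(\mu_k(t))|\le\beta_k$; on $[T-\delta,t_k)$ both sides are constant, with $D f_k(t)=D f_k(t_k-)$ and $D f(\lambda_k(t))=D f(T-)$ since $\lambda_k$ maps this interval into $[T-2\delta,T)$, so the difference is $|D f_k(t_k-)-D f(T-)|\to0$; on $[t_k,T+\delta]$ both sides are constant, with $D f_k(t)=D f_k(t_k)$ and $D f(\lambda_k(t))=D f(T)$ since $\lambda_k$ maps into $[T,T+2\delta]$, so the difference is $|D f_k(t_k)-D f(T)|\to0$. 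Taking the maximum over the three pieces yields $\dist^{T+\delta}(D f_k,D f)\to0$, which is the assertion.

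I expect the main obstacle to be the reduction step: turning the Skorohod convergence on $[0,R]$ into genuine uniform convergence on the small window where $D f$ is flat, so that the left limits $D f_k(t_k-)$ can be controlled even though $t_k$ creeps up to the endpoint $T$ of the convergence region. Once $D f_k(t_k)\to D f(T)$ is secured, the construction of $\lambda_k$ and the region-by-region estimate are routine, the only remaining point of care being that the glued map stays a bona fide element of $\Lambda_{T+\delta}$ with vanishing sup-distance to the identity.
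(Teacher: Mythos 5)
Your proof is correct and follows essentially the same strategy as the paper's: glue the Skorohod reparametrization on $[0,T-\delta]$ to an explicit time change near $T$ that matches the jump time $t_k$ with $T$, then bound the discrepancy piecewise using the constancy hypotheses (i)--(ii) and hypothesis (iii) for the post-jump value. The only differences are cosmetic (a piecewise-linear rather than quadratic time change, the metric $\dist^{T+\delta}$ rather than $\dist_0^{T+\delta}$, and a hands-on uniform-convergence argument on the flat window where the paper simply invokes pointwise convergence at the continuity point $T-\delta$).
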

\begin{proof}
Recall notation from Section \ref{Skor}. Since $D  f_k\to D  f$, there exist $\beta_k$ converging to $0$ and $\wt\lambda_k\in \Lambda_{T-\delta}$  such that $||\wt\lambda_k||_{T-\delta} \leq \beta_k$ and 
\begin{align*}
\sup_{0 \leq u \leq T-\delta}
| D  f_k(\wt\lambda_k(u)) - D  f(u)|\leq \beta_k.
\end{align*}
Define $\lambda_k$ as follows: 
\begin{align*}
\lambda_k(x) = 
\begin{cases}
\wt\lambda_k(x), & x\leq T-\delta, \\
x-T+t_k + \rpar{\frac{x-T}{\delta}}^2 (T-t_k), &  T-\delta < x\leq T+\delta.
\end{cases}
\end{align*}
Note that $\lambda_k(0)=0$, $\lambda_k(T-\delta)=T-\delta$, $\lambda_k(T) =t_k$, and $\lambda_k(T+\delta) = T+\delta$. We have $\frac \prt{\prt x} \lambda_k(x) = 1 + O(\frac{1}{\delta} |T-t_k|)$ for $x\in [T-\delta,T+\delta]$. These observations imply that $||\lambda_k||_{T+\delta} \leq\max\kpar{||\wt\lambda_k||_{T-\delta}, C_1\delta^{-1} |T-t_k|}$ for large enough $k$, and some universal $C_1$.

By $(i)$ and $(ii)$, it follows that for all  $u \in [T,T+\delta]$
\begin{align*}
| D & f_k(\lambda_k(u)) - D  f(u)| = | D  f_k( t_k ) - D  f(T)| \\
&\leq  | D  g_k(t_k-) - D  g(T-) | +  | D  f_k(t_k-) - D  f(T-) | +\alpha_k \\
&=  | D  g_k(T-\delta) - D  g(T-\delta) | +  | D  f_k(T-\delta) - D  f(T-\delta) | + \alpha_k.
\end{align*}
For all $u\in [T-\delta,T)$,
\begin{align*}
| D  f_k(\lambda_k(u)) - D  f(u)| &= | D  f_k(T-\delta) - D  f(T-\delta) | .
\end{align*}
Therefore,
\begin{align*}
\dist_0^{T+\delta } &(D  f_k,D  f) 
\leq \sup_{0 \leq u \leq T+\delta}
| D  f_k(\lambda_k(u)) - D  f(u)|
+ ||\lambda_k||_{T+\delta}\\
&\leq
\sup_{0 \leq u \leq T-\delta}
| D  f_k(\lambda_k(u)) - D  f(u)|
+ ||\lambda_k||_{T+\delta}\\
&\qquad + \sup_{T-\delta \leq u \leq T}
| D  f_k(\lambda_k(u)) - D  f(u)|
+ \sup_{T \leq u \leq T+\delta}
| D  f_k(\lambda_k(u)) - D  f(u)|\\
&\leq
\sup_{0 \leq u \leq T-\delta}
| D  f_k(\wt\lambda_k(u)) - D  f(u)|
+ ||\wt\lambda_k||_{T-\delta}+ C_1\delta^{-1} |T-t_k|\\
&\qquad + | D  f_k(T-\delta) - D  f(T-\delta) |\\
&\qquad+ | D  g_k(T-\delta) - D  g(T-\delta) | +  | D  f_k(T-\delta) - D  f(T-\delta) | + \alpha_k\\
&\leq
2\beta_k + C_1\delta^{-1} |T-t_k|\\
&\qquad+ | D  g_k(T-\delta) - D  g(T-\delta) | + 2 | D  f_k(T-\delta) - D  f(T-\delta) | + \alpha_k.
\end{align*}
For a fixed $\delta>0$, 
 when $k$ goes to infinity, 
$\beta_k \to 0$, $C_1\delta^{-1} |T-t_k| \to 0$ and $\alpha_k \to 0$.
Since $T-\delta$ is a continuity point of $D g$ and $Df$ (by assumption $(i)$), and Skorohod convergence implies pointwise convergence at continuity points,
we obtain
\begin{align*}
\lim_{k\to\infty}\left(
| D  g_k(T-\delta) - D  g(T-\delta) | + 2 | D  f_k(T-\delta) - D  f(T-\delta) |\right) = 0.
\end{align*}
Hence, $\lim_{k\to \infty} \dist_0^{T+\delta } (D  f_k,D  f) =0$.
\end{proof}

\begin{corollary}
\label{co:extension}
Let $T>0$ be fixed and assume that $D  f_k\to D  f$ in $\calD [0,R]$, for all $R<T$. Assume there is $\alpha >0$ such that $D  f_k$ and $D  f$ are constant in $[T-\alpha,T+\alpha]$. Then $D  f_k\to D  f$ in $\calD [0,T+\alpha]$.
\end{corollary}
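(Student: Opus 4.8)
The plan is to prove convergence directly with respect to the metric $\dist^{T+\alpha}$, which by Section \ref{Skor} is equivalent to $\dist_0^{T+\alpha}$ and hence metrizes Skorohod convergence; it therefore suffices to show $\dist^{T+\alpha}(Df_k, Df)\to 0$. The key observation is that, because both $Df_k$ and $Df$ are constant on the window $[T-\alpha, T+\alpha]$, there is no discontinuity to track past time $T$, so a time change that works on $[0,R]$ for some $R$ slightly below $T$ can be extended to $[0,T+\alpha]$ simply by the identity. This is the degenerate case of the construction in Lemma \ref{le:extension} (no genuine collision at $T$), so no new idea is needed beyond gluing a given time change to the identity.

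Concretely, I would fix $\eps>0$ and set $R = T-\alpha/2$, so that $R<T$ (the convergence on $[0,R]$ is then available by hypothesis) while $R\geq T-\alpha$, which guarantees $[R,T+\alpha]\subseteq[T-\alpha,T+\alpha]$. Write $c_k$ and $c$ for the constant values of $Df_k$ and $Df$ on $[T-\alpha,T+\alpha]$. Since $Df_k\to Df$ in $\calD[0,R]$, for all large $k$ there is $\wt\lambda_k\in\Lambda_R$ with $\sup_{[0,R]}|\wt\lambda_k(t)-t|\leq \eps$ and $\sup_{[0,R]}|Df_k(t)-Df(\wt\lambda_k(t))|\leq \eps$. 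Evaluating the second bound at $t=R$ and using $\wt\lambda_k(R)=R$ (endpoints are fixed in $\Lambda_R$) together with $R\in[T-\alpha,T+\alpha]$ yields $|c_k-c|=|Df_k(R)-Df(R)|\leq \eps$ for free; this is the only place where convergence of the constant values is needed.

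Next I would define $\lambda_k\in\Lambda_{T+\alpha}$ by $\lambda_k=\wt\lambda_k$ on $[0,R]$ and $\lambda_k(t)=t$ on $[R,T+\alpha]$. Since $\wt\lambda_k(R)=R$, this map is continuous, strictly increasing, and onto $[0,T+\alpha]$, so it is an admissible time change. Then $\sup_{[0,T+\alpha]}|\lambda_k(t)-t|=\sup_{[0,R]}|\wt\lambda_k(t)-t|\leq \eps$, and for the function estimate I would split $[0,T+\alpha]=[0,R]\cup[R,T+\alpha]$: on $[0,R]$ the bound $\eps$ holds by construction, while on $[R,T+\alpha]$ both functions equal their constant values, so $|Df_k(t)-Df(\lambda_k(t))|=|c_k-c|\leq \eps$. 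Hence $\dist^{T+\alpha}(Df_k,Df)\leq \eps$ for all large $k$, and letting $\eps\downarrow 0$ finishes the proof.

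The only point requiring care—the main obstacle—is checking that the glued map $\lambda_k$ is a legitimate element of $\Lambda_{T+\alpha}$ and that the gap $|c_k-c|$ between the constant values is controlled; both are resolved by the single fact that $\wt\lambda_k$ fixes the endpoint $R$, which simultaneously makes the gluing continuous and forces $|c_k-c|\leq\eps$. I would deliberately work with $\dist^{T+\alpha}$ rather than $\dist_0^{T+\alpha}$ in order to avoid estimating $\|\lambda_k\|_{T+\alpha}$ across the splice point, where the logarithmic slope functional could otherwise create spurious technical difficulties.
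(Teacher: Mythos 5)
Your proof is correct, but it takes a different route from the paper. The paper disposes of this corollary in one line by invoking Lemma \ref{le:extension} with $g_k=g=0$, $t_k=T$ and $\alpha_k=0$: condition (iii) of that lemma then reads $|Df_k(T)-Df(T)|\leq |Df_k(T-)-Df(T-)|$, which holds trivially because both functions are constant across $T$. You instead give a self-contained argument: extract a time change $\wt\lambda_k\in\Lambda_R$ on $[0,R]$ with $R=T-\alpha/2$, use the fact that $\wt\lambda_k$ fixes the endpoint $R$ both to control $|c_k-c|$ (by evaluating the uniform bound at $t=R$, where both functions have already settled to their constant values) and to glue $\wt\lambda_k$ continuously to the identity on $[R,T+\alpha]$. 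All the steps check out: the glued map is a legitimate element of $\Lambda_{T+\alpha}$, the sup-distance of the time change does not grow, and on $[R,T+\alpha]$ the function discrepancy is exactly $|c_k-c|\leq\eps$; working with $\dist^{T+\alpha}$ rather than $\dist_0^{T+\alpha}$ is legitimate since the two metrics are topologically equivalent. What the paper's route buys is brevity and reuse of machinery already needed for the harder discontinuity case; what your route buys is independence from Lemma \ref{le:extension} and its bookkeeping with the logarithmic slope functional $\|\lambda\|$, and it also cleanly delivers convergence on the full interval $[0,T+\alpha]$, whereas a literal application of the lemma's hypotheses (which ask for constancy on a $2\delta$-window to the left of $T$ but only a $\delta$-window for the approximants) requires a small adjustment of constants that the paper's one-liner glosses over.
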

\begin{proof}
Apply Lemma \ref{le:extension} with $\alpha =\delta$, $g_k=g=0$, $t_k=T$, and $\alpha_k=0$.
\end{proof}

\begin{lemma}
\label{le:sk_unif}

Let $\kpar{\bS^k}$ be a sequence of families of $2n_1+1$ functions, as in \eqref{f10.1}, $\kpar{\eps_k}$ be a sequence of positive numbers, and let $\{\bX^k\}$ be  the corresponding sequence as in \eqref{eq:bX}. Assume that $|D\bS^k(0)|=1$ and
\begin{enumerate}[(i)]
\item $\bX^k(0)$ converge to $\bZ(0)$ and $\{\bZ(t), t\geq 0\}$ is defined as in \eqref{eq:nz}; and
\item $D  \bX^k$ converge to $D \bZ$ in the Skorohod space $\calD[0,R]$ for all $0<R<T$.
\end{enumerate}
Then $\bX^k$ converge uniformly to $\bZ$ in $[0,T]$.
\end{lemma}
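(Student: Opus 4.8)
The plan is to reduce the statement to a continuity property of the integration operator from the Skorohod space into the space of continuous functions with the uniform norm, and then to treat the right endpoint $T$ separately, since hypothesis (ii) only controls the behaviour on $[0,R]$ for $R<T$. Before doing either, I would record two structural facts. Because each $\bX^k$ is piecewise linear and continuous (the discs move at constant velocity between the finitely many collisions, and $\calX^{\cdot,\eps_k}_j$ are linear combinations and rescalings of the disc coordinates), and because $D$ is the right derivative, we have the representation $\bX^k(t) = \bX^k(0) + \int_0^t D\bX^k(s)\,ds$, and likewise $\bZ(t) = \bZ(0) + \int_0^t D\bZ(s)\,ds$. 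Moreover the velocities are uniformly bounded: by \eqref{f10.3} together with $|D\bS^k(0)|=1$ one has $|D\bX^k|\le 3$, while the coordinates of $\bZ$ are Lipschitz with constant $1$ by Remark \ref{re:vz}(ii); hence both families are Lipschitz with a common constant $L$.

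The core step is to prove that, for every fixed $R<T$, $\int_0^\cdot D\bX^k \to \int_0^\cdot D\bZ$ uniformly on $[0,R]$; combined with $\bX^k(0)\to\bZ(0)$ this yields $\bX^k\to\bZ$ uniformly on $[0,R]$. By hypothesis (ii) and the equivalence of $\dist^R$ and $\dist_0^R$, there are time changes $\lambda_k\in\Lambda_R$ with $\|\lambda_k\|_R\to0$ and $\sup_u|D\bX^k(\lambda_k(u)) - D\bZ(u)|\to0$. The bound $\|\lambda_k\|_R\to0$ forces $\lambda_k$ to be bi-Lipschitz, hence absolutely continuous with $e^{-\|\lambda_k\|_R}\le \lambda_k'\le e^{\|\lambda_k\|_R}$, so that $\lambda_k'\to1$ uniformly and $\lambda_k,\lambda_k^{-1}\to\mathrm{id}$ uniformly on $[0,R]$. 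Substituting $s=\lambda_k(u)$ in $\int_0^t D\bX^k(s)\,ds$ and splitting the error gives, for $t\in[0,R]$,
\begin{align*}
\left| \int_0^t D\bX^k - \int_0^t D\bZ \right| &\le \int_0^R |D\bX^k(\lambda_k(u))|\,|\lambda_k'(u) - 1|\,du \\
&\qquad + \int_0^R |D\bX^k(\lambda_k(u)) - D\bZ(u)|\,du + \|D\bZ\|_\infty\,|\lambda_k^{-1}(t) - t|.
\end{align*}
Each term tends to $0$ uniformly in $t$, because the integrands are uniformly bounded and $\lambda_k'\to1$, because the Skorohod error tends to $0$, and because $\lambda_k^{-1}\to\mathrm{id}$ uniformly. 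This is the step that carries the genuine Skorohod content — it is the standard fact that integration is continuous from the $J_1$ topology into the uniform topology — and I expect it to be the main technical obstacle, chiefly in justifying the substitution and the uniform control of $\lambda_k'$.

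Finally I would cross the endpoint using the uniform Lipschitz bound. For any $\eta\in(0,T)$ and any $t\in[T-\eta,T]$, inserting $\bX^k(T-\eta)$ and $\bZ(T-\eta)$ and applying the Lipschitz bound twice gives
\begin{align*}
|\bX^k(t) - \bZ(t)| \le 2L\eta + \sup_{0\le s\le T-\eta} |\bX^k(s) - \bZ(s)|,
\end{align*}
and the same bound trivially holds for $t\in[0,T-\eta]$. Applying the previous step with $R=T-\eta$ makes the supremum tend to $0$, whence $\limsup_k \sup_{[0,T]}|\bX^k - \bZ| \le 2L\eta$. Letting $\eta\downarrow0$ yields uniform convergence on the closed interval $[0,T]$, as required.
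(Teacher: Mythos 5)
Your proof is correct, and it opens with the same reduction as the paper's — the Fundamental Theorem of Calculus bound
\begin{align*}
\sup_{0\leq t\leq T}|\bX^k(t)-\bZ(t)|\leq |\bX^k(0)-\bZ(0)|+\int_0^T|D\bX^k(u)-D\bZ(u)|\,du
\end{align*}
— but from there the two arguments genuinely diverge. The paper takes the soft route: since $D\bZ$ has only finitely many discontinuities, Skorohod convergence on every $[0,R]$ with $R<T$ gives pointwise convergence of $D\bX^k$ to $D\bZ$ at every continuity point in $[0,T)$, hence almost everywhere on $[0,T]$, and the uniform bounds $|D\bX^k|\leq 3$ and $|D\bZ|\leq n+2$ let the Dominated Convergence Theorem finish in one stroke, with no separate treatment of the endpoint $T$. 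You instead make the passage from Skorohod convergence of the derivatives to uniform convergence of the integrals fully explicit: you extract the time changes $\lambda_k$ with $\|\lambda_k\|_R\to 0$, deduce the bi-Lipschitz bounds $e^{-\|\lambda_k\|_R}\leq\lambda_k'\leq e^{\|\lambda_k\|_R}$, substitute in the integral, and control the three error terms; this is a correct (and standard) proof that integration is continuous from the $J_1$ topology into the uniform topology, and your decomposition is sound. Your endpoint patch via the common Lipschitz constant is also correct and is what lets you avoid discussing the behaviour at $T$ itself. What your route buys is that it never invokes almost-everywhere convergence or the nullity of the discontinuity set — only uniform Lipschitz bounds and the metric structure of $\calD$; what it costs is length, since the paper's pointwise-plus-DCT argument is three lines. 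Both are complete proofs of the lemma.
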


\begin{proof}
By the right continuity of each $D \bX^k$ and $D \bZ$, the Fundamental Theorem of Calculus implies that,
\begin{align}\label{f10.2}
\sup_{0 \leq t \leq T} | \bX^k(t) - \bZ(t) |
&\leq | \bX^k(0) - \bZ(0) | + \int_0^T | D \bX^k(u) - D \bZ(u) | \ du.
\end{align}
Since Skorohod convergence implies pointwise convergence at continuity points, assumption $(ii)$ implies that $D \bX^k$ converge to $D \bZ$ almost everywhere in $[0,T]$. 
For all $u\geq 0$, $| D \bX^k(u) | \leq 3$ by \eqref{f10.3} and $| D \bZ(u) |\leq n+2$ by Remark \ref{re:vz} (ii). Hence the lemma follows from \eqref{f10.2}, by the Dominated Convergence Theorem.
\end{proof}

\begin{lemma}
\label{le:gap_neg}
Let  $\bS$ be a family of $2n_1+m$ functions satisfying $(\eps,\rho)$ initial conditions. Then, for all $t\geq 0$, $ i \geq 1$,  we have
\begin{align}
\label{eq:gap_negA}
-2\eps (1+t)^2 \leq \calX^{A,\eps}_{i}(t) - \calX^{A,\eps}_{i+1}(t), \\
\label{eq:gap_negB}
-2\eps (1+t)^2 \leq \calX^{B,\eps}_{i}(t) - \calX^{B,\eps}_{i-1}(t), \\
\label{eq:gap_negC}
-2\eps (1+t)^2 \leq \calX^{C,\eps}_{i}(t) - \calX^{C,\eps}_{i-1}(t).
\end{align}
\end{lemma}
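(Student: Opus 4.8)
The plan is to reduce all three inequalities to a single lower bound on the projected gaps and then to use the fact that the discs cannot overlap. Unwinding the recursions \eqref{eq:XA}--\eqref{eq:XC} gives $\calX^{A,\eps}_i(t)-\calX^{A,\eps}_{i+1}(t)=\eps^{-1}\Gamma^A_{i+1}(\eps t)$, and likewise $\calX^{B,\eps}_i(t)-\calX^{B,\eps}_{i-1}(t)=\eps^{-1}\Gamma^B_i(\eps t)$ and $\calX^{C,\eps}_i(t)-\calX^{C,\eps}_{i-1}(t)=\eps^{-1}\Gamma^C_i(\eps t)$. Writing $s=\eps t$, the claim becomes $\Gamma^A_{i+1}(s)\geq -2(\eps+s)^2$ (and the analogues for $B$, $C$), since $-2(\eps+s)^2=-2\eps^2(1+t)^2$ and dividing by $\eps$ recovers $-2\eps(1+t)^2$. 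I would treat the $A$ case in detail and note that $B$ and $C$ follow verbatim after swapping $(w_0,u_0)$ for $(w_1,u_1)$ and $(w_2,u_2)$.

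For the $A$ case I would fix $i$ and set $d(s)=a_i(s)-a_{i+1}(s)$, $p(s)=w_0\cdot d(s)$ and $q(s)=u_0\cdot d(s)$, so that $\Gamma^A_{i+1}(s)=p(s)-2$ and, since $\{w_0,u_0\}$ is orthonormal and the two discs never overlap, $p(s)^2+q(s)^2=|d(s)|^2\geq 4$. The orthogonal component $q$ is small: Definition \ref{de:ic}(ii) gives $|q(0)|\leq 2\eps$ (for $i=1$ one also invokes part (i), which pins $a_1=b_0=c_0$ within $O(\eps)$ of the origin, forcing $|u_0\cdot a_1(0)|\leq\eps$), and because each disc moves at speed at most $1$ (the normalization $|D\bS|=1$ and conservation of energy) the map $q$ is $2$-Lipschitz, so $|q(s)|\leq 2\eps+2s=2(\eps+s)$. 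The elementary geometric step is that as long as $p(s)\geq 0$ one has $p(s)-2\geq -q(s)^2/2$: this is immediate if $q(s)^2\geq 4$, and otherwise follows from $p(s)\geq\sqrt{4-q(s)^2}$ together with $\sqrt{4-q^2}-2=-q^2/(\sqrt{4-q^2}+2)\geq -q^2/2$.

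The main obstacle is controlling the sign of $p(s)$, i.e. showing the discs keep their order along $w_0$ for as long as the sharp estimate is needed; I expect this to be the crux. My plan is to split on the size of $\eps+s$. When $\eps+s\leq 1$, I would argue $p(s)\geq 0$ by a first-crossing argument: $p(0)=\Gamma^A_{i+1}(0)+2\geq 2>0$ by Definition \ref{de:ic}(iii), and at any first time $s^*\leq s$ with $p(s^*)=0$ the separation would be purely orthogonal, forcing $|q(s^*)|\geq 2$; but $|q(s^*)|\leq 2(\eps+s^*)\leq 2(\eps+s)\leq 2$, which is possible only at $s^*=s$. Hence $p\geq 0$ on all of $[0,s]$, and the geometric step yields $\Gamma^A_{i+1}(s)\geq -q(s)^2/2\geq -2(\eps+s)^2$, which after dividing by $\eps$ is the asserted bound.

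When $\eps+s>1$ the order may be lost, but the target bound is then weak, so the crude $2$-Lipschitz estimate suffices: $p(s)\geq p(0)-2s\geq 2-2s$ gives $\Gamma^A_{i+1}(s)\geq -2s$, hence $\calX^{A,\eps}_i(t)-\calX^{A,\eps}_{i+1}(t)\geq -2t$, and since $\eps(1+t)=\eps+s>1$ we have $2\eps(1+t)^2=2(1+t)\,\eps(1+t)>2(1+t)>2t$, so $-2t>-2\eps(1+t)^2$. The two regimes together give the stated inequality for the $A$ family, and the $B$ and $C$ families are handled identically.
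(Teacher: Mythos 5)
Your proof is correct, and it reaches the bound by a somewhat different mechanism than the paper's, although both arguments start from the same two ingredients: the non-overlap inequality $[w_0\cdot(a_i-a_{i+1})]^2+[u_0\cdot(a_i-a_{i+1})]^2\geq 4$ and the transverse estimate $|u_0\cdot(a_i-a_{i+1})(\eps t)|\leq 2\eps(1+t)$ coming from \eqref{eq:ic_u} (plus Definition \ref{de:ic}(i) for the index $i=1$, exactly as you note) together with the unit speed bound. The paper closes the argument purely algebraically: writing $w_0\cdot(a_i-a_{i+1})(\eps t)=2+\eps G(t)$ with $G=\calX^{A,\eps}_i-\calX^{A,\eps}_{i+1}$, it expands the square and absorbs the quadratic term $\eps^2G(t)^2$ using the a priori Lipschitz bound $|G(t)|\leq 1+2t$ (which follows from $\rho^+_\Gamma(0)\leq\eps$ in \eqref{eq:ic_gap} and the speed bound, since $(1+2t)^2\leq 4(1+t)^2$), leaving a linear inequality in $G$ with no need to track the sign of the longitudinal projection. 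You instead keep $p=w_0\cdot(a_i-a_{i+1})$ explicit, establish $p\geq 0$ by a first-crossing argument in the regime $\eps+s\leq 1$, use the elementary estimate $\sqrt{4-q^2}-2\geq -q^2/2$, and dispose of the regime $\eps+s>1$ with the crude bound $p(s)\geq 2-2s$. Your route costs an extra case split and the sign analysis, but it makes the geometry transparent (the deficit $p-2$ is controlled by half the square of the transverse displacement); the paper's route is shorter because the quadratic term is simply dominated by an error term of the same order that is already present.
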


\begin{proof}
We will prove \eqref{eq:gap_negA}. Completely analogous arguments show the other two inequalities. The distance between centers of balls is at least 2 at any time, thus
\begin{align*}
4 &\leq [w_0\cdot ( a_i(\eps t) - a_{i+1}(\eps t))]^2 + [u_0\cdot( a_i(\eps t) - a_{i+1}(\eps t))]^2 .
\end{align*}
A straightforward computation using the assumption that the speed of any disc is at most one and the initial condition \eqref{eq:ic_u} yield $|u_0 \cdot a_i(\eps t)| \leq \eps(1+t)$ for all $i$. We have  $ w_0 \cdot ( a_i(\eps t) - a_{i+1}(\eps t))  = 2+\eps (\calX^{A,\eps}_i(t) - \calX^{A,\eps}_{i+1}(t)) $. Therefore, for all  $i$,
\begin{align*}
4 &\leq ( 2 + \eps ( \calX^{A,\eps}_i(t) - \calX^{A,\eps}_{i+1}(t) )^2 + 4\eps^2(1+t)^2.
\end{align*}
We use the assumption that the disc speeds are bounded by 1 again, and the initial conditions \eqref{eq:ic_gap}, to arrive at
\begin{align*}
0 &\leq 4 \eps ( \calX^{A,\eps}_i(t) - \calX^{A,\eps}_{i+1}(t) )+ \eps^2(1+2t)^2+ 4\eps^2 ( 1+ t )^2 \\
&\leq  4 \eps ( \calX^{A,\eps}_i(t) - \calX^{A,\eps}_{i+1}(t) )+ 8\eps^2 ( 1+ t )^2,
\end{align*}
from which \eqref{eq:gap_negA}  follows.
\end{proof}

\begin{lemma}
\label{le:zero}
Let $\bS^k$ and $T^*$ be as in the proof of Proposition \ref{pr:s1} .  Suppose that $\calZ^B_j(T^*)=\calZ^B_{j+1}(T^*)$ for some $j\geq 0$. Then there exists $\delta_0>0$ such that for all $\delta \in (0,\delta_0]$, there is $k_0\geq 0$ such that for all $k\geq k_0$, there is exactly one discontinuity of $D  (\calX^{B,\eps_k}_{j+1}-\calX^{B,\eps_k}_{j})$ in $[T^*-\delta,T^*+\delta]$. An analogous statement holds if $\calZ^C_j(T^*)=\calZ^C_{j+1}(T^*)$, or if $\calZ^A_i(T^*)=\calZ^A_{i+1}(T^*)$ for $i\geq 1$.
\end{lemma}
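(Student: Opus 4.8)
The plan is to reduce the statement to a one--dimensional ``gap hitting zero'' picture and to show that, for large $k$, the only collision near $T^*$ is the one between the two discs whose limiting trajectories meet there. I will treat the case $\calZ^B_j(T^*)=\calZ^B_{j+1}(T^*)$; the cases for $C$ and for $A$ (with $i\ge1$) are identical after replacing the relevant inequality of Lemma~\ref{le:gap_neg} and swapping the index ordering. Set $G(t)=\calZ^B_{j+1}(t)-\calZ^B_j(t)$ and $G_k(t)=\calX^{B,\eps_k}_{j+1}(t)-\calX^{B,\eps_k}_j(t)$, so that $0\ge$ (at collisions) and $G_k(t)\ge-2\eps_k(1+t)^2$ by \eqref{eq:gap_negB}, while $D G_k\to D G$ in $\calD[0,R]$ for every $R<T^*$ and, by the Fundamental Theorem of Calculus together with the Dominated Convergence argument of Lemma~\ref{le:sk_unif} (whose bounds $|D\bX^{\eps_k}|\le3$ and $|D\bZ|\le n+2$ hold for any $m$), $G_k\to G$ uniformly on $[0,T^*]$. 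By Remark~\ref{re:vz}(vii) exactly two of the functions $\calZ^B$ meet at $T^*$, the crossing is transversal, and the one--sided derivatives $D G(T^*-)<0<D G(T^*+)$ are fixed nonzero constants determined by the $\calV^B_j$. I would then fix $\delta_0>0$ so small that $T^*$ is the only discontinuity time of $D\bZ$ in $[T^*-2\delta_0,T^*+2\delta_0]$; consequently every adjacent gap other than $\calZ^B_{j+1}-\calZ^B_j$ (for $j=0$ this includes the $A_1$--$C_1$ gap $\calZ^C_1-\calZ^C_0$, whose collisions occur at times $t^C_i\ne T^*$) stays $\ge2\beta$ there, for some $\beta>0$. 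The essential difficulty is that convergence is available only for $t\le T^*$, so the behaviour on $(T^*,T^*+\delta]$ must be controlled purely through \eqref{eq:gap_negB} and the speed bound $|D\calX^{B,\eps_k}_i|\le1$.

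The first substantive step is isolation. Fix $\delta\in(0,\delta_0]$. By uniform convergence on $[0,T^*]$ the gaps other than $G_k$ exceed $\beta$ at time $T^*-\delta$ for large $k$, and since each such gap has slope in $[-2,2]$ it changes by at most $4\delta\le4\delta_0$ over $[T^*-\delta,T^*+\delta]$; shrinking $\delta_0$ so that $4\delta_0<\beta$ keeps them positive, and since $|v|\ge w_1\cdot v$ this forces the corresponding disc centers to stay at distance $>2$. Combined with the collapse argument of Step~2 in the proof of Proposition~\ref{pr:s1} (which uses the $(\eps_k,\rho)$ conditions and $\eps_k\to0$ to show that the only discs ever within distance $2\sqrt2$ of $B_j$ or $B_{j+1}$ are their line neighbours), this shows that for large $k$ the only collision $B_j$ or $B_{j+1}$ can undergo in the real--time window $[\eps_k(T^*-\delta),\eps_k(T^*+\delta)]$ is with each other. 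Hence every discontinuity of $D G_k$ in $[T^*-\delta,T^*+\delta]$ is a $B_j$--$B_{j+1}$ collision; at such a time $0\ge G_k\ge-2\eps_k(1+t)^2$ (the centers are nearly aligned along $w_1$, so the projected gap attains its admissible minimum), and the elastic rule \eqref{oc2.3}--\eqref{oc4.2} reverses the relative velocity along the line of centers, so $D G_k$ jumps from a value $\approx D G(T^*-)<0$ to $\approx -D G(T^*-)>0$.

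It remains to count. Entering the window, $G_k(T^*-\delta)\to G(T^*-\delta)=-D G(T^*-)\,\delta=:\gamma_->0$ and $D G_k(T^*-\delta)\to D G(T^*-)$. If no collision occurred in $[T^*-\delta,T^*+\delta]$ then $G_k$ would be affine there, so $G_k(T^*+\delta)\to\gamma_-+2\delta\,D G(T^*-)=-\gamma_-<0$, which for large $k$ contradicts $G_k(T^*+\delta)\ge-2\eps_k(1+T^*+\delta)^2$; thus at least one collision occurs. For uniqueness, after the first collision $D G_k$ is a fixed positive constant ($\approx-D G(T^*-)$) until the next discontinuity, so $G_k$ increases strictly away from its near--zero minimum; a second collision would require $G_k$ to return to that admissible minimum $\approx0$, impossible while $G_k$ is increasing and $D G_k$ is unchanged. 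Hence there is exactly one discontinuity of $D(\calX^{B,\eps_k}_{j+1}-\calX^{B,\eps_k}_j)$ in $[T^*-\delta,T^*+\delta]$, the asserted unique time $T_k$.

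The same three steps apply verbatim to $\calZ^C_j,\calZ^C_{j+1}$ and to $\calZ^A_i,\calZ^A_{i+1}$ with $i\ge1$, using \eqref{eq:gap_negC} and \eqref{eq:gap_negA} respectively and, in the $A$ case, the reversed index order $\calX^{A,\eps_k}_i-\calX^{A,\eps_k}_{i+1}$; the $j=0$ (equivalently $B_0=A_1$) sub--case is covered because the $A_1$--$B_1$ and $A_1$--$C_1$ collision times $t^B_i,t^C_i$ are all distinct, so only one of them sits in a small window about $T^*$. I expect the isolation step to be the main obstacle, precisely because it must be justified on the part $(T^*,T^*+\delta]$ of the window where Skorohod convergence is not yet available and one can rely only on the gap lower bound of Lemma~\ref{le:gap_neg} and the uniform speed bound.
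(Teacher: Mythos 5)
Your proposal is correct and follows essentially the same route as the paper's proof: existence of a collision via the lower bound of Lemma~\ref{le:gap_neg} applied to an affine $G_k$ contradicting $D(\calZ^B_{j+1}-\calZ^B_j)(T^*-)<0$, and uniqueness via isolation (no simultaneous collisions in $\bZ$, uniform convergence on $[0,T^*]$, and the unit speed bound to push separation across $(T^*,T^*+\delta]$). The only cosmetic difference is that you phrase uniqueness through the sign reversal of $DG_k$ after the collision, whereas the paper notes that a second collision would require one of the two discs to hit a third disc first; these are the same observation.
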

\begin{proof}
The assumption that $\calZ^B_j(T^*)=\calZ^B_{j+1}(T^*)$ implies that there is a discontinuity of both $D  \calZ^B_j$ and $D  \calZ^B_{j+1}$ at $T^*$.
There are finitely many of these discontinuities, so there is $\delta_1>0$ such that $D (\calZ^B_{j+1}-\calZ^B_{j})$ is constant in $(T^*-\delta_1,T^*)$, and so $\calZ^B_{j+1} > \calZ^B_{j}$ in this interval. Since  $\calZ^B_j(T^*)=\calZ^B_{j+1}(T^*)$, it follows that $D  (\calZ^B_{j+1} - \calZ^B_{j})(T^*-) <0$.

Fix $\delta\in(0,\delta_1)$. If for infinitely many $k$ we have that  $D  ( \calX^{B,\eps_k}_{j+1} - \calX^{B,\eps_k}_{j})$ does not have a discontinuity in $[T^*-\delta,T^*+\delta]$, then $D  (\calX^{B,\eps_k}_{j+1}-\calX^{B,\eps_k}_{j})$ is constant in $(T^*-\delta,T^*+\delta)$ for such values of $k$. 
We will assume without loss of generality that this claim holds for all $k$,  because otherwise we can pass to a subsequence. Fix $0< r <\delta$ and $t \in (T^*,T^*+\delta)$. It follows from Lemma \ref{le:gap_neg} that,
\begin{align}\label{f11.1}
-2\eps_k(1+t)^2 &\leq \calX^{B,\eps_k}_{j+1}(t)-\calX^{B,\eps_k}_{j}(t) \\
&= (\calX^{B,\eps_k}_{j+1}-\calX^{B,\eps_k}_{j})(T^*-r) + D  (\calX^{B,\eps_k}_{j+1}-\calX^{B,\eps_k}_{j}) (T^*-r) \cdot (t-T^*+r).\notag
\end{align}
Since $D  (\calZ^B_{j+1}-\calZ^B_{j})$ is constant in $(T^*-\delta_0,T^*)$,  it is continuous at $T^*-r$, and, therefore, $D  (\calX^{B,\eps_k}_{j+1}-\calX^{B,\eps_k}_{j})(T^*-r)$ converge to $D  (\calZ^B_{j+1}-\calZ^B_{j})(T^*-r) = D  (\calZ^B_{j+1}-\calZ^B_{j})(T^*-)$ because Skorokhod convergence implies pointwise convergence at continuity points. Taking $k$ to infinity in \eqref{f11.1} and using Lemma \ref{le:sk_unif}, we obtain
\begin{align*}
0 \leq (\calZ^B_{j+1}-\calZ^B_{j})(T^*-r) + D (\calZ^B_{j+1}-\calZ^B_{j})(T^*-) \cdot (t-T^*-r).
\end{align*}
Taking $r$ to zero yields $0\leq D (\calZ^B_{j+1}-\calZ^B_{j})(T^*-)$, a contradiction. This shows that there is $k_0\geq 0$ such that for all $k\geq k_0$ there is at least one discontinuity of $D  (\calX^{B,\eps_k}_{j+1}-\calX^{B,\eps_k}_{j})$ in $[T^*-\delta,T^*+\delta]$, that is, $B_j$ or $B_{j+1}$ participate in a collision on this interval.

Recall from
Remark \ref{re:vz} (vii) that there are no ``simultaneous collisions'' among components of $\bZ$. It follows that there is $\alpha >0$ such that $|\calZ^B_j(T^*) - \calZ^D_i(T^*)| >4\alpha$ for every function of the form $\calZ^D_i$, $D=A,B,C$, $i\geq 0$,  except for  $\calZ^B_{j+1}$. It follows from Lemma \ref{le:sk_unif} that  for $k$ large enough, we have that $| \calX^{B,\eps_k}_j(t) - \calX^{D,\eps_k}_i(t) | \geq 3\alpha$ for all $t \in [T^*-\alpha,T^*]$, $D=A,B,C$, and $i\geq 0$,  except for  $\calZ^B_{j+1}$. 
Since the speeds of $ \calX^{B,\eps_k}_j(t) $ and $ \calX^{D,\eps_k}_i(t) $ are bounded by 1, we obtain the bound $| \calX^{B,\eps_k}_j(t) - \calX^{D,\eps_k}_i(t) | \geq \alpha$ for all $t \in [T^*-\alpha,T^*+\alpha]$.
A similar reasoning shows that $B_{j+1}$ can only collide with $B_j$ in this interval. The discs $B_{j+1}$ and $B_j$ can collide only once in $ [T^*-\alpha,T^*+\alpha]$ because in order to collide twice, one of them would have to hit some other disc. We conclude that the lemma holds with $\delta_0 = \min\kpar{\delta_1,\alpha}/2$.

It is easy to see that the same argument applies to discs in families $A$ and $C$.
\end{proof}

\begin{lemma}
\label{le:velpm}
Let $\bS$ be a family of functions satisfying $(\eps,\rho)$ initial conditions at $t=0$. For $j\geq 0$, if there is a collision between discs $B_j$ and $B_{j+1}$ at time $\eps t$ then, 
\begin{align}
\label{eq:velpm01}
D  \calX^{B,\eps}_j (t) &= D  \calX^{B,\eps}_{j+1} (t-) + O(\eps(1+t)), \\
\label{eq:velpm03}
D  \calX^{B,\eps}_{j+1} (t) &= D  \calX^{B,\eps}_{j} (t-) + O(\eps(1+t)).
\end{align}
Analogous estimates hold if the collision is between $A_j$ and $A_{j+1}$, or $C_j$ and $C_{j+1}$. 

In case $j=0$, we have
\begin{align}
\label{eq:velpm05}
D  \calX^{C,\eps}_0 (t)  &= D  \calX^{C,\eps}_0 (t-) - \frac12 D  \calX^{B,\eps}_1(t-) +  \frac12 D  \calX^{B,\eps}_0(t-) + O(\eps(1+t)).
\end{align}
\end{lemma}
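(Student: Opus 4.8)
The plan is to reduce the lemma to the elastic collision laws \eqref{oc2.3}--\eqref{oc4.2} together with a single geometric observation: because every disc stays within distance $O(\eps(1+t))$ of its designated line, the line of impact at any collision is nearly parallel to the corresponding direction $w_k$. Consider the $B$--$B$ case first and write $\bx = b_j(\eps t) - b_{j+1}(\eps t)$ for the vector joining the two centers at the collision time $\eps t$, so $|\bx| = 2$. First I would control the transverse displacement. The initial conditions of Definition \ref{de:ic}, in particular \eqref{eq:ic_u}, bound $|P_{u_1} b_i(0)| \le \eps$ for the relevant centers (and condition (i) gives the same for $a_1 = b_0 = c_0$), while the standing assumption that disc speeds are at most $1$ (as used in Lemma \ref{le:gap_neg}) lets each center move by at most $\eps t$ on $[0,\eps t]$. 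Hence $|u_1\cdot b_i(\eps t)| \le \eps(1+t)$ and therefore $|u_1\cdot\bx| \le 2\eps(1+t)$.

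Next I would pass from $P_{\bx}$ to $P_{w_1}$. Since $\{w_1,u_1\}$ is orthonormal, writing $\hat\bx = \bx/2 = \sigma w_1 + \eta u_1$ with $\sigma^2+\eta^2=1$ and $|\eta| = |u_1\cdot\hat\bx| \le \eps(1+t)$ gives $\sigma = \pm1 + O(\eps^2(1+t)^2)$. A key simplification is that the projection $P_w$ depends only on the line $\R w$ and not on the orientation or length of $w$ (indeed $P_{-w}=P_w$), so I never need to resolve the sign of $\sigma$, i.e., the order of the discs along $L_1$ is irrelevant. Expanding $P_{\hat\bx}(v) = (v\cdot\hat\bx)\hat\bx$ against $P_{w_1}(v)=(v\cdot w_1)w_1$, every discrepancy term carries a factor $\eta$ or $\sigma^2-1=-\eta^2$, so
\begin{align*}
P_{\bx}(v) = P_{w_1}(v) + O\rpar{\eps(1+t)\,|v|}\qquad\text{for every } v.
\end{align*}
For the relative velocities below $|v|\le2$, so the error is $O(\eps(1+t))$.

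Then I would insert this into the collision law. Applying \eqref{oc2.3} to the pair $B_j,B_{j+1}$, taking the inner product with $w_1$, and using $|w_1|=1$ together with $D\calX^{B,\eps}_i(t)=w_1\cdot Db_i(\eps t)$ from \eqref{eq:dB}, I obtain
\begin{align*}
D\calX^{B,\eps}_j(t) &= D\calX^{B,\eps}_j(t-) - w_1\cdot P_{\bx}\rpar{D(b_j-b_{j+1})(\eps t-)}\\
&= D\calX^{B,\eps}_j(t-) - \rpar{D\calX^{B,\eps}_j(t-) - D\calX^{B,\eps}_{j+1}(t-)} + O(\eps(1+t))\\
&= D\calX^{B,\eps}_{j+1}(t-) + O(\eps(1+t)),
\end{align*}
which is \eqref{eq:velpm01}; \eqref{eq:velpm03} follows symmetrically from \eqref{oc4.2}. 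The $A$--$A$ and $C$--$C$ cases are identical after replacing $(w_1,u_1)$ by $(w_0,u_0)$ and $(w_2,u_2)$, using \eqref{eq:dA} and \eqref{eq:dC}.

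For the case $j=0$, recall $b_0=c_0=a_1$, so the collision is between $A_1$ and $B_1$ and the impact direction is again $\pm w_1 + O(\eps(1+t))$. Here I would instead track $D\calX^{C,\eps}_0(t)=w_2\cdot Da_1(\eps t)$: projecting the update of $Da_1$ from \eqref{oc2.3} onto $w_2$ and using $w_2\cdot P_{w_1}(v) = (w_1\cdot w_2)(v\cdot w_1) = -\tfrac12\,v\cdot w_1$ (since $w_1\cdot w_2 = -1/2$), together with $v\cdot w_1 = D\calX^{B,\eps}_0(t-)-D\calX^{B,\eps}_1(t-)$, yields \eqref{eq:velpm05} directly. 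The only genuinely geometric input is the transverse estimate of the first paragraph; everything after it is bookkeeping of $O(\eps(1+t))$ error terms through the projection formula. I expect that error-tracking to be the most delicate—though not conceptually hard—part, and the cleanest device for avoiding subtleties is the remark that $P_w$ is insensitive to the sign of $w$.
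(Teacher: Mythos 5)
Your proposal is correct and follows essentially the same route as the paper's proof: bound the transverse ($u_1$-) component of the center difference by $O(\eps(1+t))$ using \eqref{eq:ic_u} and the unit speed bound, conclude that the impact direction agrees with $\pm w_1$ up to $O(\eps(1+t))$, and substitute into the collision law \eqref{oc2.3}, projecting onto $w_1$ (resp.\ $w_2$ with $w_1\cdot w_2=-1/2$ for \eqref{eq:velpm05}). The only cosmetic differences are that you package the key estimate as an operator bound $P_{\bx}=P_{w_1}+O(\eps(1+t))$ where the paper expands the vector $(b_j-b_{j+1})(\eps t)=-2w_1+O(\eps(1+t))$ directly, and that you get \eqref{eq:velpm03} by symmetry from \eqref{oc4.2} where the paper invokes conservation of momentum.
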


\begin{remark}\label{f12.1}
If we set  $B_0=C_0=A_0$,  $b_0(t)=a_1(t)$, and $c_0(t)=a_1(t)$ then
 Lemma \ref{le:velpm} also covers collisions between $A_1$ and $B_1$, and collisions between $A_1$ and $C_1$.
\end{remark}

\begin{proof}[Proof of Lemma \ref{le:velpm}]
We will prove \eqref{eq:velpm01}-\eqref{eq:velpm03} only for the family $B$. The proof is completely analogous for the families $A$ and $C$.

Using \eqref{oc2.3} at time $\eps t$, we obtain
\begin{align}\label{f11.4}
D b_j(\eps t) &= D b_j(\eps t-) - 
\frac 1 4 \spar{ D (b_j - b_{j+1})(\eps t-)\cdot (b_j-b_{j+1})(\eps t)} 
(b_j-b_{j+1})(\eps t).
\end{align}
It follows from \eqref{eq:ic_u}  and the fact that the speed of each disc is at most 1 at any time that
\begin{align}\label{f11.2}
| u_1\cdot (b_j-b_{j+1})(\eps t) | &\leq |u_1\cdot (b_j-b_{j+1})(0)| + \int_0^{\eps t} | u_1\cdot D  (b_j-b_{j+1})(u) |\ du \\
&\leq 2\eps + 2\eps t = O(\eps(1+t)). \notag
\end{align}
Since  $|D  ( \calX^{B,\eps}_{j+1} - \calX^{B,\eps}_{j} )(t)|\leq 2$ at all times, we have $| ( \calX^{B,\eps}_{j+1} - \calX^{B,\eps}_{j} ) (t)| \leq 1 + 2t$.
This and \eqref{f11.2} imply that
\begin{align}\label{f11.3}
(b_j-b_{j+1})(\eps t) &= P_{w_1}(b_j-b_{j+1})(\eps t) + P_{u_1}(b_j-b_{j+1})(\eps t) \\
&= -\rpar{2+\eps ( \calX^{B,\eps}_{j+1} - \calX^{B,\eps}_{j} )(t)}w_1 + O(\eps(1+t))\notag \\
&= -2w_1 + O(\eps(1+t)).\notag
\end{align}
Recall that $D \calX^{B,\eps}_j(t) = w_1\cdot D  b_j(\eps t)$ for $j\geq 1$.
This implies that  
$D (b_j - b_{j+1})(\eps t-)\cdot w_1
=- D  ( \calX^{B,\eps}_{j+1} - \calX^{B,\eps}_{j} )(t-) $.
We combine this observation and \eqref{f11.3} to obtain
\begin{align*}
D (b_j - b_{j+1})(\eps t-)\cdot (b_j-b_{j+1})(\eps t ) 
&= 2D  ( \calX^{B,\eps}_{j+1} - \calX^{B,\eps}_{j} )(t-) + O(\eps(1+t)).
\end{align*}
This, \eqref{f11.4} and \eqref{f11.3} yield
\begin{align}
\label{d22.06}
D b_j(\eps t) 
&= D b_j(\eps t-) + w_1 D ( \calX^{B,\eps}_{j+1} - \calX^{B,\eps}_{j} )(t-) + O(\eps(1+t)) .
\end{align}
Next we apply  the scalar product with $w_1$, to see that
\begin{align}\label{f11.5}
D \calX^{B,\eps}_j(t) &=
D b_j(\eps t) \cdot w_1\\
&= D b_j(\eps t-)\cdot w_1 +w_1\cdot w_1 D ( \calX^{B,\eps}_{j+1} - \calX^{B,\eps}_{j} )(t-) + O(\eps(1+t))\cdot w_1\notag \\
& = D \calX^{B,\eps}_j(t-) + D ( \calX^{B,\eps}_{j+1} - \calX^{B,\eps}_{j} )(t-) + O(\eps(1+t))\notag\\
& =  D  \calX^{B,\eps}_{j+1}(t-) + O(\eps(1+t)),\notag
\end{align}
which is \eqref{eq:velpm01}. The estimate \eqref{eq:velpm03} follows from \eqref{eq:velpm01} and conservation of momentum at the collision time.

Set $j=0$ in \eqref{d22.06}, recall Remark \ref{f12.1} and apply conservation of momentum at the collision time to deduce that
\begin{align*}
D  a_1(\eps t) = D  a_1(\eps t-)  + w_1 D  ( \calX^{B,\eps}_{1} - \calX^{B,\eps}_{0} ) (t-) + O(\eps(1+t)).
\end{align*}
Taking the scalar product with $w_2$ and computing as in \eqref{f11.5}, we obtain \eqref{eq:velpm05}.
\end{proof}

\section{Acknowledgments}
We are grateful to Jayadev Athreya and Jaime San Martin  for very helpful advice.
We thank the referee for the suggestions for improved presentation of our results.

\bibliographystyle{alpha}
\bibliography{hardlow}

\end{document}